\def\sourcecode{}
\def\solution{}
\def\reviseone{}
\def\mathbold#1{\ifdefined\stix@dotlessi\mathbb{#1}\else\text{\boldmath{$#1$}}\fi}
\title{Topological Complexity of $S^3/Q_8$ \\ as \\ fibrewise L-S category}
\date{\today}
\keywords{topological complexity, space form, quaternion group, python program}
\subjclass[2020] {Primary: 55M30; Secondary: 55R70, 55M35, 55P35, 57T30}
\author[Iwase]{Norio Iwase}
\address[Iwase]{Faculty of Mathematics, Kyushu University}
\email[Iwase]{iwase@math.kyushu-u.ac.jp}
\author[Miyata]{Yuya Miyata}
\address[Miyata]{Graduate School of Mathematics, Kyushu University}
\email[Miyata]{katorisenkouhanabi@gmail.com}
\theoremstyle{theorem}
\newtheorem{thm}{{\textbf{Theorem}}}
\newtheorem{lem}[thm]{{\textbf{Lemma}}}
\newtheorem{prop}[thm]{{\textbf{Proposition}}}
\newtheorem{fact}[thm]{{\textbf{Fact}}}
\theoremstyle{definition}
\newtheorem{definition}[thm]{{\textbf{Definition}}}
\newtheorem{notation}[thm]{{\textbf{Notation}}}
\theoremstyle{remark}
\newtheorem{remark}[thm]{{\textbf{Remark}}}
\numberwithin{equation}{section}
\numberwithin{thm}{section}
\newenvironment{enumerate*}[1]{\vspace{.5ex}
\begin{inparaenum}[\hspace{.4em}\begin{minipage}{1.8em}\hfill #1\end{minipage}]
}{\end{inparaenum}}
\newenvironment{subenumerate*}{%
 \let\par\relax\advance\@enumdepth\@ne%
 \setcounter{enumii}{1}%
\begin{inparaenum}[\hspace{.4em}\begin{minipage}{1.8em}\hfill i)\end{minipage}]
}{\setcounter{enumii}{0}\end{inparaenum}}%
\def\vitem{\endgraf\vskip1ex\noindent\item}
\def\hitem{\hskip.0em\item}
\def\textem#1{{\color{red}#1}}
\def\reduced#1{\bar{#1}}
\def\integral{\mathbb{Z}}
\def\quaternion{\mathbb{H}}
\def\field{\mathbb{F}}
\def\cupp{\text{\scriptsize$\,\bf\smile\,$}}
\renewcommand{\hom}{\operatorname{Hom}}
\def\emptyarg{}
\def\@wgt#1{\def\thisarg{#1}\ifx\thisarg\emptyarg\operatorname{wgt}\else\operatorname{wgt}\hspace{.06em}(#1)\fi}
\def\@@wgt[#1]#2{\def\thisarg{#2}\ifx\thisarg\emptyarg\operatorname{wgt}_{#1}\else\operatorname{wgt}_{#1}(#2)\fi}
\def\wgt{\@ifnextchar[{\@@wgt}{\@wgt}}
\def\@Mwgt#1{\def\thisarg{#1}\ifx\thisarg\emptyarg\operatorname{Mwgt}\else\operatorname{Mwgt}\hspace{.06em}(#1)\fi}
\def\@@Mwgt[#1]#2{\def\thisarg{#2}\ifx\thisarg\emptyarg\operatorname{Mwgt}_{#1}\else\operatorname{Mwgt}_{#1}(#2)\fi}
\def\Mwgt{\@ifnextchar[{\@@Mwgt}{\@Mwgt}}
\def\@cupl#1{\def\thisarg{#1}\ifx\thisarg\emptyarg\operatorname{cup}\else\operatorname{cup}\hspace{.06em}(#1)\fi}
\def\@@cupl[#1]#2{\def\thisarg{#2}\ifx\thisarg\emptyarg\operatorname{cup}_{#1}\else\operatorname{cup}_{#1}(#2)\fi}
\def\cupl{\@ifnextchar[{\@@cupl}{\@cupl}}
\def\@catBb#1{\def\thisarg{#1}\ifx\thisarg\emptyarg\operatorname{cat^{*}_{B}}\else\operatorname{cat^{*}_{B}}\hspace{.06em}(#1)\fi}
\def\@@catBb(#1){\operatorname{cat^{*}_{B}}\hspace{.06em}(#1)}
\def\catBb{\@ifnextchar({\@@catBb}{\@catBb}}
\def\@cat#1{\def\thisarg{#1}\ifx\thisarg\emptyarg\operatorname{cat}\else\operatorname{cat}\hspace{.06em}(#1)\fi}
\def\@@cat[#1]#2{\def\thisarg{#2}\ifx\thisarg\emptyarg\operatorname{cat}_{#1}\else\operatorname{cat}_{#1}(#2)\fi}
\def\cat{\@ifnextchar[{\@@cat}{\@cat}}
\def\@tc#1{\def\thisarg{#1}\ifx\thisarg\emptyarg\operatorname{tc}\else\operatorname{tc}\hspace{.06em}(#1)\fi}
\def\@@tc{\operatorname{tc}}
\def\tc{\@ifnextchar({\@@tc}{\@tc}}
\def\@dim#1{\def\thisarg{#1}\ifx\thisarg\emptyarg\operatorname{dim}\else\operatorname{dim}\hspace{.06em}(#1)\fi}
\def\@@dim[#1]#2{\def\thisarg{#1}\ifx\thisarg\emptyarg\operatorname{dim}_{#1}\else\operatorname{dim}_{#1}(#2)\fi}
\def\dim{\@ifnextchar[{\@@dim}{\@dim}}
\def\@tcm#1{\def\thisarg{#1}\ifx\thisarg\emptyarg\operatorname{tc^{M}}\else\operatorname{tc^{M}}(#1)\fi}
\def\@@tcm{\operatorname{tc^{M}}}
\def\tcm{\@ifnextchar({\@@tcm}{\@tcm}}
\def\@TC#1{\def\thisarg{#1}\ifx\thisarg\emptyarg\operatorname{TC}\else\operatorname{TC}\hspace{.06em}(#1)\fi}
\def\@@TC{\operatorname{TC}}
\def\TC{\@ifnextchar({\@@TC}{\@TC}}
\def\@ad#1{\def\thisarg{#1}\ifx\thisarg\emptyarg\operatorname{ad}\else\operatorname{ad}\hspace{.06em}(#1)\fi}
\def\@@ad{\operatorname{ad}}
\def\ad{\@ifnextchar({\@@ad}{\@ad}}
\def\category#1{\mathcal{#1}}
\def\TopBB{\mathcal{NG}_{B}^{B}}
\def\Loop{\operatorname{\Omega}}
\def\LoopB#1{\operatorname{\Loop_{B}}(#1)}
\def\wgtB#1{\operatorname{wgt}_{B}(#1)}
\def\wgtBB#1{\operatorname{wgt}_{B}^{B}(#1)}
\def\catBB#1{\operatorname{cat}_{B}^{B}(#1)}
\def\cuplB#1{\operatorname{cup}_{B}(#1)}
\def\double#1{\operatorname{\it d}(#1)}
\def\id{\operatorname{id}}
\def\proj{\operatorname{pr}}
\def\map#1{\operatorname{Map}(#1)}
\def\midvert{ \ \mathstrut\vrule\hskip.1pt\ }
\def\Min{\operatorname{Min}}
\def\Max{\operatorname{Max}}
\def\Img{\operatorname{Im}}
\def\product{\textstyle\Pi}
\def\fatvee{\large\text{T}}
\def\homeo{\approx}
\def\comp{\hskip1pt\text{\footnotesize$\circ$}\hskip1pt}
\def\claimname{Claim}
\def\preclaimword{\hskip1.68em}
\def\@postclaim[#1]{\bf #1}
\def\@Claim[#1]{\preclaimword {\bf #1.} \@ifnextchar[{\@postclaim}{}}
\newenvironment{Claim}%
{\par\vskip1.5ex\noindent\@ifnextchar[{\@Claim}{\bf\claimname~}}%
{\unskip\nobreak\hfill\par\vskip1.5ex}
\begin{document}
\begin{abstract}
In 2010, M.~Sakai and the first author showed that the topological complexity of a space $X$ coincides with the fibrewise unpointed L-S category of a pointed fibrewise space $\proj_{1} : X \times X \to X$ with the diagonal map $\Delta : X \to X \times X$ as its section.
In this paper, we describe our algorithm how to determine the fibrewise L-S category or the Topological Complexity of a topological spherical space form. Especially, for $S^3/Q_8$ where $Q_8$ is the quaternion group, we write a python code to realise the algorithm to determine its Topological Complexity.
\end{abstract}

\maketitle\baselineskip18pt

\section{Intoroduction}
Topological complexity was introduced in \cite{MR1957228} by Michael Farber as a numerical homotopy invariant.
It attracts many authors including people working on similar homotopy invariant of Lusternik-Schnirelmann category, L-S category, for short.
Recently, many authors started to use a `normalized' or a `reduced' version of it including Farber himself.
In this paper, we use the symbol `$\tc{}$' for the reduced version of it to distinguish from the original denoted by $\TC{}$.

The reduced version of topological complexity is defined as follows:
let $X$ be a path-connected space, $\path{X}=\map{[0,1],X}$ and $\varpi : \path{X} \to M \times M$ the projection given by $\varpi(u) = (u(0),u(1))$.
Using a projection $p_{t} : \path{X} \to X$ defined by $p_{t}(u)=u(t)$, $t \in \{0,1\}$, we may write $\varpi=p_{0} \times p_{1}$.
Topological complexity of $X$, denoted by $\tc{X}$, is the least integer $n$ such that there is an open covering $U_0, \dots, U_n$ of $M \times M$ on each of which $\varpi$ admits a section.
We remark that, if a subset $A \subset X$ is contractible in $X$, there is a section of $\varpi$ on $A$.
The definition reminds us a well-known homotopy invariant: L-S category of a space $X$, denoted by $\cat{X}$, is the least integer $n$ such that there is an open covering $U_0, \dots,U_n$ of $X$ each of which is contractible in $X$.

We can verify the following relation among these invariants (see Farber \cite{MR1957228}).
\[ \cat{X} \le \tc{X} \le \cat{X \times X} \le 2\cat{X}.\]
More practically for an abelian group $R$, Farber introduced the zero-divisors ideal $I_{\varpi}(X;R)=\ker \Delta^{*} : H^{*}(X \times X;R) \to H^{*}(X;R)$ and the zero-divisors cup-length $\cupl[\varpi]{X;R}$ for a space $X$ \cite{MR1957228}, and then the TC-weight $\wgt[\varpi]{z;R}$ for $z \in I_{\varpi}(X;R)$ with Mark Grant \cite{MR2407101}:
$$
\cupl[\varpi]{X;R} \le \Max\{\wgt[\varpi]{z;R} \mid z \in I_{\varpi}(X;R) \smallsetminus \{0\}\} \le \tc{X}.
$$
In this paper, we adopt fibrewise method and skip the precise definition of the above notions.

Since the theory of topological complexity is growing and spreading rapidly, there are many open problems.
Among such problems, we are especially interested in the topological complexity of spherical space forms, and in particular, real projective spaces, aiming to give a natural and computational way to determine topological complexity, since Farber showed that the topological complexity of a real projective space coincides with its immersion dimension.

In this paper, we focus on the work of Kenso Fujii on the $K$-theory of a spherical space forms obtained as the orbit space of the unit sphere of $\quaternion^{t}$, where $t$ is a non-negative integer and $\quaternion$ is the set of all quaternionic numbers, by the diagonal action of the subgroup $Q_8$ of $\mathrm{Sp}(1)$, represented as $Q_{8}=\langle a, b \midvert a^{4} \!= b^4 \!= aba\bar{b} = 1, \,b^2 \!= a^{2} \rangle$, where $\bar{g}$ stands for $g^{-1}$ for any element $g \!\in\! Q_{8}$.

Our results are stated in \S \ref{sect:results} and the proofs are in \S \ref{sect:mainprop}-\ref{sect:mainlem}, but the actual calculation is done with computer.
Because of the size of the computer resources required, we gave up to use the usual method for topological complexity but the fibrewise L-S method by Iwase-Sakai \cite{MR2556074} and Iwase-Sakai-Tsutaya \cite{MR3975098}.
It results that we don't need to calculate the bar resolution of $G \times G$ but $G$ fibrewise.
This significantly reduces the computer resources required, while the explicit answer to our equation is still too long to print out.
\ifdefined\sourcecode
So we include the algorithm and python code in Appendix, instead.
\else
So we include the algorithm of the program in Appendix, instead.
\fi
We hope our method can be applied to more general cases. 

We work in the convenient category $\category{NG}$, introduced by Shimakawa-Yoshida-Haraguchi \cite{MR3884529}, and every chain groups are assumed to be $\field_{2}$-modules in this paper, unless otherwise stated.

\section{Results}\label{sect:results} 	

In this section, $R$ is assumed to be an $\field_{2}$-module.
Let us recall the James fibrewise theory.

\begin{definition}[\cite{MR1361889}]
Fibrewise spaces and maps are defined as follows.
\begin{enumerate}
\item 
A fibrewise space is a tuple $(E,p,X)$ consisting of spaces $E$ and $X$ with a map $p : E \to X$ called a projection.
A fibrewise map from a fibrewise space $(F,q,Y)$ to a fibrewise space $(E,p,X)$ is a pair $(\phi,f)$ of maps $\phi : F \to E$ and $f : Y \to X$ satisfying $p \comp \phi = f \comp q$ as $(\phi,f) : (F,q,Y) \to (E,p,X)$.
$(E,p,X)$ is often denoted simply by $E$, and $(\phi,f)$ by $\phi$.
\item 
A fibrewise pointed space is a pair $(E,s)$ of a fibrewise space $E=(E,p,X)$ and a section $s : X \to E$ of $p$, i.e, $p \comp s = \id_X$.
A fibrewise pointed map from a fibrewise pointed space $(F,r)$ to a fibrewise pointed space $(E,s)$ is a fibrewise map $(\phi,f) : (F,r) \to (E,s)$ satisfying $\phi \comp r = s \comp f$. 
$(E,s)$ is often denoted simply by $E$, and $(\phi,f)$ by $\phi$.
\end{enumerate}
\end{definition}
For instance, we have a Borel construction $EG \times_{G} X$ over $EG/G$ as a fibrewise space for a topological group $G$ and a $G$-space $X$, where we denote by $EG$ some contractible free $G$-space.
Further, for an adjoint action of $G$ on itself, we obtain the Borel construction denoted by $EG\times_{\ad{}}G$ as a fibrewise pointed space over $EG/G$, which is, in fact, a fibrewise group.

James has also introduced a fibrewise version of an ordinary cohomology as a direct summand of the oridinary cohomology of the total space.
From now on, we use a subscript $B$ to indicate that the notion is `fibrewise notion' over some base space, even if the base space is not the same as $B$. Similarly, we use a superscript $B$ to indicate that the notion is `fibrewise pointed'.

For a fibrewise pointed space $(E,p,X,s)$, the base space $X$ is a retract of $E$, and hence $H^{*}(E,s(X);R) \cong \ker\,s^{*} : H^{*}(E;R) \to H^{*}(X;R)$ can be regarded as a direct summad of $H^{*}(E;R)$.
\begin{definition}[\cite{MR1361889}]
For a fibrewise pointed space $E=(E,p,X,s)$, $H_{B}^{\ast}(E;R)=H^{*}(E,s(X);R)$ is called its fibrewise pointed cohomology.
Then a fibrewise map $\phi : (F,q,Y) \to (E,p,X)$ induces a homomorphism $\phi^{*} : H_{B}^{\ast}(E;R) \subset H^{\ast}(E;R) \to H^{\ast}(F;R)$, while a fibrewise pointed map $\phi : (F,q,Y,r) \to (E,p,X,s)$ induces a homomorphism $\phi^{*} : H_{B}^{\ast}(E;R) \to H_{B}^{\ast}(F;R) \subset H^{\ast}(F;R)$.
\end{definition}

For any map $f : Y \to X$, we have a fibrewise pointed space $E_{f}=(Y \times X,\ifdefined\reviseone{\proj_{1}}\else{\proj_{2}}\fi,Y,(\id_{X} {\times} f){\comp}\Delta)$, where $\proj_{t} : X_{1} \times X_{2} \to X_{t}$ denotes the canonical projection to the $t$-th factor, $t \!=\! 1, \,2$, and $\Delta : X \to X \times X$ denotes the diagonal.
Based on the ideas introduced by James \cite{MR1361889}, Iwase-Sakai \cite{MR2556074} introduced a fibrewise version of (unpointed) L-S category as follows. 

\begin{definition}[\cite{MR2556074}, Definition 1.6]\label{defn:fibrewiseLS}
Let $E=(E,p,X,s)$ be a fibrewise pointed space and $\phi=(\phi,f) : F=(F,q,Y) \to E=(E,p,X)$ be a fibrewise map.
Then the fibrewise (\textit{unpointed}) L-S category of $\phi$, denoted by $\catBb{\phi}$, is the least integer $k \ge 0$ such that there is a cover of $F$ by $(k{+}1)$ open subsets $\{U_i\}$ on each of which $\phi|U_{i}$ is fibrewise homotopic to $s \comp f \!\comp q|_{U_i}$. Then we define a fibrewise (\textit{unpointed}) L-S category of $E$ by $\catBb{E}=\catBb{\id_{E}}$. 
\end{definition}
\begin{remark}
If we stay in the fibrewise pointed category, we obtain James original fibrewise L-S category $\catBB{E}$ of a fiberwise well-pointed space $E$, using fibrewise pointed spaces and maps.
\end{remark}

Firstly, for an extensive use of homotopy theory, we alter the definition of L-S category, following George W. Whitehead by replacing an open cover $\{U_{i}\}$ of a space $X$ with a closed cofibration $F_{i} \hookrightarrow X$  covering $X$.
Then we say $\cat{X} \le t$ if the $t{+}1$-fold diagonal $\Delta^{t+1} : X \to \overset{t+1}\product X$ is compressible into the fat wedge $\overset{t+1}\fatvee X$, where $\overset{t+1}\fatvee X$ is defined by induction on $k \!\ge\! 1$ by $(\overset{1}{\product}X,\overset{1}{\fatvee}X) = (X,\ast)$ and 
\begin{align*}&
(\overset{k+1}{\product}X,\overset{k+1}{\fatvee}X) = (\overset{k}{\product}X \times X,\overset{k}{\fatvee}X \times X \cup \overset{k}{\product}X \times\ast).
\end{align*}
Secondly, we also alter the definition of fibrewise L-S category by replacing an open cover $\{U_{i}\}$ of $E$ with a closed cofibration $F_{i} \hookrightarrow E$ covering $E$ for a fibrewise pointed space $E=(E,p,X,s)$.
Then we say $\catBb{E} \le t$ if the $t{+}1$-fold fibrewise diagonal $\Delta_{B}^{t+1} : E \to \overset{t+1}{\product_{B}} E$ is compressible into the fibrewise fat wedge $\overset{t+1}{\fatvee_{\!B}} E$, where $\overset{t+1}{\fatvee_{\!B}} E$ is defined by induction on $k \!\ge\! 1$ by $(\overset{1}{\product_{B}}E,\overset{1}{\fatvee_{\!B}}E) = (E,s(X))$ and 
\begin{align*}&
(\overset{k+1}{\product_{B}}E,\overset{k+1}{\fatvee_{\!B}}E) = (\overset{k}{\product_{B}}E \!\times_{B}\! E, \overset{k}{\fatvee_{\!B}}E \!\times_{B}\! E \cup \overset{k}{\product_{B}}E \!\times_{B}\! s(X)).
\end{align*}
\begin{remark}
If we consider a monoidal motion planning, we must choose a fibrewise homotopy $h_{i}$ to keep the diagonal part fixed and thus, we obtain a fibrewise pointed map from $E$ to $\overset{t+1}{\fatvee_{\!B}} E \subset \overset{t+1}{\product_{B}} E$ as a fibrewise pointed compression of the fibrewise diagonal $\Delta_{B}^{t+1} : E \to \overset{t+1}{\product_{B}} E$.
\end{remark}

\begin{definition}[\cite{MR2556074}, Definition 6.3]
For a fibrewise pointed space $(E,p,X,s)$, we have the fibrewise pointed loop space $\LoopB{E}=(\LoopB{E},{p},X,{s})$ as follows:
\begin{align*}
\LoopB{E} &
= \{(b,\ell) \!\in\! X \times \path{E} \mid p \comp \ell \!=\! c(b), \ \ell(0) \!=\! \ell(1) \!=\! s (b) \},
\\&
{p}=\proj_{1}|\LoopB{E} : \LoopB{E} \subset X \times \path{E} \xrightarrow{\proj_{1}} X,\qquad 
\\&
{s}(b)=(b,c \comp s(b)),\quad
c(b)=\text{(the constant path at $b$)},
\end{align*}
together with an $A_{\infty}$-structure for $\LoopB{E}$ defined in $\TopBB$ as follows:
\begin{enumerate}
\item
$E_{B}^{t+1}(\LoopB{E})$ is the fibrewise homotopy pull-back of $X \hookrightarrow \overset{t+1}{\product_{B}}E \hookleftarrow \overset{t+1}{\fatvee_{\!B}}E$.
\vspace{.5ex}\item
$P_{\!B}^{t}(\LoopB{E}) \!=\! (P_{\!B}^{t}(\LoopB{E}),p_{t+1},X,s_{t+1})$ is the fibrewise homotopy pull-back of $E \xrightarrow{\Delta_{B}^{t+1}} \overset{t+1}{\product_{B}}E \hookleftarrow \overset{t+1}{\fatvee_{\!B}}E$, where $p_{t+1}^{-1}(b) = P^{t}\Loop{(p^{-1}(b))}, b \in X$.
\vspace{0ex}\item
$e^{E}_{t} : P_{\!B}^{t}(\LoopB{E}) \to E$\vspace{-.5ex} is induced from the inclusion $\overset{t+1}{\fatvee_{\!B}}E \hookrightarrow \overset{t+1}{\product_{B}}E$ by the fibrewise diagonal $\Delta_{B}^{t+1} : E \to \overset{t+1}{\product_{B}}E$, as an extension of $e_{t} : P^{t}\Loop{F} \to F$, where $F$ is the fibre of $p$.
\vspace{.5ex}\item
$p_{t+1}^{\LoopB{E}} : E_{B}^{t+1}(\LoopB{E}) \to P_{\!B}^{t}(\LoopB{E})$ is induced from $s : X \to E$.
\end{enumerate}
We remark that the section of $\overset{t+1}{\product_{B}}E$ is given by $\Delta_{B}^{t+1}{\comp}s : X \xrightarrow{s} E \xrightarrow{\Delta_{B}^{t+1}} \overset{t+1}{\product_{B}}E$.
\end{definition}

When the base point or a section for a fibrewise space is a closed cofibration, Iwase and Sakai showed that a fibrewise L-S category can be characterized in terms of fibrewise $A_{\infty}$-structure.
\begin{fact}\cite[Theorem 7.2]{MR2556074}\label{inj}
For a fibrewise pointed space $E$, we denote by $e_{t}^{E}$ the composition $P^{t}_{\!B}(\LoopB{E}) \hookrightarrow P^{\infty}_{\!B}(\LoopB{E}) \underset{\simeq}{\xrightarrow{e_{\infty}^{E}}} E$ for $t\!\ge\!0$. Then we have
\begin{center}
$\catBb{E} \le t$ $\iff$ $e_{t}^{E}$ has a right homotopy inverse.
\end{center}
\end{fact}
It enables us to define a stronger homotopy invariant, module weight.
\begin{definition}[Iwase-Sakai \cite{MR2556074}, Definition 8.3]
For $u \!\in\! H^{*}_{\!B}(E;R) \!\subset\! H^{*}(E;R)$, we define
\begin{align*}&
\cupl[B]{E;R} = \Max\{\,t\!\ge\! 0 \midvert \exists\,\{u_{1},\ldots,u_{t} \!\in\! H_{B}^{*}(E;R)\}\ \ u_{1}{\cdot}\cdots{\cdot}u_{t}\!\neq\!0\,\},
\\&
\wgt[B]{u;R} = \Max\{\,t \!\ge\! 0 \midvert \forall\,{\phi : F \!\to\! E} \ \catBb{\phi}\!<\!t \Rightarrow \phi^{*}(u)\!=\!0\,\},
\\&
\wgt[B]{E;R} = \Min\left\{\,t \!\ge\! 0 \midvert (e_{t}^{E})^{\ast}  \ \text{is monic}\,\right\},
\\&
\Mwgt[B]{E;\Gamma} = \Min\left\{\,t \!\ge\! 0 \midvert \text{\begin{minipage}{52mm}$\Img(e_{t}^{E})^{\ast}$ is a direct summand of $H^{*}(P^{t}_{\!B}(\LoopB{E});\field_{2})$ as a $\Gamma$-module\end{minipage}}\,\right\},
\end{align*}
where $\Gamma$ is an $\field_{2}$-subalgebra of $\mathcal{A}_{2}$ the modulo $2$ Steenrod algebra.
\end{definition}

Now, let us clarify the relationship between the above invariants and fibrewise L-S category. In \cite{MR2556074}, $\wgt[B]{u;R}$ is defined with $\wgtBB{u;R}$ which is introduced to give a lower bound for $\catBB{E}$ the original James fibrewise L-S category.
Since we do not know the equality of $\catBb{E}$ and $\catBB{E}$ as well as the equality of $\tcm{X}$ and $\tc{X}$ until now, let us state the following.

\begin{prop}\label{prop:multiplicativeformula}
For a fibrewise pointed space $(E,p,X,s)$, we obtain the following.
\begin{enumerate}
\item\label{prop:multiplicativeformula-1}
$\wgt[B]{u {\cdot} v;R} \ge \wgt[B]{u;R} + \wgt[B]{v;R}$
for $u, v \!\in\! H^{*}(E,X;R) \smallsetminus \{0\}$,
\item\label{prop:multiplicativeformula-2}
$\wgt[B]{u;R} = \Max\{ t \!\ge\! 0 \midvert (e_{t-1}^{E})^{\ast}(u) \!=\! 0\}$
for $u \!\in\! H^{*}(E,X;R) \smallsetminus \{0\}$,
\item\label{prop:multiplicativeformula-3}
$\wgt[B]{E;R}=\Max\{ \wgt[B]{u} \midvert u \!\in\! H_{B}^{\ast}(E;R) \smallsetminus \{0\}\}$,
\end{enumerate}
\end{prop}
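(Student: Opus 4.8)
The plan is to deduce all three statements from Fact~\ref{inj} together with the tower structure of the fibrewise $A_\infty$-spaces $P^{t}_{\!B}(\LoopB{E})$, by pulling back known facts about ordinary module weight (as in Iwase's earlier work on $\cat$). The key structural input is that the maps $e^{E}_{t} : P^{t}_{\!B}(\LoopB{E}) \to E$ factor through $e^{E}_{t+1}$ via the inclusion $P^{t}_{\!B}(\LoopB{E}) \hookrightarrow P^{t+1}_{\!B}(\LoopB{E})$, so the induced maps $(e^{E}_{t})^{*}$ on $H^{*}_{\!B}(E;R)$ form a decreasing filtration, stabilising at $(e^{E}_{\infty})^{*}$, which is monic (even an isomorphism onto $H^{*}_{\!B}(E;R)$) since $e^{E}_{\infty}$ is a fibrewise homotopy equivalence.

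First I would prove (2): by definition $\wgt[B]{u;R}$ is the largest $t$ such that every fibrewise map $\phi : F \to E$ with $\catB{\phi} < t$ kills $u$. Using Fact~\ref{inj} relativised to $\phi$ (i.e. $\catB{\phi}\le t-1$ iff $\phi$ lifts through $e^{E}_{t-1}$ up to fibrewise homotopy), one direction is immediate: if $(e^{E}_{t-1})^{*}(u)=0$ then any such $\phi$ factors $\phi^{*}(u)$ through $(e^{E}_{t-1})^{*}(u)=0$, so $\wgt[B]{u;R}\ge t$. Conversely, $e^{E}_{t-1}$ itself is a fibrewise map with $\catB{e^{E}_{t-1}}\le t-1 < t$ (it lifts through itself), so if $\wgt[B]{u;R}\ge t$ it must kill $u$, giving $(e^{E}_{t-1})^{*}(u)=0$. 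Taking the max over $t$ yields the stated formula; here one must check that $\catB{e^{E}_{t-1}}\le t-1$, which follows since $e^{E}_{t-1}$ factors through the fibrewise fat wedge stage by construction (item (3) of the $A_\infty$-structure definition).

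Next I would derive (1) from (2) by a standard Ganea–Whitehead product argument made fibrewise. Given $u,v$ with $\wgt[B]{u;R}=m$ and $\wgt[B]{v;R}=n$, part (2) says $(e^{E}_{m-1})^{*}(u)=0$ and $(e^{E}_{n-1})^{*}(v)=0$. One then needs that the fibrewise join/composite structure of the tower gives a factorisation of $e^{E}_{m+n-1}$ through a fibrewise ``product'' of $P^{m-1}_{\!B}$ and $P^{n-1}_{\!B}$ over $X$ in such a way that $(e^{E}_{m+n-1})^{*}(u\cdot v)$ is computed from $(e^{E}_{m-1})^{*}(u)$ and $(e^{E}_{n-1})^{*}(v)$ by an external product over $X$ — hence vanishes. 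Combined with (2) applied to $u\cdot v$ this gives $\wgt[B]{u\cdot v;R}\ge m+n$. The input here is the fibrewise analogue of the fact that the Ganea tower is multiplicative: $P^{m+n}_{\!B}(\LoopB{E})$ receives a map from $P^{m}_{\!B}(\LoopB{E})\times_{X} P^{n}_{\!B}(\LoopB{E})$ compatible with $e^{E}_{\bullet}$ and the diagonal, which is exactly the content used in \cite{MR2556074} to define $\wgt[B]{}$ in the first place; I would cite that structure rather than rebuild it.

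Finally, (3) is bookkeeping: by definition $\wgt[B]{E;R}$ is the least $t$ with $(e^{E}_{t})^{*}$ monic on $H^{*}_{\!B}(E;R)$. If $t = \wgt[B]{E;R}$ then $(e^{E}_{t-1})^{*}$ is not monic, so there is $0\neq u\in H^{*}_{\!B}(E;R)$ with $(e^{E}_{t-1})^{*}(u)=0$, and then by (2) $\wgt[B]{u;R}\ge t$; since also $\wgt[B]{u;R}\le \wgt[B]{E;R}=t$ always (as $(e^{E}_{t})^{*}$ monic forces $(e^{E}_{s})^{*}(u)\neq 0$ for $s\ge t$, hence $\wgt[B]{u;R}\le t$ by (2)), we get equality, so the max on the right is $\ge t$. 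For the reverse inequality, for any $0\neq u$ with $\wgt[B]{u;R}=r$, part (2) gives $(e^{E}_{r-1})^{*}(u)=0$ so $r-1 < \wgt[B]{E;R}$, i.e. $r\le \wgt[B]{E;R}$; taking the max over $u$ gives $\le t$. \textbf{The main obstacle} I anticipate is (1): making the fibrewise external-product-over-$X$ argument rigorous in the convenient category $\category{NG}$, in particular checking that the relevant pullbacks are homotopy pullbacks and that the Künneth-type external product over the base behaves well enough to conclude the product $(e^{E}_{m-1})^{*}(u)\times_X(e^{E}_{n-1})^{*}(v)$ actually represents $(e^{E}_{m+n-1})^{*}(u\cdot v)$; everything else reduces to the filtration formalism of Fact~\ref{inj}.
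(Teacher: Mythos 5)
Your treatment of (2) and (3) matches the paper's: for (2) the paper likewise combines the lifting criterion ($\catB{\phi}<t$ gives a lift of $\phi$ through $e^{E}_{t-1}$, so $(e^{E}_{t-1})^{*}(u)=0$ forces $\phi^{*}(u)=0$) with the bound $\catB{e^{E}_{t-1}}\le t-1$, which the paper justifies by observing that the filtration $X=P^{0}_{\!B}(\LoopB{E})\subset\cdots\subset P^{t}_{\!B}(\LoopB{E})$ is a fibrewise cone decomposition — a cleaner justification than your ``it lifts through itself,'' which presupposes the relative form of Fact~\ref{inj} rather than deriving the bound. Part (3) is the same bookkeeping in both.

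For (1), however, you diverge from the paper and leave a genuine gap. The paper does \emph{not} route (1) through (2) and a multiplicative structure on the tower; it argues directly from the definition of $\wgt[B]{-;R}$: given $\phi$ with $\catB{\phi}<m+n$, split the covering $\{U_{i}\}$ of $F$ into $U=U_{1}\cup\cdots\cup U_{m}$ and $V=U_{m+1}\cup\cdots\cup U_{m+n}$, so that $\catB{\phi|U}<m$ and $\catB{\phi|V}<n$, hence $\phi^{*}(u)$ restricts to zero on $U$ and $\phi^{*}(v)$ to zero on $V$, and the relative cup product $H^{*}(F,U)\otimes H^{*}(F,V)\to H^{*}(F,U\cup V)=H^{*}(F,F)=0$ kills $\phi^{*}(u{\cdot}v)$. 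Your alternative requires a fibrewise analogue of the multiplicativity of the Ganea tower — a map from $P^{m-1}_{\!B}(\LoopB{E})\times_{X}P^{n-1}_{\!B}(\LoopB{E})$ compatible with $e^{E}_{\bullet}$ through which $(e^{E}_{m+n-1})^{*}(u{\cdot}v)$ is computed as an external product over $X$. That structure is \emph{not} ``exactly the content used in \cite{MR2556074} to define $\wgt[B]{}$'': the definition there is via fibrewise maps of bounded $\catB{}$, with no join/product decomposition of the tower, so your citation does not discharge the obligation. The statement you need is true but is itself essentially equivalent to (1), and establishing it fibrewise (homotopy pullbacks in $\category{NG}$, the join structure of the fibres of $p^{\LoopB{E}}_{t+1}$, and the Künneth-type compatibility you flag) would be more work than the two-line covering argument the paper actually uses. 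So your (1) is a plausible but incomplete alternative; the elementary route is both shorter and self-contained.
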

\begin{proof}
(\ref{prop:multiplicativeformula-1}) \ 
Let us assume that $\wgt[B]{u;R}=m$ and $\wgt[B]{v;R}=n$, and that $\phi : (F,q,Y) \to (E,p,X)$ be a fibrewise map with $\catBb{\phi}<m{+}n$.
Then there is a cover of $F$ by $m{+}n$ open subsets $\{U_{i}\}$, at most, each on which $\phi|U_{i}$ is fibrewise homotopic to $s \comp f \!\comp q |U_{i}$.
Let $U = U_{1} \cup \cdots \cup U_{m}$ and $V = U_{m+1} \cdots \cup U_{m+n}$ to satisfy $\catBb{\phi|{U}}<m$ and $\catBb{\phi|{V}}<n$.
Hence $\phi^{*}(u)|_{U}=(\phi|{U})^{*}(u)=0$ in $H_{B}^{\ast}(U;R)$ and $\phi^{*}(v)|_{V}=(\phi|{V})^{*}(v)=0$ in $H_{B}^{\ast}(V;R)$.
Then we obtain $\phi^{*}(u{\cdot}v)=\phi^{*}(u){\cdot}\phi^{*}(v)=0$ by the definition of cup-products, which implies $\wgtB{u{\cdot}v;R} \ge m+n$.
\par(\ref{prop:multiplicativeformula-2}) \ 
Let $\wgt[B]{u;R}=m$. We can easily see that the filtration $X=P_{\!B}^{0}(\LoopB{E}) \subset P_{\!B}^{1}(\LoopB{E}) \subset \cdots \subset P_{\!B}^{t}(\LoopB{E})$ gives a fibrewise version of a cone decomposition of a fibrewise space $P_{\!B}^{t}(\LoopB{E})$, $t \!\ge\! 1$. It implies that $\catBb{P_{\!B}^{t}(\LoopB{E})} \le t$ and $\catBb{e^{E}_{t}} \le t$, and hence we obtain $(e_{m-1}^{E})^{\ast}(u) \!=\! 0$ and $\Max\{ t \!\ge\! 0 \midvert (e_{t-1}^{E})^{\ast}(u) \!=\! 0\} \ge m = \wgt[B]{u;R}$.
Conversely assume that $\Max\{ t \!\ge\! 0 \midvert (e_{t-1}^{E})^{\ast}(u) \!=\! 0\} =m$.
If $(\phi,f) : (F,q,Y) \to (E,p,X)$ satisfies $\catBb{\phi}<m$, then there exits an open cover of $F$ by at most $m$ open subsets $U_{i}$ on each of which $\phi|U_{i}$ is fibrewise homotopic to $s \comp f \!\comp q |_{U_{i}}$.
By standard arguments of homotopy theory, we may assume that there exits at most $m$ closed cofibrations $F_{i} \hookrightarrow F$. 
Then by extending the homotopy onto $F$ to obtain a fibrewise map $r : F \to \overset{m}{\fatvee_{\!B}}E \subset \overset{m+1}{\product_{B}}E$ a fibrewise compression of the fibrewise diagonal $\Delta^{m}_{B} : E \to \overset{m-1}{\product_{B}}E$, which gives a fibrewise map $\psi : F \to P_{\!B}^{m-1}(\LoopB{E})$ which is a lift of $\phi : F \to E$ on $e^{E}_{m-1} : P_{\!B}^{m-1}(\LoopB{E}) \to E$.
Since $(e^{E}_{m-1})^{*}(u)=0$, we have $\phi^{*}(u)=0$, and it implies (\ref{prop:multiplicativeformula-2}).
\par(\ref{prop:multiplicativeformula-3}) \ 
$\wgt[B]{E}=t \!\ge\! 0$ if and only if $(e_{t}^{E})^{\ast}$ is monic and $(e_{t-1}^{E})^{\ast}$ has non-trivial kernel, which is equivalent to that, for any $u \!\neq\! 0$ in $H_{B}^{\ast}(E;R)$, $(e_{t}^{E})^{\ast}(u) \!\neq\! 0$ but there is an element $v \!\in\! H_{B}^{\ast}(E;R)$ such that $(e_{t-1}^{E})^{\ast}(v) \!=\! 0$, in other words, $\wgt[B]{u} \!\leqq\! t$ for all $u \!\in\! H_{B}^{\ast}(E;R)$ but $\wgt[B]{v}=t$.
\end{proof}

\begin{prop}\label{catwgt}
We have
$\catBb{E} \ge \wgt[B]{E;R} \ge \cupl[B]{E;R}$.
\end{prop}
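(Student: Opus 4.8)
The plan is to chain together two inequalities. For the first, $\catB{E} \ge \wgt[B]{E;R}$, I would invoke Fact~\ref{inj}: if $\catB{E} \le t$, then $e_{t}^{E} : P^{t}_{\!B}(\LoopB{E}) \to E$ admits a right homotopy inverse $\sigma$, so that $e_{t}^{E} \comp \sigma \simeq \id_{E}$ fibrewise. Applying cohomology gives $\sigma^{*} \comp (e_{t}^{E})^{*} = \id$ on $H^{*}_{\!B}(E;R)$, hence $(e_{t}^{E})^{*}$ is monic. By the definition of $\wgt[B]{E;R}$ as the least $t$ for which $(e_{t}^{E})^{*}$ is monic, this yields $\wgt[B]{E;R} \le t$. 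Taking $t = \catB{E}$ finishes this half. (One should remark that the monotonicity $(e_{t}^{E})^{*}$ monic $\Rightarrow$ $(e_{t+1}^{E})^{*}$ monic, which follows since $e_{t}^{E}$ factors through $e_{t+1}^{E}$ via the inclusion $P^{t}_{\!B}(\LoopB{E}) \hookrightarrow P^{t+1}_{\!B}(\LoopB{E})$, makes the phrase ``least $t$'' unambiguous.)

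For the second inequality, $\wgt[B]{E;R} \ge \cupl[B]{E;R}$, suppose $\cupl[B]{E;R} = t$, so there exist $u_{1}, \dots, u_{t} \in H_{B}^{*}(E;R)$ with $u_{1} \cdot \cdots \cdot u_{t} \neq 0$. By Proposition~\ref{prop:multiplicativeformula}(\ref{prop:multiplicativeformula-1}), iterated, $\wgt[B]{u_{1} \cdot \cdots \cdot u_{t};R} \ge \wgt[B]{u_{1};R} + \cdots + \wgt[B]{u_{t};R} \ge t$, since each nonzero class in $H_{B}^{*}(E;R)$ has weight at least $1$. (This last point follows from Proposition~\ref{prop:multiplicativeformula}(\ref{prop:multiplicativeformula-2}): $(e_{-1}^{E})^{*}$ is the restriction to $P^{-1}_{\!B}(\LoopB{E}) = X$, i.e.\ the zero map on $H_{B}^{*}(E;R) = \ker s^{*}$, so $\wgt[B]{u;R} \ge 1$ for every $u \neq 0$.) Then by Proposition~\ref{prop:multiplicativeformula}(\ref{prop:multiplicativeformula-3}), $\wgt[B]{E;R} = \Max\{\wgt[B]{u} \mid u \in H_{B}^{*}(E;R) \smallsetminus \{0\}\} \ge \wgt[B]{u_{1} \cdot \cdots \cdot u_{t};R} \ge t = \cupl[B]{E;R}$.

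I expect the genuinely subtle point to be the first inequality: one must be careful that ``right homotopy inverse'' in Fact~\ref{inj} is taken in the appropriate fibrewise (pointed or unpointed) homotopy category so that applying $H^{*}_{\!B}(-;R)$ is legitimate and functorial, and that the composite $e_{\infty}^{E} \comp (\text{inclusion}) = e_{t}^{E}$ up to fibrewise homotopy is used correctly. The cup-length half is essentially formal once the weight-$\ge 1$ observation is in place. A one-line corollary worth recording is that, combined with Farber–Grant's lower bounds, this realizes $\cupl[B]{E;R}$ as a computable lower bound for $\catB{E}$, which is exactly what the algorithm of the later sections exploits.
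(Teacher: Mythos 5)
Your argument is correct and is essentially the paper's own proof: the first inequality comes from the right homotopy inverse of $e_{t}^{E}$ (Fact~\ref{inj}) making $(e_{t}^{E})^{\ast}$ monic, and the second from iterating Proposition~\ref{prop:multiplicativeformula}(\ref{prop:multiplicativeformula-1}) together with $\wgt[B]{u;R}\ge 1$ for $u\neq 0$. The only blemish is a harmless index slip in your parenthetical: the relevant vanishing is $(e_{0}^{E})^{\ast}(u)=s^{\ast}(u)=0$ on $P^{0}_{\!B}(\LoopB{E})=X$ (taking $t=1$ in Proposition~\ref{prop:multiplicativeformula}(\ref{prop:multiplicativeformula-2})), not $(e_{-1}^{E})^{\ast}$.
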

\begin{proof}
Let $\catBb{E} = t$. Then there is a map $s : E \to P^{t}_{\!B}(\LoopB{E})$ such that $e_{t}^{E}\comp s = \id_E$, which implies that $s^{\ast} \comp (e_{t}^{E})^{\ast} = (\id_E)^{\ast}$.
Thus, $(e_{t}^{E})^{\ast} : H_{B}^{\ast}(E;R) \to H_{B}^{\ast}(P^{t}_{\!B}(\LoopB{E});R)$ is monic, and hence $\catBb{E} \ge \wgt[B]{E;R}$.
The latter part is obtained from $\wgt[B]{u;R}\!\ge\! 1$ by Proposition \ref{prop:multiplicativeformula} (\ref{prop:multiplicativeformula-1}).
\end{proof}

\begin{remark}
Though it is not necessary in our arguments, \cite[Theorem 1.7 \& 1.10]{MR2556074} says the following equalities for the fibrewise well-pointed space $\double{X}=E_{\id_{X}}$, while we skip the details.
\par
\begin{enumerate*}{(1)}
\item
$\tc{X} = \catBb{\double{X}}$,
\hitem
$\cupl[\varpi]{X;R} = \cuplB{\double{X};R}$.
\end{enumerate*}\par
Also the proof of \cite[Theorem 1.11]{MR2556074} claiming $\wgt[\varpi]{u;R} = \wgtBB{u;R}$ for $u \!\in\! H_{B}^{\ast}(\double{X};R)$ works fine to obtain $\wgt[\varpi]{u;R} = \wgtB{u;R}$ for $u \!\in\! H_{B}^{\ast}(\double{X};R)$, while we skip the details, too.
\end{remark}

Using the above observations, we obtain the following.

\begin{thm}\label{thm:mainthm}
Let $m \ge 2$.
Assume that a group $G$ acts on $S^{m}$ freely, satisfying $\cat{X}=\wgt{X;\field_{2}}=m$ where $X=S^{m}/G$, and thus the weight of the generator $z \in H^{m}(X;\field_{2}) \cong \field_{2}$ is equal to $m$, where $\field_{2}$ denotes the prime field of characteristic $2$. 
Let $\widehat{z} = z \otimes z$ be the generator of $H_{B}^{2m}(\double{X};\field_{2}) = H^{2m}(X \times X,\Delta(X);\field_{2}) \cong \field_{2}$.
Then we have the following.
\begin{enumerate}
\item The three statements (\ref{enum:condition 1}), (\ref{enum:condition 2}) and (\ref{enum:condition 3}) below are equivalent.
\item The statements (\ref{enum:condition 1}), (\ref{enum:condition 2}) or (\ref{enum:condition 3}) implies $\catBb{\double{X}} = \tc{X} = 2m$.\par
\end{enumerate}
\begin{enumerate*}{(i)}
\vitem\label{enum:condition 1} $\Mwgt[B]{\double{X};\field_{2}} \!\ge\! 2m$,
\hitem\label{enum:condition 2} $\wgt[B]{\double{X};\field_{2}} \!\ge\! 2m$,
\hitem\label{enum:condition 3} $\wgt[B]{\widehat{z};\field_{2}} \!\ge\! 2m$.
\end{enumerate*}
\end{thm}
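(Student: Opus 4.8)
The plan is to reduce the theorem to two inputs: the elementary topology of the closed manifold $X$, and the $\mathcal{A}_{2}$-module structure of $H^{*}$ of a single fibrewise Ganea space.

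First I would record the preliminaries. Since $G$ acts freely and orientation-preservingly, $X=S^{m}/G$ is a closed connected orientable $m$-manifold; the hypothesis $\cat{X}=m$ is vacuous for $m=2$ (the only orientation-preserving free action on $S^{2}$ is trivial, and $\cat{S^{2}}=1$), so we may assume $m\ge3$. Then $H^{j}(X;\field_{2})=0$ for $j>m$, $H^{m}(X;\field_{2})\cong\field_{2}=\langle z\rangle$, and $H^{m}(X;\integral)\cong\integral$ is torsion-free, so $Sq^{1}$ vanishes on $H^{m-1}(X;\field_{2})$, hence on $H^{2m-1}(X{\times}X;\field_{2})$ as well. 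By Künneth $H^{2m}(X{\times}X;\field_{2})\cong\field_{2}=\langle z\otimes z\rangle$ and $H^{>2m}(X{\times}X;\field_{2})=0$; since $\dim\Delta(X)=m<2m$, the exact sequence of $(X{\times}X,\Delta(X))$ makes $z\otimes z$ the unique nonzero class of $H^{2m}_{B}(\double{X};\field_{2})$, with $H^{>2m}_{B}(\double{X};\field_{2})=0$. I would also use the $\mathcal{A}_{2}$-module splitting $H^{*}(X{\times}X;\field_{2})=\proj_{1}^{*}H^{*}(X;\field_{2})\oplus H^{*}_{B}(\double{X};\field_{2})$, the chain
\[
\wgt[B]{z\otimes z;\field_{2}}\ \le\ \wgt[B]{\double{X};\field_{2}}\ \le\ \Mwgt[B]{\double{X};\mathcal{A}_{2}}\ \le\ \catB{\double{X}}\ =\ \tc{X}\ \le\ 2m
\]
(Proposition \ref{prop:multiplicativeformula}(\ref{prop:multiplicativeformula-3}); an $\mathcal{A}_{2}$-split monomorphism is a monomorphism; the retraction argument of Fact \ref{inj} applied to a right homotopy inverse of $e^{\double{X}}_{t}$; the Remark after Proposition \ref{catwgt}; and $\tc{X}\le\cat{X{\times}X}\le\dim(X{\times}X)$), and the fact that the homotopy fibre of $e^{\double{X}}_{t}$ is the $(t{+}1)$-fold join $(\Loop X)^{*(t+1)}$, which is $(t{-}1)$-connected, so that $(e^{\double{X}}_{t})^{*}$ is an isomorphism in degrees $\le t-1$ and a monomorphism in degree $t$; with Proposition \ref{prop:multiplicativeformula}(\ref{prop:multiplicativeformula-2}) this yields $\wgt[B]{u;\field_{2}}\le\deg u$ for every $0\neq u\in H^{*}_{B}(\double{X};\field_{2})$.

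Given these, part (2) is immediate: inserting any of (\ref{enum:condition 1}), (\ref{enum:condition 2}) or (\ref{enum:condition 3}) into the displayed chain collapses it, so $\catB{\double{X}}=\tc{X}=2m$. For part (1), the implications (\ref{enum:condition 3})$\Rightarrow$(\ref{enum:condition 2})$\Rightarrow$(\ref{enum:condition 1}) are read off the chain; (\ref{enum:condition 2})$\Rightarrow$(\ref{enum:condition 3}) holds because by Proposition \ref{prop:multiplicativeformula}(\ref{prop:multiplicativeformula-3}) some $0\neq u\in H^{*}_{B}(\double{X};\field_{2})$ has $\wgt[B]{u;\field_{2}}\ge2m$, hence $\deg u=2m$ by the degree estimate, hence $u$ is a nonzero multiple of $z\otimes z$. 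The remaining implication (\ref{enum:condition 1})$\Rightarrow$(\ref{enum:condition 2}) is, by Proposition \ref{prop:multiplicativeformula}(\ref{prop:multiplicativeformula-2}), the contrapositive of the claim that \emph{if $(e^{\double{X}}_{2m-1})^{*}$ is a monomorphism then $\Img(e^{\double{X}}_{2m-1})^{*}$ is an $\mathcal{A}_{2}$-direct summand of $H^{*}(P^{2m-1}_{\!B}(\LoopB{\double{X}});\field_{2})$}: indeed that would make $(e^{\double{X}}_{2m-1})^{*}$ $\mathcal{A}_{2}$-split monic, so $\Mwgt[B]{\double{X};\mathcal{A}_{2}}\le2m-1$, contradicting (\ref{enum:condition 1}).

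So the heart of the proof --- and the step I expect to be the main obstacle --- is this splitting claim, which I would attack via the Serre spectral sequence of $(\Loop X)^{*2m}\to P^{2m-1}_{\!B}(\LoopB{\double{X}})\to\double{X}$. Since the fibre is $(2m{-}2)$-connected, $\Img(e^{\double{X}}_{2m-1})^{*}$ already fills $H^{*}(P^{2m-1}_{\!B}(\LoopB{\double{X}});\field_{2})$ in degrees $m{+}1,\dots,2m{-}2$; in degrees $\le m$ it has the $\mathcal{A}_{2}$-complement obtained by pulling $H^{*}(X;\field_{2})$ back along the structure projection to $X$ (an $\mathcal{A}_{2}$-submodule because $H^{>m}(X;\field_{2})=0$), disjoint from the image by the splitting above; in degrees $\ge2m{+}1$ there is nothing to complement; so the whole question reduces to the single action $Sq^{1}\colon H^{2m-1}\to H^{2m}$. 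On the image in degree $2m{-}1$ this $Sq^{1}$ vanishes by the torsion-freeness input, and on the new classes in degree $2m{-}1$ (which come from $H^{2m-1}$ of the fibre) its fibre part vanishes because $m\ge3$ forces $\widetilde H_{1}(\Loop S^{m};\field_{2})=0$, hence $H^{2m}((\Loop X)^{*2m};\field_{2})=0$. What then remains, and where the genuine work lies, is to show that $Sq^{1}$ of a permanent cycle in filtration $0$ cannot jump up the base filtration to reach $(e^{\double{X}}_{2m-1})^{*}(z\otimes z)$ in $E_{\infty}^{2m,0}$; this is a bookkeeping problem with the differentials $d_{2},\dots,d_{2m}$ and the transgression-type formulas for $Sq^{1}$, and it is precisely this bookkeeping, for the fibration attached to $X=S^{3}/Q_{8}$ --- where $\cat{X}=\wgt{X;\field_{2}}=3$ holds because the mod~$2$ fundamental class of $S^{3}/Q_{8}$ is a nonzero triple product of one-dimensional classes --- that the program in the Appendix carries out.
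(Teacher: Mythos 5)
Your chain of inequalities, your part (2), the implications (iii)$\Rightarrow$(ii)$\Rightarrow$(i), and your degree-counting proof of (ii)$\Rightarrow$(iii) (some $u\neq0$ with $\wgt[B]{u;\field_{2}}\ge 2m$ must satisfy $\deg u=2m$, hence $u=z\otimes z$) are all sound, and that last step is a clean argument not spelled out in the paper. The gap is in (i)$\Rightarrow$(ii): it rests entirely on the unproved claim that if $(e^{\double{X}}_{2m-1})^{*}$ is a monomorphism then its image is an $\mathcal{A}_{2}$-direct summand. Your Serre spectral sequence sketch explicitly leaves open the decisive point (whether $Sq^{1}$ can carry a filtration-zero permanent cycle onto $(e^{\double{X}}_{2m-1})^{*}(z\otimes z)$), and your closing assertion that the Appendix program "carries out this bookkeeping" is false: that program solves the cochain equation $\delta u=c$ in $C^{6}(S^{3}\times_{\ad{}}P^{5}G;\field_{2})$ so as to establish Lemma \ref{lem:mainlem}, i.e.\ to verify condition (iii) directly for $S^{3}/Q_{8}$; it computes no Steenrod operations and has no bearing on the equivalence asserted in this theorem. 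Note also that the theorem takes $\Mwgt[B]{-\,;\field_{2}}$, i.e.\ $\Gamma=\field_{2}$, so an $\mathcal{A}_{2}$-splitting is stronger than what is required, and your argument is pitched at the harder target.

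The paper closes the cycle differently and much more cheaply, proving (i)$\Rightarrow$(iii) in contrapositive form by geometry rather than by Steenrod operations. Write $X\times X=W_{0}\cup_{f}D^{2m}$ with $W_{0}$ the once-punctured manifold, and let $E_{0}\subset E=\double{X}$ be the corresponding fibrewise subspace, so that $H^{*}(E;\field_{2})\cong H^{*}(E_{0};\field_{2})\oplus\field_{2}\langle z\otimes z\rangle$. Since $\catB{E_{0}}\le 2m-1$, the inclusion $E_{0}\hookrightarrow E$ lifts through $e^{E}_{2m-1}$, and the induced retraction splits the summand $H^{*}(E_{0};\field_{2})$ off the image of $(e^{E}_{2m-1})^{*}$; if moreover $(e^{E}_{2m-1})^{*}(z\otimes z)\neq0$ (the negation of (iii) via Proposition \ref{prop:multiplicativeformula}), the remaining one-dimensional summand lands in the kernel of that retraction and splits off as well, whence $\Mwgt[B]{\double{X};\field_{2}}\le 2m-1$, contradicting (i). To repair your proof you must either supply the missing $Sq^{1}$ analysis in full or, better, replace your (i)$\Rightarrow$(ii) step by this puncturing argument.
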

\begin{proof}
Since we know that $\wgt[B]{z \otimes z} \le \wgt[B]{X;\field_{2}} \le \Mwgt[B]{X;\field_{2}} \le \catBb{X} \le \cat{X \times X} \le \dim{X \times X}$ $=$ $2m$, it is straitforward to obtain (\ref{enum:condition 3}) $\Rightarrow$ (\ref{enum:condition 2}) $\Rightarrow$ (\ref{enum:condition 1}) $\Rightarrow$ $\catBb{X} = \tc{X}$ $=$ $2m$.
So we are left to show (\ref{enum:condition 1}) $\Rightarrow$ (\ref{enum:condition 3}):
let $\wgt[B]{z \otimes z} < 2m$ and $(e_{2m-1}^{E})^{\ast}(z \otimes z) \!\ne\! 0$. 
Let $W=X \times X$ and $W_{0}=W \smallsetminus D^{2m}$ the once punctured submanifold.
Then we have $W \homeo W_{0} \cup_{f} D^{2m}$, where $f$ is an attaching map, and hence we obtain $H^{*}(E;\field_{2}) \cong H^{*}(E_{0};\field_{2}) \oplus H^{*}(D^{2m},S^{2m-1};\field_{2})$, where the latter direct summand is isomorphic to $\field_{2}$ generated by $z \otimes z$.
Since $\catBb{E_{0}} \le 2m{-}1$, the inclusion $E_{0} \hookrightarrow E$ has a lift to $e^{E}_{2m-1} : P_{\!B}^{2m-1}(\LoopB{E}) \to E$.
Thus the direct summand $H^{*}(E_{0};\field_{2}) $ mapped to a direct summand of $H^{*}(P_{\!B}^{2m-1}(\LoopB{E});\field_{2})$ by $(e^{E}_{2m-1})^{*}$.
By the hypothesis, the entire module $H^{*}(E;\field_{2})$ mapped to a direct summand of $P_{\!B}^{2m-1}(\LoopB{E})$ by $(e^{E}_{2m-1})^{*}$, and hence we obtain $\Mwgt[B]{\double{X}} < 2m$.
\end{proof}

We remark that $\wgtB{z \otimes z}<2m$ may not imply $\tc{X}$ $<$ $2m$.
We must know about a higher Hopf invariant to show a result similar to that for L-S category.

From now on, $G$, $M$ and $p$ stands for $Q_8$, $S^3/Q_{8}$ and the canonical projection of the principal $Q_{8}$-bundle $S^{3} \twoheadrightarrow M$, respectively.
We show the following in \S \ref{sect:mainprop}.

\begin{thm}\label{thm:cohomology}
The generator $z \in H^{3}(M;\field_{2}) \cong \field_{2}$ satisfies $\wgt{z}=\cupl{z}=3$ which implies\par
\hfil$\cupl{M} = \wgt{M} = \cat{M} = \dim{M} = 3.$\hfil
\end{thm}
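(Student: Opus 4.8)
The plan is to identify enough of the $\field_2$-cohomology ring of $M=S^3/Q_8$ to present the top class $z\in H^3(M;\field_2)$ as a triple cup product of one-dimensional classes, and then to extract the four equalities from the classical chain $\cupl{M}\le\wgt{M}\le\cat{M}\le\dim M$ (the unfibred counterpart of Propositions~\ref{prop:multiplicativeformula} and~\ref{catwgt}) together with $\dim M=3$.

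First I would assemble the low-degree data. Since $Q_8^{\mathrm{ab}}\cong(\integral/2)^2$ we have $H^1(M;\field_2)\cong\hom(Q_8,\field_2)\cong\field_2^2$; fix a basis $x,y$. The group $Q_8$ acts on $S^3\subset\quaternion$ by unit quaternions, hence orientation-preservingly, so $M$ is a closed orientable $3$-manifold, and Poincaré duality over $\field_2$ gives $H^2(M;\field_2)\cong\field_2^2$, $H^3(M;\field_2)=\langle z\rangle\cong\field_2$, and a nondegenerate cup pairing $H^1(M;\field_2)\otimes H^2(M;\field_2)\to H^3(M;\field_2)$. Next, $Sq^1$ on $H^1$ is the Bockstein of $0\to\integral/2\to\integral/4\to\integral/2\to0$, and $Q_8$ has no quotient isomorphic to $\integral/4$; hence $u^2=Sq^1u\neq0$ for every $0\neq u\in H^1(M;\field_2)$, and applying this to $x$, $y$ and $x+y$ shows that $\{x^2,y^2\}$ is a basis of the two-dimensional space $H^2(M;\field_2)$.

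The heart of the matter is the relation $xy=x^2+y^2$ in $H^2(M;\field_2)$. By commutativity and the $x\leftrightarrow y$ symmetry, $xy$ is a scalar multiple of $x^2+y^2$, so the only alternative to be excluded is $xy=0$. The clean way is to invoke the ring structure $H^*(Q_8;\field_2)\cong\field_2[x,y,w]/(x^2+xy+y^2,\; x^2y+xy^2)$ with $|x|=|y|=1$ and $|w|=4$ (equivalently: the central extension $1\to\integral/2\to Q_8\to(\integral/2)^2\to1$ is non-split along each of the three lines, so its class $x^2+xy+y^2\in H^2(B(\integral/2)^2;\field_2)$ transgresses to $0$ in $H^*(BQ_8;\field_2)$), and to pull it back along the classifying map $f:M\to BQ_8$, which is a $3$-equivalence and therefore induces isomorphisms on $H^{\le 2}(-;\field_2)$ and on $H^3(-;\field_2)$. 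Granting $xy=x^2+y^2$, the Cartan formula (or the Wu formula, $Sq^1$ vanishing on $H^2$ of an orientable $3$-manifold) gives $x^2y+xy^2=Sq^1(xy)=Sq^1(x^2+y^2)=0$, hence $x^2y=xy^2$, while $x^3=x(xy+y^2)=x^2y+xy^2=0$. Since $x^2\neq0$, Poincaré duality forces $x^2$ to pair nontrivially with $H^1$; as $x^2\cdot x=x^3=0$, we must have $x^2\cdot y=x^2y\neq0$, i.e. $z=x^2y=x\cdot x\cdot y$.

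It then remains to read off the invariants. As $z$ is a product of three classes in $\widetilde H^1(M;\field_2)\subset\widetilde H^*(M;\field_2)$, $\cupl{z}\ge3$; and $\cupl{z}\le3$ because a product of four positive-dimensional classes lies in $H^{\ge 4}(M;\field_2)=0$; hence $\cupl{z}=3$. Since category weight is submultiplicative and every nonzero positive-dimensional class has weight at least $1$, $\wgt{z}\ge\wgt{x}+\wgt{x}+\wgt{y}\ge3$, while $\wgt{z}\le\cat{M}\le\dim M=3$; hence $\wgt{z}=3$. Finally $3=\cupl{z}\le\cupl{M}\le\wgt{M}\le\cat{M}\le\dim M=3$, so all four invariants agree and equal $3$. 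The single genuinely group-theoretic ingredient, and the step I expect to be the main obstacle, is the relation $xy=x^2+y^2$: Poincaré duality by itself does not detect it, since a Poincaré-duality algebra over $\field_2$ with $\dim H^1=2$ may well have $H^1\cdot H^1$ one-dimensional (as happens for the lens space $L(4,1)=S^3/(\integral/4)$), so it is precisely the non-splitness of the central extension of $Q_8$ in every direction — equivalently, that $H^*(M;\field_2)$ is generated in degree one — that must be brought in.
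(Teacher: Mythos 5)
Your proof is correct, and it reaches the same punchline as the paper ($z=x^{2}{\cdot}y\neq 0$ is a triple product of degree-one classes, whence $3\le\cupl{z}\le\wgt{z}\le\cat{M}\le\dim{M}=3$), but it gets to that punchline by a different route. The paper quotes the full ring $H^{\ast}(BQ_8;\field_2)=A^{*}\otimes\field_2[w]$ and then runs the Serre spectral sequence of $S^{3}\to M\to BG$, observing that the only nontrivial differential is $d_4(s_3)=w$, so that $H^{\ast}(M;\field_2)\cong A^{*}$ as a ring and in particular $A^{3}=\field_2\{x^{2}y\}$. You instead work intrinsically on the $3$-manifold: Poincar\'e duality gives the additive structure, the Bockstein argument ($Q_8$ has no $\integral/4$ quotient, so $Sq^{1}u=u^{2}\neq0$ for all $u\neq0$ in $H^{1}$) gives the basis $\{x^{2},y^{2}\}$ of $H^{2}$, and only the single relation $x^{2}+xy+y^{2}=0$ needs to be imported from $H^{2}(BQ_8;\field_2)$ (equivalently from the extension class of $1\to\integral/2\to Q_8\to(\integral/2)^{2}\to1$) via the $3$-connected classifying map; nondegeneracy of the duality pairing together with $x^{3}=0$ then forces $x^{2}y\neq0$. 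You correctly flag that this last relation is the essential group-theoretic input that duality alone cannot supply. Your approach buys a computation that never touches degrees above $3$ of the group cohomology and makes transparent exactly which piece of $H^{\ast}(Q_8;\field_2)$ is used; the paper's spectral-sequence computation buys the full ring isomorphism $H^{\ast}(M;\field_2)\cong A^{*}$ in one stroke, which it needs anyway elsewhere (the classes $X$, $Y$ and the product $X^{3}Y^{2}$ in the proof of Proposition \ref{prop:mainprop}). The concluding chain of (in)equalities is handled identically in both arguments.
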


\begin{prop}\label{prop:mainprop}
	$5 \le \cuplB{\double{M}}\le \catBb{\double{M}} \le 2\cat{M} = 6$.
\end{prop}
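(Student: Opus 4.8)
The plan is to split the displayed chain into its formal right half and its content-bearing left half. For the right half I would only assemble results already in hand: $\catB{\double{M}}=\tc{M}$ is the Iwase-Sakai identification of topological complexity with the fibrewise unpointed L-S category of $\double{M}=E_{\id_{M}}$ (\cite{MR2556074}, recalled in the Remark after Proposition~\ref{catwgt}); $\tc{M}\le\cat{M\times M}\le 2\cat{M}$ is Farber's inequality \cite{MR1957228}; and $\cat{M}=3$ is Theorem~\ref{thm:cohomology}, so $2\cat{M}=6$. Finally $\cuplB{\double{M}}\le\catB{\double{M}}$ is immediate from Proposition~\ref{catwgt}, since $\catB{E}\ge\wgt[B]{E;\field_{2}}\ge\cuplB{E;\field_{2}}$. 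Hence everything comes down to the single estimate $\cuplB{\double{M};\field_{2}}\ge 5$.

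To get this I would pass to Farber's zero-divisor cup-length through $\cuplB{\double{M};\field_{2}}=\cupl[\varpi]{M;\field_{2}}$ (again the Remark after Proposition~\ref{catwgt}) and compute inside $H^{*}(M\times M;\field_{2})=H^{*}(M;\field_{2})\otimes H^{*}(M;\field_{2})$, using the mod $2$ cohomology ring of $M=S^{3}/Q_{8}$. This ring is the truncation in degrees $\le 3$ of $H^{*}(BQ_{8};\field_{2})=\field_{2}[x,y,e]/(x^{2}+xy+y^{2},\ x^{2}y+xy^{2})$ with $|x|=|y|=1$ and $|e|=4$ — the computation underlying the proof of Theorem~\ref{thm:cohomology} in \S\ref{sect:mainprop} — so that $H^{1}$ and $H^{2}$ are $2$-dimensional, $H^{3}=\field_{2}$, one has $x^{2}=xy+y^{2}\ne 0$ and $x^{3}=y^{3}=0$, and the generator $z\in H^{3}(M;\field_{2})$ equals $xy^{2}=x^{2}y$.

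The key step is then to name five zero-divisors with nonzero product. I would take $\bar{x}=x\otimes 1+1\otimes x$ and $\bar{y}=y\otimes 1+1\otimes y$, both of which lie in $H_{B}^{*}(\double{M};\field_{2})=H^{*}(M\times M,\Delta(M);\field_{2})=\ker(\Delta^{*} : H^{*}(M\times M;\field_{2})\to H^{*}(M;\field_{2}))$. Working over $\field_{2}$ one has $\bar{x}^{2}=x^{2}\otimes 1+1\otimes x^{2}$, hence $\bar{x}^{3}=x^{2}\otimes x+x\otimes x^{2}$ after using $x^{3}=0$; multiplying by $\bar{y}^{2}=y^{2}\otimes 1+1\otimes y^{2}$ and using $x^{2}y^{2}=0$ (it sits in degree $4$) and $xy^{2}=z$ gives
\[
\bar{x}^{3}\,\bar{y}^{2}=x^{2}y^{2}\otimes x+x^{2}\otimes xy^{2}+xy^{2}\otimes x^{2}+x\otimes x^{2}y^{2}=x^{2}\otimes z+z\otimes x^{2}.
\]
This element lies in the summand $H^{2}\otimes H^{3}\oplus H^{3}\otimes H^{2}$ of $H^{5}(M\times M;\field_{2})$ and is nonzero, because $x^{2}$ and $z$ are nonzero over $\field_{2}$ and the two tensor summands are linearly independent. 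So $\bar{x},\bar{x},\bar{x},\bar{y},\bar{y}$ is a list of five zero-divisors with nonzero product, giving $\cuplB{\double{M};\field_{2}}\ge 5$ and finishing the proof.

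The only delicate point inside this proposition is the choice of that $5$-fold product: since $H^{*}(M;\field_{2})$ vanishes above degree $3$, the naive candidates collapse — $\bar{x}^{4}=0$ and $\bar{x}^{3}\bar{y}^{3}=z\otimes z+z\otimes z=0$, so in fact $\cupl[\varpi]{M;\field_{2}}=5$ exactly — and one must land in $H^{5}$ rather than $H^{6}$. The genuinely hard question left open here, namely whether $\tc{M}$ equals $5$ or $6$, is exactly what the module-weight criterion of Theorem~\ref{thm:mainthm} together with the computer computation of \S\ref{sect:mainprop}--\S\ref{sect:mainlem} are built to settle; that is where the real obstacle lies.
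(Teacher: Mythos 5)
Your proposal is correct and takes essentially the same route as the paper: the paper also reduces everything to exhibiting the five-fold product $X^{3}\cdot Y^{2}\neq 0$ with $X=x\otimes 1+1\otimes x$ and $Y=y\otimes 1+1\otimes y$, obtaining the remaining (in)equalities from Theorem \ref{thm:cohomology}, Proposition \ref{catwgt}, and the cited Iwase--Sakai and Farber results. Your explicit evaluation $X^{3}\cdot Y^{2}=x^{2}\otimes z+z\otimes x^{2}$ merely writes out the computation the paper leaves implicit.
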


The following statement is our main result.

\begin{thm}\label{thm:maincor}
$\tc{M} = \catBb{\double{M}} = 6$.
\end{thm}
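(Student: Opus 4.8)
The plan is to sharpen the lower bound of Proposition~\ref{prop:mainprop} from $5$ to $6$. That proposition already gives $5 \le \tc{M} = \catB{\double{M}} \le 6$, so the whole content of Theorem~\ref{thm:maincor} is the inequality $\catB{\double{M}} \ge 6$, i.e. $\tc{M} \ne 5$. By Theorem~\ref{thm:cohomology} the free orientation-preserving $Q_8$-action on $S^{3}$ satisfies the hypotheses of Theorem~\ref{thm:mainthm} with $m = 3$ (namely $\cat{M} = \wgt{M;\field_{2}} = \dim{M} = 3$, and the generator $z \in H^{3}(M;\field_{2})$ has $\wgt{z} = 3$), so by Theorem~\ref{thm:mainthm}(2) it suffices to verify any one of the three equivalent conditions listed there. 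I would verify condition (iii), $\wgt[B]{z \otimes z;\field_{2}} \ge 6$; by Proposition~\ref{prop:multiplicativeformula}(\ref{prop:multiplicativeformula-2}) this is the single cohomological assertion
\[
(e^{\double{M}}_{5})^{\ast}(z \otimes z) = 0 \qquad \text{in } H^{6}\bigl(P^{5}_{\!B}(\LoopB{\double{M}});\field_{2}\bigr),
\]
after which Theorem~\ref{thm:mainthm}(2) upgrades it to $\catB{\double{M}} = \tc{M} = 6$; together with Proposition~\ref{prop:mainprop} this is the theorem. Equivalently one could verify the a priori different-looking conditions (i) or (ii), the equivalence being exactly Theorem~\ref{thm:mainthm}(1).

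To make the displayed identity computable I would use that $M = S^{3}/Q_8$ is covered by the $2$-connected group $S^{3}$ with free $Q_8$-action, and that $\double{M} = E_{\id_{M}}$ is a fibrewise pointed space over $M$ rather than over $M \times M$. This is the reduction of Iwase--Sakai--Tsutaya \cite{MR3975098}: by the definition of the fibrewise $A_{\infty}$-structure the space $P^{t}_{\!B}(\LoopB{\double{M}})$ is a bundle over $M$ with fibre the ordinary Ganea space $P^{t}(\Loop{M})$, and $e^{\double{M}}_{t}$ is the fibrewise extension of the Ganea map $e_{t} : P^{t}(\Loop{M}) \to M$. Pulling everything back along $p : S^{3} \to M$ trades $M$ for its universal cover $S^{3}$, so that $H^{*}$ of each such bundle is computed by a complex of the form $\hom_{\field_{2}[Q_8]}\!\bigl(C_{*}(S^{3};\field_{2}),\, A^{*}\bigr)$ (with a twisting differential): the ``group'' direction is the cellular chain complex of $S^{3}$, which is the truncation to degrees $0,1,2,3$ of a $4$-periodic free $\field_{2}[Q_8]$-resolution of $\field_{2}$, and the ``fibre'' direction $A^{*}$ is a cochain model for $P^{t}(\Loop{M})$, which I would build from the two-sided bar / $A_{\infty}$ filtration on $\Loop{M}$ (whose components are indexed by $Q_8$, each equivalent to $\Loop{S^{3}}$, with $H_{*}(\Loop{S^{3}};\field_{2})$ polynomial on a single degree-$2$ class). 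The crucial gain is that only $\field_{2}[Q_8]$ ever enters, never $\field_{2}[Q_8 \times Q_8]$. Truncated to total degree $\le 6$ and to $t = 5$, all of these complexes are finite-dimensional over $\field_{2}$, the map $(e^{\double{M}}_{5})^{\ast}$ becomes an explicit matrix over $\field_{2}$, and the displayed identity becomes the solvability of a single linear system $(e^{\double{M}}_{5})^{\ast}(\widetilde{z \otimes z}) = \partial \xi$ over $\field_{2}$.

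The remaining step is the explicit computation: build the free $\field_{2}[Q_8]$-resolution, assemble the fibrewise bar model together with the chain map induced by $e_{5}$, cut everything off in degrees $\le 6$, and solve the linear system over $\field_{2}$; the python program of the Appendix carries this out and exhibits a solution $\xi$, so $(e^{\double{M}}_{5})^{\ast}(z \otimes z) = 0$ and condition (iii) holds. I expect the main obstacle to lie entirely here, with two faces. The first is correctness: Ganea spaces are iterated homotopy pushouts, so one must pin down a genuinely valid equivariant cochain model for $P^{t}(\Loop{M})$ and for $e_{t}$ on cochains --- with honest differentials, and with the class $z \otimes z$ identified on the nose rather than up to unspecified homotopy --- and this is precisely what the algorithm in the Appendix is engineered to guarantee. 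The second is size: even after the $Q_8$ versus $Q_8 \times Q_8$ reduction the complexes in degrees $\le 6$ are large enough that the cocycle and coboundary data cannot be displayed by hand (``the explicit answer to our equation is still too long to print out''), so the final verification must be, and is, performed as machine linear algebra over $\field_{2}$.
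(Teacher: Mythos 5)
Your reduction of the theorem to a single vanishing statement is exactly the paper's: Theorem \ref{thm:cohomology} supplies the hypotheses of Theorem \ref{thm:mainthm} with $m=3$, and Proposition \ref{prop:multiplicativeformula} (\ref{prop:multiplicativeformula-2}) converts condition (\ref{enum:condition 3}) into Lemma \ref{lem:mainlem}, i.e.\ $(e^{\double{M}}_{5})^{\ast}(z\otimes z)=0$. The gap lies in how you propose to verify that vanishing. You want a cochain model for the bundle $P^{5}_{\!B}(\LoopB{\double{M}})\to M$ whose fibre direction models $P^{5}(\Loop{M})$ via the bar filtration on $\Loop{M}\simeq Q_8\times\Loop{S^{3}}$, with $H_{*}(\Loop{S^{3}};\field_{2})$ polynomial on a degree-two class. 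That object is never constructed: you would need the $Q_8$-equivariant fibrewise $A_{\infty}$-structure on $\Loop{M}$, honest differentials on the resulting (much larger) complex, and an identification of $e_{5}$ and of $z\otimes z$ on cochains --- and you defer all of this to ``the algorithm in the Appendix,'' which does not implement your model. There is no trace of $\Loop{S^{3}}$ in the program; its cells are $[\sigma|\{g_{1}|\cdots|g_{t}\}]$ with $g_{j}\in Q_8$.

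What is missing is the step that makes the computation both legitimate and finite: replace $\double{M}$ by $E_{i}=\double{BG}|_{M}$ along $\id_{M}\times i$. Since $i^{*}$ is surjective, $z\otimes z$ is pulled back from $E_{i}$, so it suffices to prove vanishing after composing with $\lambda : P^{5}_{\!B}\LoopB{\double{M}}\to P^{5}_{\!B}\LoopB{E_{i}}$. The fibre of $E_{i}$ is $BG$, whose loop space is the \emph{discrete} group $G$; by Benson's result (Lemma \ref{lem:core}) the fibrewise loop space is fibrewise $A_{\infty}$-equivalent to $S^{\infty}\times_{\ad{}}G$, whence $P^{5}_{\!B}\LoopB{E_{i}}\simeq S^{3}\times_{\ad{}}P^{5}G$ with $P^{5}G$ the bar construction of the finite group $G$ under the adjoint action. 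Only then does the problem become the finite $\field_{2}$-linear system (\ref{eq:Equation 1}) --- further shrunk to (\ref{eq:Equation 2}) by the factorization $c=v\cupp c'$ --- that the program actually solves, together with the explicit cocycle identification $(e''_{\infty})^{\ast}(z\otimes z)=[c]$. Your alternative model of $P^{5}(\Loop{M})$ might in principle be made to work, but as written it is not carried out, and the machine verification you invoke verifies a different (and unjustified, on your account) model.
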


In view of Theorem \ref{thm:mainthm} and Proposition \ref{prop:multiplicativeformula}, it is sufficient to show the following lemma. 

\begin{lem}\label{lem:mainlem}
	$(e_{5}^{\double{M}})^{\ast}(z \otimes z) \!=\! 0$ in $H^{\ast}(P^{5}_{\!B}(\LoopB{\double{M}});\field_{2})$. \end{lem}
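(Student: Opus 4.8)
The overall strategy is to reduce the computation of $(e_5^{\double M})^{\ast}(z\otimes z)$ to a purely algebraic question about bar resolutions over $\field_2[G]$, and then to run the python program described in the Appendix. The first step is to replace the fibrewise space $\double M = E_{\id_M}$ by the Borel-type model: since $M = S^3/G$ and $S^3 \to M$ is a principal $G$-bundle with $G=Q_8$, the fibrewise pointed space $\double M$ is fibrewise homotopy equivalent to the Borel construction $EG\times_{\ad}G$ over $BG=EG/G$ (this is the point of the fibrewise method emphasised in the introduction: we only need the bar resolution of $G$, not of $G\times G$). Concretely, $P^{t}_{\!B}(\LoopB{\double M})$ has fibre $P^{t}\Loop(G)$ over each point of $BG$, and $e_t^{\double M}$ restricts on fibres to the James map $e_t : P^t\Loop G \to G$. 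So $(e_5^{\double M})^{\ast}$ on $H^{2m}=H^6$ is governed by the $G$-equivariant cochain map induced by $e_5$ together with the twisting coming from the adjoint action.

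The second step is to set up the chain-level model. Using the $\field_2$-coefficient bar resolution $B_\ast(\field_2[G])$ of $\field_2$ over $\field_2[G]$, one builds an explicit free resolution whose low-dimensional part computes $H^\ast(M;\field_2)$ (note $M$ is a $3$-manifold with $H^3(M;\field_2)\cong\field_2$, established in Theorem \ref{thm:cohomology}), and whose filtration by the "number of bars" models the cone/Ganea filtration $X = P^0_{\!B} \subset P^1_{\!B} \subset \cdots$. The class $z\otimes z\in H^6_B(\double M;\field_2)$ corresponds, under the direct-sum splitting $H^\ast(E;\field_2)\cong H^\ast(E_0;\field_2)\oplus H^\ast(D^{2m},S^{2m-1};\field_2)$ from the proof of Theorem \ref{thm:mainthm}, to the generator of the top cell $D^6$ of $M\times M$. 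The lemma asserts that this top class is killed after pulling back to the $5$-th stage; equivalently, by Proposition \ref{prop:multiplicativeformula}(\ref{prop:multiplicativeformula-2}), that $\wgt[B]{z\otimes z;\field_2}\ge 6$, which by Theorem \ref{thm:mainthm} forces $\catB{\double M}=\tc M = 6$. So what must be produced is an explicit cochain witnessing that the cocycle representing $(e_5^{\double M})^\ast(z\otimes z)$ is a coboundary in the bar complex truncated at bar-length $5$.

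The third step is the actual machine computation: implement the bar differential of $\field_2[Q_8]$ and the James map $e_5$ at the chain level, form the relevant finite-dimensional $\field_2$-cochain complex in degrees up to $6$ with filtration degree up to $5$, and solve the resulting linear system over $\field_2$ to check that the image of $z\otimes z$ bounds. The Appendix algorithm organises this so that only the bar resolution of $G$ (of manageable size) is needed, rather than that of $G\times G$; this is what makes the computation feasible. The main obstacle is exactly this last step: the resolution grows fast with bar-length, so even after the fibrewise reduction the linear algebra is large, and care is needed both to keep the matrices within available memory and to certify the output (a single explicit bounding cochain, once found, can be verified by hand-checkable substitution into the differential, which is how we guarantee correctness independently of the search). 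A secondary subtlety is correctly encoding the adjoint $G$-action and the attaching map $f$ of the top cell of $M\times M$ so that the chain-level $z\otimes z$ is the right class; this is where a sign/indexing error would be easy to make, though working over $\field_2$ removes the sign issue.
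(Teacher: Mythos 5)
Your overall strategy --- reduce to the adjoint Borel construction on the bar resolution of $G$ alone and then solve a linear system over $\field_2$ by machine --- is indeed the paper's strategy, but two of your reduction steps are wrong or missing as stated. First, $\double{M}$ is \emph{not} fibrewise homotopy equivalent to $EG\times_{\ad{}}G$: its fibre over a point of $M$ is $M$ itself, not the discrete set $G$, and likewise the fibre of $P^{t}_{\!B}(\LoopB{\double{M}})$ is $P^{t}\Loop{M}$, not $P^{t}G$. The correct statement (Lemma \ref{lem:core}) concerns the fibrewise \emph{loop} space: $\LoopB{\double{BG}}\simeq S^{\infty}\times_{\ad{}}G$ over $BG$. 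One must then introduce the intermediate fibrewise space $E_{i}=\double{BG}|_{M}$, whose fibrewise loop space is $S^{3}\times_{\ad{}}G$ over $M$ with Fujii's $G$-cell structure on $S^{3}$, use the surjectivity of $i^{*}:H^{*}(BG;\field_2)\to H^{*}(M;\field_2)$ to write $z\otimes z=(\id_{M}\times i)^{*}(z\otimes z)$, and transport the vanishing back along $\lambda: P^{5}_{\!B}\LoopB{\double{M}}\to P^{5}_{\!B}\LoopB{E_{i}}\simeq S^{3}\times_{\ad{}}P^{5}G$. Without $E_{i}$ the bar complex of $G$ does not compute anything about $\double{M}$ directly, so this link is a genuine gap in your reduction.

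Second, the linear system you propose to solve --- the degree-$6$ coboundary equation $\delta u=c$ in $C^{6}(S^{3}\times_{\ad{}}P^{5}G;\field_2)$ --- is exactly the one the authors report as exceeding their capacity; feasibility rests on factoring the representing cocycle as $c=v\cupp c'$ with $v$ a $1$-cocycle and $c'$ a $5$-cocycle, and solving instead $\delta u'=c'$ in $C^{5}(S^{3}\times_{\ad{}}P^{4}G;\field_2)$, after which $u=v\cupp u'$ solves the original equation (Proposition \ref{prop:eq2toeq1}). The truncation from $P^{5}G$ to $P^{4}G$ is not a harmless optimisation: the paper's closing Remark shows that the same equation $\delta u'=c'$ has \emph{no} solution in $C^{5}(S^{3}\times_{\ad{}}P^{5}G;\field_2)$, so a naive implementation of the degree-$5$ analogue of your plan would return ``no'' and prove nothing. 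You correctly identify the size of the computation as the main obstacle, but the cup-product factorisation and the precise choice of truncation are the missing ideas that overcome it.
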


\section{Proof of Theorem \ref{thm:cohomology} and Proposition \ref{prop:mainprop}}\label{sect:mainprop}

First, we introduce a modified Bar construction of a group $G$ as the realization of a nerve of the category of one object with morphism set $G$, where we only use face relations for the realization so as to obtain $C^{*}(G;\field_{2})=C^{*}(P^{\infty}G;\field_{2})$. Then by Segal \cite[Appendix A]{MR353298}, it follows that $P^{\infty}G \simeq K(G,1)$.
The cell-structure of the modified Bar construction is as follows:
$$
P^{\infty}G = \bigcup_{t \ge 0} P^{t}G,\quad P^{t}G= \underset{(g_{1},\ldots,g_{t}) \in \reduced{G}^{t}}{\textstyle\bigcup}\,\{g_{1}|g_{2}|\cdots|g_{t}\},
$$
where $\{g_{1}|\cdots|g_{t}\}$ is a $t$-simplex, $(g_{1},\ldots,g_{t}) \in \reduced{G}^{t}$, \ifx\undefined\unreduced $\reduced{G}=G \smallsetminus \{e\}$ \fi if $t \!\ge\! 1$, the unique $0$-simplex denoted by $\{\}$ if $t \!=\! 0$.
The boundary of $\{g_{1}|\cdots|g_{t}\}$, $t \!\ge\! 1$ is given by the following formulas:
\ifx\undefined\unreduced
\begin{align*}
\partial_{i}\{g_{1}|\cdots|g_{t}\} = \begin{cases}
\{g_{2}|\cdots|g_{t}\},&i=0,
\\
\{g_{1}|\cdots|g_{i-1}|g_{i}g_{i+1}|g_{i+2}|\cdots|g_{t}\},&0<i<t, \ g_{i}g_{i+1}\not=e,
\\
\{g_{1}|\cdots|g_{i-1}|g_{i+2}|\cdots|g_{t}\},&0<i<t, \ g_{i}g_{i+1}=e,
\\
\{g_{1}|\cdots|g_{t-1}\},&i=t.
\end{cases}
\end{align*}
\else
\begin{align*}
\partial_{i}\{g_{1}|\cdots|g_{t}\} = \begin{cases}
\{g_{2}|\cdots|g_{t}\},&i=0,
\\
\{g_{1}|\cdots|g_{i-1}|g_{i}g_{i+1}|g_{i+2}|\cdots|g_{t}\},&0<i<t, 
\\
\{g_{1}|\cdots|g_{t-1}\},&i=t.
\end{cases}
\end{align*}
\fi

The following is well-known (see \cite[(IV.2.10)]{MR2035696}) for $H^{*}(G;\field_{2})=H^{*}(P^{\infty}G;\field_{2})$.

\begin{fact}\label{thm:bar-resolution}
$H^{\ast}(P^{\infty}G;\field_2) = A^{*} \otimes \field_{2}[w]$, $A^{*}=\field_2[x,y]/(x^3,y^3,x^2{+}y^2{+}x{\cdot}y)$, $w \in H^{4}(P^{\infty}G;\field_{2})$ and $x, \,y \in H^{1}(P^{\infty}G;\field_{2})$, where $x\!=\![\alpha]$ and $y\!=\![\beta]$ are given by $\alpha\{a^{m}b^{n}\}=m$ and $\beta\{a^{m}b^{n}\}=n$ on $C_{1}(P^{\infty}G)$, respectively.
In particular, $H^{3}(P^{\infty}G;\field_2) \cong \field_2$ with a generator $z = x^{2}{\cdot}y = x{\cdot}y^{2}$.
\end{fact}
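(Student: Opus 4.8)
The plan is to use that $P^{\infty}G\simeq K(Q_{8},1)=BQ_{8}$ (as already noted via Segal) and that $Q_{8}$ acts freely on $S^{3}=\mathrm{Sp}(1)$, so that $Q_{8}$ has periodic cohomology of period $4$ (Cartan--Eilenberg, Swan). Concretely, there is a class $w\in H^{4}(P^{\infty}G;\field_{2})$ — the mod-$2$ reduction of a generator of $H^{4}(BQ_{8};\integral)\cong\integral/8$, equivalently the image of the generator of $H^{4}(\mathbb{HP}^{\infty};\field_{2})$ under $BQ_{8}\to B\mathrm{Sp}(1)=\mathbb{HP}^{\infty}$ — such that multiplication by $w$ is an isomorphism $H^{n}(P^{\infty}G;\field_{2})\xrightarrow{\ \cong\ }H^{n+4}(P^{\infty}G;\field_{2})$ for every $n\ge 1$. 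Hence it suffices to determine $H^{*}(P^{\infty}G;\field_{2})$ as a graded ring in degrees $\le 3$.

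For the groups I would read off the classical (co)homology of $Q_{8}$: $H^{0}=\field_{2}$; since $Q_{8}^{\mathrm{ab}}\cong(\integral/2)^{2}$, $H^{1}(P^{\infty}G;\field_{2})=\operatorname{Hom}(Q_{8},\field_{2})=\field_{2}\{x,y\}$ with $x=[\alpha]$, $y=[\beta]$ exactly as in the statement; and, using $H_{2}(Q_{8};\integral)=0$ (trivial Schur multiplier), $H_{3}(Q_{8};\integral)\cong\integral/8$ and $H_{4}(Q_{8};\integral)=0$ (periodicity), the universal coefficient theorem gives $\dim_{\field_{2}}H^{2}=\dim_{\field_{2}}H^{1}=2$ and $\dim_{\field_{2}}H^{3}=\dim_{\field_{2}}H^{4}=1$. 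Moreover $\operatorname{Sq}^{1}\colon H^{1}\to H^{2}$ is an isomorphism: as $H^{1}(P^{\infty}G;\integral)=0$ and $H^{2}(P^{\infty}G;\integral)\cong(\integral/2)^{2}$ is $2$-torsion of rank $2$, both maps in $H^{1}(-;\field_{2})\xrightarrow{\ \beta\ }H^{2}(-;\integral)\xrightarrow{\ \rho\ }H^{2}(-;\field_{2})$ are isomorphisms and $\operatorname{Sq}^{1}=\rho\,\beta$. In particular $\{x^{2},y^{2}\}$ is a basis of $H^{2}(P^{\infty}G;\field_{2})$, so $xy=\lambda x^{2}+\mu y^{2}$ for some $\lambda,\mu\in\field_{2}$.

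To pin down the relations I would use the automorphisms of $Q_{8}$. The automorphism interchanging $a$ and $b$ induces $x\leftrightarrow y$ and fixes $xy$, forcing $\lambda=\mu$; then the automorphism $a\mapsto a$, $b\mapsto ab$ induces $x\mapsto x+y$, $y\mapsto y$, and applying it to $xy=\lambda(x^{2}+y^{2})$ forces $\lambda=1$, i.e. $x^{2}+xy+y^{2}=0$ in $H^{2}$. Multiplying this by $x$ and by $y$, and using that the $a\leftrightarrow b$ automorphism is the identity on the one-dimensional $H^{3}$ (hence sends $x^{2}y$ to $xy^{2}$), gives $x^{2}y=xy^{2}$ and $x^{3}=y^{3}=0$. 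It remains to check $x^{2}y\ne 0$; for this I would pass to the quaternionic space form $M=S^{3}/Q_{8}$. The classifying map $M\to P^{\infty}G$ has homotopy fibre $\widetilde M=S^{3}$, which is $2$-connected, so it induces a ring isomorphism on $H^{*}(-;\field_{2})$ in degrees $\le 3$; since $M$ is a closed orientable $3$-manifold, Poincar\'e duality makes the cup-product pairing $H^{1}(M;\field_{2})\otimes H^{2}(M;\field_{2})\to H^{3}(M;\field_{2})\cong\field_{2}$ perfect, and in the bases $\{x,y\}$ and $\{x^{2},y^{2}\}$ its matrix is $\left(\begin{smallmatrix}x^{3}&xy^{2}\\x^{2}y&y^{3}\end{smallmatrix}\right)=\left(\begin{smallmatrix}0&c\\c&0\end{smallmatrix}\right)$ with $c=\langle x^{2}y,[M]\rangle$, so non-degeneracy forces $c=1$ and hence $x^{2}y=xy^{2}\ne 0$. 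Therefore the subring of $H^{*}(P^{\infty}G;\field_{2})$ generated by $H^{1}$ is exactly $A^{*}=\field_{2}[x,y]/(x^{3},y^{3},x^{2}{+}xy{+}y^{2})$ (a $6$-dimensional algebra concentrated in degrees $0$--$3$, with graded dimensions $1,2,2,1$ matching the first step); lifting $x,y$ to degree-$1$ classes — which automatically satisfy the relations of $A^{*}$, the ideal $(w)$ being trivial below degree $4$ — and comparing Poincar\'e series then yields $H^{*}(P^{\infty}G;\field_{2})=A^{*}\otimes\field_{2}[w]$, with $z=x^{2}y=xy^{2}$ generating $H^{3}\cong\field_{2}$.

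The step I expect to be the genuine obstacle is pinning down the two relations: the quadratic one $x^{2}+xy+y^{2}=0$ (the form distinguishing $Q_{8}$ from $D_{8}$ and from the abelian groups of order $8$) and the non-vanishing $x^{2}y=xy^{2}\ne 0$ in $H^{3}$. Neither can be seen by restriction to subgroups, since $Q_{8}$ has no elementary abelian subgroup of rank $2$ and every proper subgroup has even index: $x^{2}$, $xy$, $y^{2}$ all restrict to $0$ on each cyclic subgroup, and the generator of $H^{3}$ restricts to $0$ on every proper subgroup. The two inputs that replace such a detection argument are the (outer) automorphism group of $Q_{8}$, large enough to force the quadratic relation, and Poincar\'e duality on the space form $S^{3}/Q_{8}$, which forces the cubic one. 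This is, of course, the classical computation recorded in \cite[(IV.2.10)]{MR2035696}; the above is one way to organise its proof.
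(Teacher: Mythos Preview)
Your argument is correct. Note, however, that the paper does not actually prove this statement: it is recorded as a \emph{Fact} with a citation to \cite[(IV.2.10)]{MR2035696}, so there is no proof in the paper to compare against. What you have written is a clean, self-contained derivation of a result the authors simply quote.

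A few remarks on your approach. The two ingredients you isolate --- the $\operatorname{Out}(Q_{8})$-action to force the quadratic relation $x^{2}+xy+y^{2}=0$, and Poincar\'e duality on $M=S^{3}/Q_{8}$ to force $x^{2}y\ne 0$ --- are exactly the right substitutes for a detection argument, and your explanation of why restriction to subgroups cannot work here is accurate and worth keeping. One small redundancy: once you have $x^{2}+xy+y^{2}=0$ and $x^{3}=0$, multiplying the quadratic relation by $y$ already gives $y^{3}=0$, so the presentation $\field_{2}[x,y]/(x^{3},\,x^{2}{+}xy{+}y^{2})$ suffices; listing $y^{3}$ as a separate relation is harmless but not needed. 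Also, the step ``comparing Poincar\'e series'' at the end deserves one more sentence: you have a ring map $A^{*}\otimes\field_{2}[w]\to H^{*}(P^{\infty}G;\field_{2})$ because the relations defining $A^{*}$ live in degrees $\le 3$ where $w$ contributes nothing, and it is surjective since $A^{*}$ already hits everything in degrees $\le 3$ while cup with $w$ is an isomorphism $H^{n}\to H^{n+4}$ for $n\ge 0$; equality of Poincar\'e series then gives injectivity.
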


Second, we construct a cell complex $BG \simeq K(G,1)$ as a minimal complex realising the group cohomology $H^{*}(G;\field_{2})$ defined by K.~Fujii \cite{MR334184}, which is actually much smaller than $P^{\infty}G$:
taking quotient from $S^{4t+3}= \{(x_0,\dots ,x_{t}) \in \quaternion^{t+1} \midvert \vert{x_0}\vert^{2} {+} \cdots {+} \vert{x_{t}}\vert^{2} \!=\! 1\}$ by the following action of $G \subset \mathrm{Sp}(1)$, Fujii defined a series of manifolds $N^{t}(2)=S^{4t+3}/G$, $t \!\ge\! 0$:
\[g(x_0,\dots ,x_{t}) = (gx_0,\dots ,gx_{t})\]
for $g \in G \subset \mathrm{Sp}(1)$ and $(x_0,\dots ,x_{t}) \in S^{4t+3} \subset \quaternion^{t+1}$.

Since $\bigcup_{t=0}^{\infty} S^{4t+3}$ is contractible, the canonical projection $p_{\infty} : S^{\infty} = \bigcup_{t=0}^{\infty} S^{4t+3} \to \bigcup_{t=0}^{\infty}N^{t}(2) =:BG$ is a universal principal $G$-bundle.
This tells us that a canonical inclusion $i : M=N^{0}(2) \hookrightarrow BG$ gives a classifying map of the principal $G$-bundle $q : S^{3} \twoheadrightarrow M$, since $q=q_{\infty}|_{S^{3}}$.

\smallskip

We know the following finite cell decomposition of $N^t(2)$, $t \ge 0$.

\begin{fact}[Fujii {\cite[Page 253--254]{MR334184}}]\label{boundaryformula}
$S^{4t+3}$ is a $G$-cell complex with the cell decomposition
$$
\left\{ge^{4k}, \,ge^{4k+1}_{1}, \,ge^{4k+1}_{2}, \,ge^{4k+2}_{1}, \,ge^{4k+2}_{2}, \,ge^{4k+3} \mid 0 \!\le\! k \!\le\! t, \ g \in G \right\}
$$ 
whose boundary formulas in the cellular chain complex with coefficients in $\field_{2}$ is given as follows:
\begin{align*}
\partial &e^{0} = 0, \quad\partial
e^{4k+4} = \sum_{g\in G}ge^{4k+3}\, \ (k \ge 1), \\
\partial &e^{4k+1}_{1} = (a{-}1)e^{4k}, \quad\partial e^{4k+1}_{2} = (b{-}1)e^{4k}, \\
\partial &e^{4k+2}_{1} = (a {+} 1)e^{4k+1}_{1} - (b{+}1)e^{4k+1}_{2}, \quad%
\partial e^{4k+2}_{2} = (ab{+}1)e^{4k+1}_{1} + (a{-}1)e^{4k+1}_{2}, \\
\partial &e^{4k+3} = (a{-}1)e^{4k+2}_{1} - (ab{-}1)e^{4k+2}_{2}.
\end{align*}
\end{fact}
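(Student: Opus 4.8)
The statement is due to Fujii; I sketch how one would establish it. Since $G=Q_8$ is a $2$-group, the group algebra $\field_2[G]$ is local with maximal ideal the augmentation ideal $I=\Ker(\epsilon\colon\field_2[G]\to\field_2)$; consequently the minimal free resolution of $\field_2$ over $\field_2[G]$ is unique up to isomorphism, and every bounded complex of finitely generated free $\field_2[G]$-modules is, over $\field_2[G]$, chain homotopy equivalent to a unique minimal one — one whose differentials carry each term into $I$ times the previous. So the plan is: (i) identify this minimal resolution $F_{\ast}\to\field_2$ explicitly; (ii) show that the standard free action of $G\subset\mathrm{Sp}(1)$ on $S^{4t+3}\subset\H^{t+1}$ admits a $G$-CW structure whose cellular chain complex realizes the truncation of $F_{\ast}$ in degrees $\le 4t{+}3$; (iii) read the boundary formulas off (i).

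For (i), take the complex displayed in the statement reduced modulo $2$ — so $C_{4k}=C_{4k+3}=\field_2[G]$, $C_{4k+1}=C_{4k+2}=\field_2[G]^2$ with the indicated $\field_2[G]$-bases, and $a{-}1$, $-(b{+}1)$, $ab{-}1$ become $a{+}1$, $b{+}1$, $ab{+}1$ — and verify it is a resolution of $\field_2$. That $\partial^2=0$ is a short computation from the identities $b^2=a^2$, $aba=b$, $(ab)^2=a^2$ in $Q_8$ and $(\sum_{g\in G}g)h=\sum_{g\in G}g$; for example the $e^{4k}$-coefficient of $\partial^2 e^{4k+2}_1$ is $(a{+}1)^2+(b{+}1)^2=a^2+b^2=0$. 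Exactness, and the observation that every matrix entry lies in $I$ so that the resolution is minimal, is a finite-dimensional check in the $8$-dimensional algebra $\field_2[G]$; in particular $\Ker(\partial\colon C_{4k+3}\to C_{4k+2})=\field_2\cdot\sum_{g\in G}g$, a trivial $\field_2[G]$-submodule, which is exactly what makes the top homology of a $(4t{+}3)$-truncation equal to $\field_2$. (This is the classical period-$4$ resolution of the quaternion group, so one may instead simply cite it.) By uniqueness, this complex is $F_{\ast}$.

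For (ii)–(iii): the $G$-action on $S^{4t+3}$ is free and orientation-preserving, so $N^{t}(2)=S^{4t+3}/G$ is a closed $(4t{+}3)$-manifold; from $H^{\ast}(BG;\field_2)=A^{\ast}\otimes\field_2[w]$ (Fact~\ref{thm:bar-resolution}) and the Gysin sequence of $S^{4t+3}\to N^{t}(2)\to BG$ — in which multiplication by the Euler class $w^{t+1}$ is injective because $H^{\ast}(BG;\field_2)$ is free over $\field_2[w]$ — one gets $\dim_{\field_2}H_{\ast}(N^{t}(2);\field_2)=6(t{+}1)$, the Betti numbers being $1,2,2,1$ in each block of four consecutive degrees. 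Hence $N^{t}(2)$ admits a CW (equivalently handle) decomposition with exactly these $6(t{+}1)$ cells, which can be taken $G$-equivariantly on $S^{4t+3}$ so as to have one $G$-orbit of each of $e^{4k},e^{4k+1}_1,e^{4k+1}_2,e^{4k+2}_1,e^{4k+2}_2,e^{4k+3}$, $0\le k\le t$ — this is precisely Fujii's construction from an explicit fundamental domain. Its cellular chain complex $C_{\ast}(S^{4t+3};\field_2)$ is then a complex of free $\field_2[G]$-modules with these ranks, with $H_0=H_{4t+3}=\field_2$ (the latter trivial, by orientation-preservation) and no other homology, and with all differentials into $I$; by the uniqueness in (i) it must coincide with the $(4t{+}3)$-truncation of $F_{\ast}$, which is the asserted boundary data.

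The genuine obstacle is the claim in (ii) that the \emph{given} sphere $S^{4t+3}$ itself — not merely some homotopy-equivalent space built by realizing $F_{\ast}$ cell by cell — carries a $G$-CW structure with precisely $6(t{+}1)$ orbit-cells: realizing a resolution abstractly yields the correct homology but can produce a space with too large a fundamental group, so one genuinely needs Fujii's explicit fundamental-domain description (or the theory of spherical space forms: a space form of a group of period $4$ admits a perfect handle decomposition, the needed cancellations being available). Granting that, the remaining ingredients — the finite algebra of (i) and the reduction via local-ring uniqueness — are routine.
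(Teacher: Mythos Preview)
The paper does not prove this statement: it is recorded as a \emph{Fact} and simply cited to Fujii \cite[Page 253--254]{MR334184}, with no argument given. So there is nothing in the paper to compare your sketch against beyond the bare citation.

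Your sketch is a sensible homological-algebra reorganization of the result --- verifying that the displayed differentials give a (minimal, periodic) free $\field_2[G]$-resolution of $\field_2$, computing the $\field_2$-Betti numbers of $N^t(2)$ via the Serre spectral sequence of $S^{4t+3}\to N^t(2)\to BG$, and then invoking uniqueness of minimal complexes over the local ring $\field_2[Q_8]$ --- and the algebra you outline is correct. But two points deserve emphasis. First, as you yourself flag, the geometric content of the Fact is the existence of a $G$-CW structure on the \emph{actual} sphere $S^{4t+3}$ with exactly $6(t{+}1)$ orbit-cells; your Betti-number count shows such a structure would be minimal if it exists, but does not produce it, and you correctly defer this to Fujii's explicit fundamental-domain construction. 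Second, the uniqueness argument only pins down the cellular chain complex \emph{up to isomorphism of $\field_2[G]$-complexes}: it tells you some choice of $\field_2[G]$-bases for the cellular chain groups makes the differentials look like the displayed matrices, but not that the geometric cells $e^{4k+1}_1$, $e^{4k+1}_2$, etc.\ themselves satisfy exactly those formulas (a change of basis inside each rank-$2$ free module would alter the matrices without changing the isomorphism class). Fujii's approach is the opposite: he writes down the cells as explicit subsets of $S^{4t+3}$ and computes the attaching maps directly, which is what fixes the formulas on the nose. Your framework explains \emph{why} such formulas are forced (up to automorphism) once the minimal cell count is granted; Fujii's construction is what actually establishes the Fact as stated.
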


The above cell decomposition of $S^{4t+3}$ induces a cell decompostion of $N^{t}(2)$, $t \!\ge\! 0$ as follows.

\begin{fact}[Fujii {\cite[Lemma 2.1]{MR334184}}]\label{dec}
The manifold $N^{t}(2)$ can be decomposed as the finite cell complex whose cells are given by 
$$
\left\{e^{4k}, \,e^{4k+1}_1, \,e^{4k+1}_2, \,e^{4k+2}_1, \,e^{4k+2}_2, \,e^{4k+3} \mid 0\!\le\!k \!\le\!t\right\}
$$
	with the following boundary formulas in the cellular chain complex with coefficients in $\field_{2}$ associated with the cell decomposition of $N^{t}(2)$, $t \!\ge\! 0$, above:
\begin{align*}
&\partial e^{0} = 0,\quad \partial e^{4k} = 2^{3}e^{4k-1}_1\, \ (k \ge 1),\\
&\partial e^{4k+1}_1 = 0,\quad \partial e^{4k+1}_2 = 0,\\
&\partial e^{4k+2}_1 = 2e^{4k+1}_1 - 2e^{4k+1}_2, \quad \partial e^{4k+2}_2 = 2e^{4k+1}_1,\\
&\partial e^{4k+3} = 0.
\end{align*}
\end{fact}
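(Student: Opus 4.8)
The plan is to deduce Fact~\ref{dec} from the $G$-equivariant cell structure of Fact~\ref{boundaryformula} by passing to the quotient under the free $G$-action. Since $G$ acts freely on $S^{4t+3}$ and the structure of Fact~\ref{boundaryformula} is a finite free $G$-CW structure — each equivariant cell is an orbit $\{g\,e^{j}\}_{g\in G}$, a copy of $G\times D^{j}$ — the quotient map $S^{4t+3}\twoheadrightarrow N^{t}(2)$ is a covering and the cell structure descends to a finite CW structure on $N^{t}(2)$ with exactly one cell $e^{j}$ for each $G$-orbit of equivariant cells. This produces precisely the cell list $\{e^{4k},e^{4k+1}_{1},e^{4k+1}_{2},e^{4k+2}_{1},e^{4k+2}_{2},e^{4k+3}\mid 0\le k\le t\}$; in particular $N^{t}(2)$ is a finite $(4t{+}3)$-dimensional CW complex (it is a closed manifold), carrying $6(t{+}1)$ cells against the $48(t{+}1)$ cells of $S^{4t+3}$.

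First I would record the standard identification of chain complexes. The cellular chain complex $C_{*}(S^{4t+3})$ with the boundary of Fact~\ref{boundaryformula} is a complex of free $\integral[G]$-modules on the basis $\{e^{j}\}$, and for a free $G$-CW complex the cellular chain complex of the quotient is $C_{*}(N^{t}(2))\cong\integral\otimes_{\integral[G]}C_{*}(S^{4t+3})$. Hence the boundary operator in $C_{*}(N^{t}(2))$ is obtained from Fujii's equivariant boundary operator simply by applying the augmentation $\epsilon\colon\integral[G]\to\integral$, $\epsilon(g)=1$, to each coefficient.

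Then the six boundary formulas of Fact~\ref{dec} fall out by applying $\epsilon$ line by line. Each coefficient of the form $g-1$ (with $g\in\{a,b,ab\}$) dies, so $\partial e^{4k+1}_{1}=\partial e^{4k+1}_{2}=0$ and $\partial e^{4k+3}=0$; each coefficient $g+1$ becomes $2$, so $\partial e^{4k+2}_{1}=2e^{4k+1}_{1}-2e^{4k+1}_{2}$ and $\partial e^{4k+2}_{2}=2e^{4k+1}_{1}$; the norm element $\sum_{g\in G}g$ becomes $|G|=2^{3}$, so $\partial e^{4k+4}=2^{3}e^{4k+3}$, which under the reindexing $4k{+}4=4(k{+}1)$, $4k{+}3=4(k{+}1){-}1$ is the stated formula $\partial e^{4k}=2^{3}e^{4k-1}_{1}$ for $k\ge1$; and $\partial e^{0}=0$ is trivial. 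Reduced mod $2$ all differentials vanish, recovering the $\field_{2}$-additive structure underlying Fact~\ref{thm:bar-resolution}.

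The argument is essentially bookkeeping, so there is no genuine obstacle; the only points demanding attention are (i) that the norm-element boundary $\partial e^{4k+4}=\sum_{g\in G}g\,e^{4k+3}$ is the one term whose image under $\epsilon$ is neither $0$ nor $\pm2$ but the order $|G|=8$, so the degree-$4k$ boundary downstairs carries the coefficient $2^{3}$, and the accompanying index shift must be tracked carefully (the lowest case $\partial e^{4}=\sum_{g}g\,e^{3}$ in $S^{7}$ should also be checked so that $\partial e^{4}=2^{3}e^{3}$); and (ii) the coefficient-ring bookkeeping — Fujii's signs are meaningful integrally but irrelevant mod $2$, so one should be explicit that the displayed formulas are the integral cellular boundaries whose mod $2$ reductions are what is used in the sequel. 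A consistency check is that $\partial^{2}=0$ downstairs is inherited from $\partial^{2}=0$ upstairs via $\epsilon\otimes\id$.
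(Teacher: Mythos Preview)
Your argument is correct. The paper does not give its own proof of this statement: it is recorded as a \textbf{Fact} and attributed to Fujii \cite[Lemma~2.1]{MR334184}, with no argument supplied. Your deduction from Fact~\ref{boundaryformula}---identifying $C_{*}(N^{t}(2))\cong\integral\otimes_{\integral[G]}C_{*}(S^{4t+3})$ for the free $G$-CW complex and applying the augmentation $\epsilon:\integral[G]\to\integral$ coefficientwise---is the standard mechanism, and your line-by-line check of the six boundary types is accurate. The caution you raise in~(i) is well placed: Fact~\ref{boundaryformula} as printed restricts the norm formula $\partial e^{4k+4}=\sum_{g\in G}g\,e^{4k+3}$ to $k\ge 1$, which would omit $\partial e^{4}$, whereas Fact~\ref{dec} (indexed as $\partial e^{4k}$ for $k\ge 1$) does include it; this is evidently a typographical slip upstream, and checking the bottom case directly, as you suggest, closes the gap.
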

We remark that the naturality of the above decomposition implies that $N^{k}(2)$ is a sub-complex of $N^{t}(2)$ if $0 \le k \le t$, and hence $M$ is a sub-complex of $BG$.

Then, we describe the cohomology groups of $N^{t}(2)$ with coefficients in $\field_{2}$.

\begin{fact}\label{prop:homologyandcohomology}
The cohomology groups of $BG = \bigcup_{t=0}^{\infty}N^{t}(2)$ are given as follows:
\begin{align*}
H^k(BG;\field_{2}) &=
\begin{cases}
\field_2 \oplus \field_2 & k\equiv 1,2 \mod 4,\\
\field_2 & k\equiv 3,0 \mod 4.
\end{cases}
\end{align*}
\end{fact}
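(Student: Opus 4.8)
The plan is to read off $H^{*}(BG;\field_{2})$ directly from the cellular cochain complex, using the cell decomposition and boundary formulas furnished by Fact \ref{dec}. The key point I would establish first is that, reduced modulo $2$, every boundary operator in Fact \ref{dec} vanishes: the coefficients $2$ and $2^{3}$ appearing in $\partial e^{4k}$, $\partial e^{4k+2}_{1}$ and $\partial e^{4k+2}_{2}$ are zero in $\field_{2}$, while $\partial e^{0}$, $\partial e^{4k+1}_{1}$, $\partial e^{4k+1}_{2}$ and $\partial e^{4k+3}$ are already zero. Hence the $\field_{2}$-cellular chain complex of each $N^{t}(2)$ has trivial differential, and so does the dual cochain complex.

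Next, I would assemble these finite stages into $BG$. By the remark following Fact \ref{dec}, $N^{k}(2)$ is a subcomplex of $N^{t}(2)$ for $k\le t$, so $BG=\bigcup_{t}N^{t}(2)$ is a CW-complex with cell set $\{e^{4k},e^{4k+1}_{1},e^{4k+1}_{2},e^{4k+2}_{1},e^{4k+2}_{2},e^{4k+3}\mid k\ge 0\}$, having at most two cells in each dimension; its $\field_{2}$-cellular cochain complex again carries the zero differential. Working with the cell structure of $BG$ itself sidesteps any $\lim^{1}$ bookkeeping, but if one prefers the colimit picture, $H^{n}(N^{t}(2);\field_{2})$ is already independent of $t$ once $4t+3>n$, so the relevant Milnor sequence degenerates.

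Finally, I would count $n$-cells: one cell $e^{n}$ for $n\equiv 0\bmod 4$, one cell $e^{n}$ for $n\equiv 3\bmod 4$, the pair $e^{n}_{1},e^{n}_{2}$ for $n\equiv 1\bmod 4$, and the pair $e^{n}_{1},e^{n}_{2}$ for $n\equiv 2\bmod 4$. Since the cochain differential vanishes, $H^{n}(BG;\field_{2})$ is exactly the degree-$n$ cochain group, namely $\field_{2}\oplus\field_{2}$ for $n\equiv 1,2\bmod 4$ and $\field_{2}$ for $n\equiv 0,3\bmod 4$; equivalently one may compute $H_{*}(BG;\field_{2})$ in the same way and invoke universal coefficients over the field $\field_{2}$, which is harmless since each chain group is finite-dimensional. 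There is no serious obstacle in this argument --- the only step deserving any care, and thus the closest thing to an obstacle, is the mod-$2$ vanishing of the boundary operators noted above, which is precisely what makes the naive cell count compute the (co)homology on the nose.
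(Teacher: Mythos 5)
Your argument is correct and is exactly the derivation the paper leaves implicit: Fact \ref{prop:homologyandcohomology} is stated without proof immediately after Fujii's cell decomposition (Fact \ref{dec}), and is meant to be read off precisely as you do, by observing that the coefficients $2$ and $2^{3}$ in the boundary formulas vanish in $\field_{2}$ so that the cellular (co)chain complex has zero differential and the cell count in each dimension gives the answer. Your remarks on assembling the $N^{t}(2)$ into $BG$ (stabilization in each degree, no $\lim^{1}$ issue) are sound and only make explicit what the paper's observation that $N^{k}(2)$ is a subcomplex of $N^{t}(2)$ already provides.
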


Since $BG \simeq P^{\infty}G$, we obtain its multiplicative structure as $H^{\ast}(BG;\integral/(8)) \cong A^{*} \otimes \field_{2}[w]$.

\begin{prop}\label{prop:cohomologyofM}
$H^{\ast}(M;\field_2) \cong A^{*}$.
Further, we have $H^{3}(M;\field_2) \cong \field_2$ with $z = x^{2}{\cdot}y = x{\cdot}y^{2}$.
\end{prop}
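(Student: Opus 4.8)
The plan is to compare $M=N^{0}(2)$ with the infinite complex $BG=\bigcup_{t\ge0}N^{t}(2)$ via the skeletal inclusion $i:M\hookrightarrow BG$ noted just after Fact \ref{dec}, and to transport the ring structure $H^{*}(BG;\field_{2})\cong A^{*}\otimes\field_{2}[w]$ down to $M$. First I would read off the additive structure of $H^{*}(M;\field_{2})$ directly from Fact \ref{dec} with $t=0$: the cells of $M$ are $e^{0},e^{1}_{1},e^{1}_{2},e^{2}_{1},e^{2}_{2},e^{3}$, and every boundary coefficient that occurs is even, so the $\field_{2}$-cellular chain complex of $M$ has zero differential. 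Hence $H_{k}(M;\field_{2})$, and dually $H^{k}(M;\field_{2})$, has dimension $1,2,2,1$ for $k=0,1,2,3$ and vanishes for $k\ge4$ since $M$ has dimension $3$. These dimensions agree with those of $A^{*}=\field_{2}[x,y]/(x^{3},y^{3},x^{2}{+}y^{2}{+}x{\cdot}y)$, which is concentrated in degrees $0$ through $3$: in degree $2$ the relation makes it $2$-dimensional, in degree $3$ one has $x^{2}{\cdot}y=x{\cdot}y^{2}$ spanning a line while $x^{3}=y^{3}=0$, and in degree $4$ one computes $x^{2}{\cdot}y^{2}=(y^{2}{+}x{\cdot}y)y^{2}=0$.

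Next I would use that, comparing Fact \ref{dec} for $N^{0}(2)$ with the cell structure of $BG$, the cells of $BG$ of dimension at most $3$ are exactly the cells of $M$; thus $M$ is the $3$-skeleton of $BG$ and $i:M\hookrightarrow BG$ is a skeletal inclusion. Therefore $i^{*}:H^{k}(BG;\field_{2})\to H^{k}(M;\field_{2})$ is an isomorphism for $k\le2$ and a monomorphism for $k=3$. Since both groups are one-dimensional in degree $3$ by Fact \ref{prop:homologyandcohomology} (equivalently Fact \ref{thm:bar-resolution}) together with the previous step, $i^{*}$ is in fact an isomorphism in every degree $\le3$, while $H^{k}(M;\field_{2})=0$ for $k\ge4$.

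Finally, since $H^{*}(BG;\field_{2})\cong A^{*}\otimes\field_{2}[w]$ with $\deg w=4$, the part of $H^{*}(BG;\field_{2})$ in degrees $\le3$ is precisely (the image of) $A^{*}$, the factor $\field_{2}[w]$ contributing nothing below degree $4$. As $i^{*}$ is a ring homomorphism which restricts to an isomorphism of graded vector spaces $A^{*}\xrightarrow{\ \cong\ }H^{*}(M;\field_{2})$, it is a ring isomorphism; writing $x,y$ for $i^{*}(x),i^{*}(y)$ we conclude $H^{*}(M;\field_{2})\cong A^{*}$ and $H^{3}(M;\field_{2})\cong\field_{2}$ generated by $z=x^{2}{\cdot}y=x{\cdot}y^{2}$. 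The only point requiring care is the isomorphism in degree $3$, which is not supplied by skeletal naturality alone but is forced by the dimension count of the first step; once that is in hand the multiplicative conclusion is formal, precisely because $A^{*}$ lives entirely in the range where $i^{*}$ is already known to be an isomorphism.
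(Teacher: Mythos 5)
Your proposal is correct, and it proves the proposition by a genuinely different (and more elementary) route than the paper. The paper runs the Serre spectral sequence of the Borel fibration $S^{3} \to M \xrightarrow{i} BG$: after checking the fibration is simple, the only possibly nontrivial differential is $d_{4}$, and since $H^{4}(M;\field_{2})=0$ the class $w$ cannot survive, forcing $d_{4}(s_{3})=w$; this kills $\field_{2}[w]$ and leaves $E_{5}=E_{\infty}=A^{*}$ concentrated on the base row, whence the ring isomorphism via the edge homomorphism. You instead exploit that, by Fact \ref{dec}, $M=N^{0}(2)$ is precisely the $3$-skeleton of $BG$, so $i^{*}$ is an isomorphism in degrees $\le 2$ and injective in degree $3$, and the mod $2$ cellular differential of $M$ vanishes, so a dimension count ($\field_{2}$ on both sides in degree $3$) upgrades injectivity to an isomorphism; since $A^{*}$ is exactly the part of $H^{*}(BG;\field_{2})\cong A^{*}\otimes\field_{2}[w]$ in degrees $\le 3$ and $H^{\ge 4}(M;\field_{2})=0$, the ring structure transports formally along the ring map $i^{*}$. (In fact, in this case even the degree-$3$ step is immediate, since $\partial e^{4}=2^{3}e^{3}_{1}\equiv 0 \bmod 2$ makes $i^{*}$ the identity on cochains in degree $3$.) Your argument avoids the spectral sequence and the orientation/simplicity discussion entirely; what the paper's argument buys in exchange is the identification $d_{4}(s_{3})=w$, i.e.\ a structural reason why $w$ restricts trivially, which adapts to the higher space forms $N^{t}(2)$ where the top-degree skeleton argument no longer applies verbatim. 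Both proofs rest on the same external input, namely the known ring $H^{*}(BG;\field_{2})$ from Fact \ref{thm:bar-resolution}.
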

\begin{proof}
The additive structures are obvious, and so we show the multiplicative structure for $\field_{2}$-coefficient.
Since $\pi_{1}(M) \cong G$, there is a classifying map $i : M \hookrightarrow BG$ inducing the following fibration with fibre $S^{3}$, where the map $S^{3} \twoheadrightarrow M$ is the universal covering of $M$:
\[S^3 \twoheadrightarrow M \overset{i}\hookrightarrow BG.\]
Since the action of $G$ preserves the orientation, the fibration is simple, and hence the $E_{2}$-term of the Serre spectral sequence with coefficients in $\field_{2}$ for the above fibration is described as follows:
$$
E_{2}^{p,q} \cong H^{p}(BG;\field_{2}) \otimes H^{q}(S^{3};\field_{2}) \cong A^{*} \otimes \field_{2}[w] \otimes \Lambda(s_{3}),
$$
where $s_{3}$ is the generator of $H^{3}(S^{3};\field_{2}) \cong \field_{2}$.
Then $E_{r}^{*,*}$, $r \!\ge\! 2$ has no non-trivial differential other than $d_4$ in the following diagram, where $H^{p} = H^{p}(BG;\field_{2})$.
\def\gap{20}
\newcount\xLine \xLine=\gap
\def\xcoodinate#1{
\put(\number\xLine,0){$#1$}\advance\xLine by \gap
}
\newcount\yLine \yLine=\gap
\def\ycoodinate#1{
\put(0,\number\yLine){$#1$}\advance\yLine by \gap
}
\def\xycoodinate(#1,#2)#3{
\xLine=\gap \multiply\xLine by #1
\yLine=\gap \multiply\yLine by #2
\advance\xLine by \gap\advance\yLine by \gap
\put(\number\xLine,\number\yLine){\makebox(6,6)[cc]{\small $#3$}}
}
\newcount\haba \haba=\gap
\multiply\haba by 11 \divide\haba by 2
\newcount\takasa \takasa=\gap \multiply\takasa by 9 \divide\takasa by 2
\par\vspace{1.5ex}
\begin{center}
\begin{picture}(\number\haba,\number\takasa)(0,0)
\put(0,10)	{\line(1,0){\number\haba}}
\put(10,0)	{\line(0,1){\number\takasa}}
\xcoodinate{0}\xcoodinate{1}\xcoodinate{2}\xcoodinate{3}\xcoodinate{4}
\ycoodinate{0}\ycoodinate{}\ycoodinate{}\ycoodinate{3}
\xycoodinate(0,0){\field_{2}1}
\xycoodinate(1,0){H^{1}}
\xycoodinate(2,0){H^{2}}
\xycoodinate(3,0){H^{3}}
\xycoodinate(4,0){\field_{2}w}
\xycoodinate(0,3){\field_{2}s_{3}}
\put(31,76){\vector(4,-3){60}}
\put(62,56){\small$d_{4}$}
\put(55,46){\small$\cong$}
\end{picture}
\end{center}
Since $w \in E_{2}^{4,1}$ can not survive in $E_{\infty}$-term, we have $w \in \Img d_{4}$, and hence $d_4(s_3) = w$.
Thus $H^{\ast}(M;\field_2)$ is isomorphic to $E_{5}^{*,*}=A^{*}$ and we have done.
\end{proof}

\begin{notation}
We denote by $\zeta$ and $1$ the cochains dual to $e^{3} \in C_{3}(M)$ and $e^{0} \in C_{0}(M)$, respectively, in $\hom(C^{*}(M);\field_{2})$.
Since $S^{3} = \bigcup\{\,g\sigma \,\vert\, g \!\in\! G\}$ where $\sigma$ runs over all cells of $M$, $p^{*}\zeta(g\sigma)=1$ if and only if $\sigma=e^{3}$ for any $g \in G$ and $p^{*}1(g\sigma)=1$ if and only if $\sigma=\ast$ for any $g \in G$.
\end{notation}

\begin{Claim}[Theorem \ref{thm:cohomology}]
Since $z=x^{2}{\cdot}y\not=0$ in $H^{*}(M;\field_{2})$, we have $\wgt{z}=\cupl{z}=3$, and hence we obtain the following.\par
\hfil$\cupl{M} = \wgt{M} = \cat{M} = \dim{M} = 3.$\hfil 
\end{Claim}

Let us denote $X =  x \otimes 1 + 1 \otimes x$ and $Y = y \otimes 1 + 1 \otimes y$, respectively, which are in the zero-divisors-ideal $\ker \Delta^{*} = H_{B}^{*}(\double{M};\field_{2})$.
Thus, we have $X^3{\cdot}Y^2 \neq 0$ and hence we obtain the following proposition.

\begin{Claim}[Proposition \ref{prop:mainprop}]
	$5 \le \cuplB{\double{M}}\le \catBb{\double{M}} = \tc{M} \le 2\cat{M} = 6$.
\end{Claim}

\section{Proof of Lemma \ref{lem:mainlem}}\label{sect:mainlem}

In general, $P^{t}_{\!B}\LoopB{\double{M}}$, \,$t \!\ge\! 0$,  the fibrewise projective $t$-space of the fiberwise loop space of $\double{M}$ has a misterious structure, while the following lemma is known (cf. \cite[Proposition 2.1]{MR3975098}).

\begin{lem}\label{lem:core}
There is a fibrewise homotopy equivalence $\tilde{f}_{0} : \LoopB{\double{BG}} \to S^{\infty}\times_{\ad{}}G$, such that
\begin{enumerate}
\item
$\tilde{f}_{0} : \LoopB{\double{BG}} \to S^{\infty}\times_{\ad{}}G$ is a fibrewise $A_{\infty}$-map.
\item
$\tilde{f}_{0}$ induces a fibrewise homotopy equivalence $\tilde{f}$ $:$ $P^{t}_{\!B}\LoopB{\double{BG}} \to S^{\infty}\times_{\ad{}}P^{t}G$, where $P^{t}_{\!B}\LoopB{\double{BG}}$ is the fibrewise projective $t$-space and
$P^{t}G$ is the $t$-skeleton of $P^{\infty}G$ with adjoint action of $G$ on $P^{\infty}G$ given by
\[
	h\{g_1|g_2|\cdots|g_t\} = \{hg_1\bar{h}|hg_2\bar{h}|\cdots | hg_t\bar{h}\},
\]
where $h \!\in\! G$ and $\{g_1|g_2|\cdots|g_t\}$ is a $t$-cell in $P^{\infty}G$ indexed by $(g_{1},\ldots,g_{t}) \in G^{t}$, $t \!\ge\! 0$.
\end{enumerate}
\end{lem}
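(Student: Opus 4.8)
The plan is to realise $\tilde{f}_{0}$ as the fibrewise holonomy map of the universal principal $G$-bundle $p_{\infty} : S^{\infty} \twoheadrightarrow BG$, to check that it is a fibrewise homotopy equivalence by Dold's theorem, to observe that it is a fibrewise $A_{\infty}$-map, and then to deduce (2) by applying the fibrewise bar (projective $t$-space) construction and invoking Dold's theorem once more. This is essentially the fibrewise adaptation, specialised to $X = BG$, of \cite[Proposition 2.1]{MR3975098}.

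First I would unwind $\LoopB{\double{BG}}$. For $\double{BG} = E_{\id_{BG}} = (BG \times BG, \proj_{1}, BG, \Delta)$ the fibre of the projection over $b \in BG$ is $\{b\} \times BG$ with fibrewise basepoint $(b,b)$, so by the very definition of $\LoopB{-}$ the space $\LoopB{\double{BG}}$ is canonically the free loop space $\map{S^{1},BG}$, fibred over $BG$ by $\gamma \mapsto \gamma(0)$ and having the constant loops as its section. Given a loop $\gamma$ at $b$ and a point $e \in p_{\infty}^{-1}(b)$, lift $\gamma$ to the unique path $\tilde{\gamma}$ in $S^{\infty}$ with $\tilde{\gamma}(0) = e$; since $p_{\infty}$ is a principal $G$-bundle, $\tilde{\gamma}(1) = \mu(e,\gamma){\cdot}e$ for a unique $\mu(e,\gamma) \in G$, depending continuously on $(e,\gamma)$, and one checks $\mu(h{\cdot}e,\gamma) = h\,\mu(e,\gamma)\,\bar{h}$ as well as $\mu(e,\gamma_{1}{\ast}\gamma_{2}) = \mu(e,\gamma_{1})\,\mu(e,\gamma_{2})$. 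Hence $(e,\gamma) \mapsto (e,\mu(e,\gamma))$ is a continuous $G$-map, and passing to $G$-quotients it descends to a fibrewise pointed map $\tilde{f}_{0} : \LoopB{\double{BG}} \to S^{\infty} \times_{\ad{}} G$ over $BG$, the conjugation action being precisely the adjoint action, with the constant loops going to the identity section.

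Next I would prove the two assertions. On the fibre over $b$, $\tilde{f}_{0}$ restricts to the classical holonomy map $\Loop BG \to G$, which is a homotopy equivalence --- indeed it is the identification of $\Loop BG$ with $G$ underlying the fact that $BG$ deloops $G$. Both $\LoopB{\double{BG}} \to BG$ and $S^{\infty} \times_{\ad{}} G \to BG$ are Hurewicz fibrations over the CW complex $BG$, so a fibrewise map which restricts to a homotopy equivalence on each fibre is a fibrewise homotopy equivalence by Dold's theorem; this proves the first assertion. The relation $\mu(e,\gamma_{1}{\ast}\gamma_{2}) = \mu(e,\gamma_{1})\,\mu(e,\gamma_{2})$ says, fibrewise, that $\tilde{f}_{0}$ intertwines the $A_{\infty}$-structure on $\LoopB{\double{BG}}$ with the strict fibrewise group structure of $S^{\infty} \times_{\ad{}} G$, i.e.\ $\tilde{f}_{0}$ is a fibrewise $A_{\infty}$-map in the sense of \cite{MR3975098}, proving (1). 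For (2): a fibrewise $A_{\infty}$-map induces compatible maps of the associated fibrewise projective $t$-spaces. The source $P^{t}_{\!B}\LoopB{\double{BG}}$ has fibre $P^{t}(\Loop BG)$ over $b$ by construction, while applying $P^{t}(-)$ to the fibrewise group $S^{\infty} \times_{\ad{}} G$, i.e.\ forming its fibrewise modified bar construction, yields $S^{\infty} \times_{\ad{}} P^{t}G$, the Borel construction of the $t$-skeleton $P^{t}G$ of $P^{\infty}G = BG$ for the conjugation action --- by naturality of the bar construction in $G$ this action is exactly $h\{g_{1}|\cdots|g_{t}\} = \{hg_{1}\bar{h}|\cdots|hg_{t}\bar{h}\}$. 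Thus $\tilde{f}_{0}$ induces $\tilde{f} : P^{t}_{\!B}\LoopB{\double{BG}} \to S^{\infty} \times_{\ad{}} P^{t}G$; on each fibre it is $P^{t}$ applied to the equivalence $\Loop BG \to G$, hence a homotopy equivalence, and a second application of Dold's theorem upgrades $\tilde{f}$ to a fibrewise homotopy equivalence.

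I expect the main obstacle to be the bookkeeping in the $A_{\infty}$-part: verifying that $\tilde{f}_{0}$ is genuinely a fibrewise $A_{\infty}$-map --- not merely a fibrewise map that is multiplicative on $\pi_{0}$ of the fibres --- so that it really induces $\tilde{f}$ on every $P^{t}_{\!B}$ compatibly with the filtration $P^{t}_{\!B} \subset P^{t+1}_{\!B}$, and matching the induced $G$-action on the modified bar construction with the literal adjoint action of the statement. This is precisely where one must lean on \cite[Proposition 2.1]{MR3975098} and the fibrewise $A_{\infty}$-machinery of \cite{MR2556074}; by contrast the purely topological input --- that holonomy is a fibre homotopy equivalence --- is soft, via Dold's theorem.
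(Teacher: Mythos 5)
Your proposal is correct and follows essentially the same route as the paper: the paper simply cites Benson (Prop.\ 2.12.1 of \emph{Representations and Cohomology II}) for the fibrewise pointed equivalence $\LoopB{\double{BG}} \simeq S^{\infty}\times_{\ad{}}G$ --- i.e.\ exactly the holonomy identification of the free loop space of $BG$ that you construct by hand --- then observes that since each fibre of $S^{\infty}\times_{\ad{}}G$ is the discrete set $G$, fibrewise multiplicativity on fibres upgrades automatically to a fibrewise $A_{\infty}$-map, and finally defers to Iwase--Sakai--Tsutaya, Prop.\ 2.1, for the induced equivalences on $P^{t}_{\!B}$. The one thing worth adding to your write-up is that explicit discreteness observation, since it is precisely what disposes of the higher $A_{\infty}$-coherences you flag as the ``main obstacle.''
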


\begin{proof}
We know, by a result of Benson \cite[Proposition 2.12.1]{MR1156302}, there is a fibrewise pointed homotopy equivalence $\tilde{f}_{0} : \LoopB{\double{BG}} \to S^{\infty}\times_{\ad{}}G$ over $BG$.
\begin{enumerate*}{(1)}
\vitem
Since $\tilde{f}_{\!0}$ satisfies that $\tilde{f}_{\!0}(\alpha \cdot \beta) = \tilde{f}_{\!0}(\alpha){\cdot} \tilde{f}_{\!0}(\beta)$ on each fibre and each fibre of $EG\times_{\ad{}}G$ is a discrete set, we obtain that $\tilde{f}_{\!0}$ is a fibrewise $A_{\infty}$-map.
\vitem
Thus $\tilde{f}_{\!0}$ induces a fibrewise maps $\tilde{f} : P^{t}_{\!B}\LoopB{\double{BG}} \to S^{\infty}\times_{\ad{}}P^{t}G$ and $\tilde{f}_{\!E} : E^{t}_{\!B}\LoopB{\double{BG}} \to S^{\infty}\times_{\ad{}}E^{t}G$ 
for all $t \ge 0$ and the following commutative diagram of fibrewise spaces.
$$
\begin{diagram}
\node{E^{t+1}_{\!B}\LoopB{\double{BG}}}
\arrow{s}
\arrow{e}
\node{P^{t}_{\!B}\LoopB{\double{BG}}}
\arrow{s}
\arrow{e}
\node{\double{BG}}
\arrow{s,=}
\\
\node{S^{\infty}\times_{\ad{}}E^{t+1}G}
\arrow{e}
\node{S^{\infty}\times_{\ad{}}P^{t}G}
\arrow{e}
\node{\double{BG}}
\end{diagram}
$$
By employing a similar arguments as in \cite[Proposition 2.1]{MR3975098}.
\end{enumerate*}
\end{proof}

By restricting the fibrewise structure to a subspace $M \overset{i}\hookrightarrow BG$, we obtain a fibrewise pointed space $E_{i}=\double{BG}|_{M}=\ifdefined\reviseone{(M \times BG,\proj_{1},M,(\id {\times} i)\comp\Delta)}\else{(BG \times M,\proj_{2},M,(i {\times} \id)\comp\Delta)}\fi$ and the following.

\begin{thm}\label{core}
There is a fibrewise $A_{\infty}$-map $f_{\!0} : \LoopB{E_{i}} \to S^{3}\times_{\ad{}}G$ over $M=S^{3}/G$, i.e,
\begin{enumerate}
\item[(i)] $f_{\!0} : \LoopB{E_{i}} \to S^{3}\times_{\ad{}}G$ is a fibrewise homotopy equivalence.
\item[(ii)] $f_{\!0}$ induces a fibrewise homotopy equivalence $f : P^{t}_{\!B}\LoopB{E_{i}} \to S^{3}\times_{\ad{}}P^{t}G$, $t \!\ge\! 0$.
\end{enumerate}
\end{thm}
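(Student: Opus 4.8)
The plan is to derive this "restricted" statement from the universal one, Lemma \ref{lem:core}, by pulling back along the classifying inclusion $i : M \hookrightarrow BG$. First I would observe that all four fibrewise spaces appearing in Lemma \ref{lem:core}, namely $\LoopB{\double{BG}}$, $S^{\infty}\times_{\ad{}}G$, $P^{t}_{\!B}\LoopB{\double{BG}}$ and $S^{\infty}\times_{\ad{}}P^{t}G$, are fibrewise spaces over $BG$, and that the constructions $E \mapsto \LoopB{E}$ and $E \mapsto P^{t}_{\!B}\LoopB{E}$ commute with base change along a map into the base: for the inclusion $i : M \hookrightarrow BG$ one has a canonical identification $\LoopB{E_{i}} \homeo i^{*}\bigl(\LoopB{\double{BG}}\bigr)$ and $P^{t}_{\!B}\LoopB{E_{i}} \homeo i^{*}\bigl(P^{t}_{\!B}\LoopB{\double{BG}}\bigr)$, because $E_{i}=\double{BG}|_{M}$ is by definition $i^{*}\double{BG}$ and the fibrewise loop space and the fibrewise projective $t$-space are both defined by fibrewise homotopy pull-backs in $\TopBB$, which are preserved by $i^{*}$. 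Likewise $i^{*}(S^{\infty}\times_{\ad{}}G)=S^{3}\times_{\ad{}}G$ and $i^{*}(S^{\infty}\times_{\ad{}}P^{t}G)=S^{3}\times_{\ad{}}P^{t}G$, since $i$ classifies the principal bundle $q : S^{3}\twoheadrightarrow M$, i.e. the square pulling back $S^{\infty}\twoheadrightarrow BG$ along $i$ returns $S^{3}\twoheadrightarrow M$, and the Borel constructions are associated bundles.

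Next I would set $f_{\!0}=i^{*}\tilde{f}_{\!0}$ and $f=i^{*}\tilde{f}$. Since a pull-back of a fibrewise homotopy equivalence along any map is again a fibrewise homotopy equivalence (the fibrewise homotopy inverse and the two fibrewise homotopies pull back as well), parts (i) and (ii) follow immediately from Lemma \ref{lem:core}(1)-(2) once the identifications of the previous paragraph are in place. For the $A_{\infty}$ assertion, I would note that the fibrewise $A_{\infty}$-structure on $\LoopB{E_{i}}$ is exactly the pull-back of the one on $\LoopB{\double{BG}}$, and that $i^{*}$ sends the structure maps $\tilde{f}_{\!0}$, together with the higher fibrewise homotopies witnessing that $\tilde{f}_{\!0}$ is a fibrewise $A_{\infty}$-map, to the corresponding data for $f_{\!0}$; here one also uses, as in the proof of Lemma \ref{lem:core}(1), that each fibre of $S^{3}\times_{\ad{}}G$ is the discrete set $G$, so the $A_{\infty}$-coherence collapses to the strict multiplicativity $f_{\!0}(\alpha\cdot\beta)=f_{\!0}(\alpha)\cdot f_{\!0}(\beta)$ on fibres. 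Finally the commutative ladder of fibrewise spaces over $M$ relating $E^{t+1}_{\!B}\LoopB{E_{i}}\to P^{t}_{\!B}\LoopB{E_{i}}\to E_{i}$ with $S^{3}\times_{\ad{}}E^{t+1}G\to S^{3}\times_{\ad{}}P^{t}G\to E_{i}$ is obtained by applying $i^{*}$ to the diagram in the proof of Lemma \ref{lem:core}(2).

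The only real subtlety — and the step I would be most careful about — is the verification that the fibrewise homotopy pull-backs defining $\LoopB{(-)}$ and $P^{t}_{\!B}\LoopB{(-)}$ genuinely commute with $i^{*}$ on the nose (or at least up to canonical fibrewise homotopy equivalence) inside the convenient category $\category{NG}$, and that the cofibration/well-pointedness hypotheses needed for Fact \ref{inj}-type arguments are inherited by $E_{i}$ from $\double{BG}$; since $i : M \hookrightarrow BG$ is the inclusion of a subcomplex by the remark following Fact \ref{dec}, this is a closed cofibration, so base change behaves well and the needed properties pass to the restriction. Everything else is a formal consequence of naturality of the constructions under pull-back along $i$.
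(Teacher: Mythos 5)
Your proposal is correct and follows essentially the same route as the paper: both identify $P^{t}_{\!B}\LoopB{E_{i}}$ with $i^{*}(P^{t}_{\!B}\LoopB{\double{BG}})$ and $S^{3}\times_{\ad{}}P^{t}G$ with $i^{*}(S^{\infty}\times_{\ad{}}P^{t}G)$, and then pull back the fibrewise homotopy equivalences of Lemma \ref{lem:core} along the subcomplex inclusion $i : M \hookrightarrow BG$. Your additional remarks on the $A_{\infty}$-structure and on the compatibility of homotopy pull-backs with $i^{*}$ merely spell out details the paper leaves implicit.
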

\begin{proof}
\begin{figure}[!ht]
\[\xymatrix{
M	\ar@{^{(}->}[d]_{i} 
& S^3\times_{\ad{}}P^{t}G
	\ar@{^{(}->}[d]_{\iota}
	\ar[l]
& P^{t}_{\!B}\LoopB{E_{i}}
	\ar@/_15pt/@{->}[ll]
	\ar@{^{(}->}[d]
			\ar@{->}[l]^{f} 
\\
BG & S^{\infty}\times_{\ad{}}P^{t}G
	\ar[l] 
& P^{t}_{\!B}\LoopB{\double{BG}} 
 	\ar@/^15pt/@{->}[ll]
	   	\ar[l]_{\tilde{f}}
}\]\caption{}
\end{figure}
We know that $P^{t}_{\!B}\LoopB{E_{i}} = i^{*}(P^{t}_{\!B}\LoopB{\double{BG}})$ is a fibrewise space over $M$ given by the pull-back of $M \overset{i}\hookrightarrow BG \leftarrow P^{t}_{\!B}\LoopB{\double{BG}}$, and there is a fibrewise homotopy equivalence $f : P^{t}_{\!B}\LoopB{E_{i}} \to S^3\times_{\ad{}}P^{t}G$ since $S^3\times_{\ad{}}P^{t}G$ is the pull-back of $M \overset{i}\hookrightarrow BG \leftarrow S^{\infty}\times_{\ad{}}P^{t}G $ and $\tilde{f} : P^{t}_{\!B}\LoopB{\double{BG}} \to S^{\infty}\times_{\ad{}}P^{t}G$ is a fibrewise homotopy equivalence by Lemma \ref{lem:core}.
\end{proof}

Next, we give the boundary formulas for $S^{3}\times_{\ad{}}P^{t}G$, $t \!\ge\! 0$.
The cell structure of $S^{3}\times_{\ad{}}P^{t}G$ can be described by product cells of $S^{3}$ and $P^{t}G$ as follows.
\[ S^{3}\times_{\ad{}}P^{t}G  =\bigcup_{\sigma \in \{ \text{$G$-cells of $S^{3}$}\}, \,\omega \in \reduced{G}^{t}} \sigma{\times}\{\omega\}/\sim, \]
where `$\sim$' is given by $g\sigma{\times}\{g\omega\reduced{G}\} \sim \sigma{\times}\{\omega\}, \, g \!\in\! G$.
Let us denote by $[\sigma| \{\omega\}]$ the equivalence class of $\sigma{\times}\{\omega\}$.
By Proposition \ref{boundaryformula}, we obtain modulo $2$ boundary formulas for $S^{3}\times_{\ad{}}P^{t}G$.
\begin{prop}
The boundaries of product cells
\[ 
[e^{4k} | \{\omega\} ],  [e^{4k+1}_{1} | \{\omega\} ],  \,[e^{4k+1}_{2} | \{\omega\} ],  \,[e^{4k+2}_{1} | \{\omega\} ],  \,[e^{4k+2}_{2} | \{\omega\} ], \,[e^{4k+3} | \{\omega\} ],\quad k \ge 0,
\] 
are described as a chain in the chain complex with coefficients in $\field_{2}$ as follows.
\begin{align*}&
\partial[e^{0}| \{\omega\} ]=[e^{0}| \{\partial\omega\} ],
\\&
\partial[e^{4k} | \{\omega\} ] = \sum_{g\in G}[e^{4k-1} | \{\overline{g}\omega g\}]+[e^{4k} | \{\partial\omega\} ]\quad (k \ge 1),
\\&
\partial[e^{4k+1}_{1}| \{\omega\} ]=[e^{4k}| \{\omega\} ]+[e^{4k}| \{\bar{a}\omega a\} ]+[e^{4k+1}_{1}| \{\partial\omega\} ], \\
&\partial[e^{4k+1}_{2}| \{\omega\} ]=[e^{4k}| \{\omega\} ]+[e^{4k}| \{\bar{b}\omega b\} ]+[e^{4k+1}_{2}| \{\partial\omega\} ], 
\\&
\partial[e^{4k+2}_{1}|\{\omega\}]=[e^{4k+1}_{1}|\{\omega\}]+[e^{4k+1}_{2}|\{\omega\}]\\&\qquad\qquad\qquad\qquad+[e^{4k+1}_{1}|\{\bar{a}\omega a\}]+[e^{4k+1}_{2}|\{\bar{b}\omega b\}]+[e^{4k+2}_{1}| \{\partial\omega\}], 
\\&
\partial[e^{4k+2}_{2}|\{\omega\}]=[e^{4k+1}_{1}|\{\omega\}]+[e^{4k+1}_{2}|\{\omega\}]\\&\qquad\qquad\qquad\qquad+[e^{4k+1}_{1}|\{\overline{ab} \omega ab\} ]+[e^{4k+1}_{2}| \{\bar{a}\omega a\} ]+[e^{4k+2}_{2}| \{\partial\omega\} ],
\\&
\partial[e^{4k+3}|\{\omega\}]=[e^{4k+2}_{1}|\{\omega\}]+[e^{4k+2}_{2}|\{\omega\}]\\&\qquad\qquad\qquad\qquad+[e^{4k+2}_{2}|\{\overline{ab}\omega ab\}]+[e^{4k+2}_{1}|\{\bar{a}\omega a\}]+[e^{4k+3}|\{\partial\omega\}],
\end{align*}
\ifx\undefined\unreduced 
where we abbreviate $\displaystyle[\sigma | \{\partial\omega\}] = \overset{t}{\underset{\substack{0 \le i \le t\\\dim{\partial_{i} \omega} = t{-}1}}{\textstyle\sum}} (-1)^{i}[\sigma | \{ \partial_{i} \omega \}]$ for $\sigma$ a $G$-cell of $S^{3}$ and $\omega \in \reduced{G}^{t}$.
\else
where we abbreviate $\displaystyle[\sigma | \{\partial\omega\}] = \overset{t}{\underset{0 \le i \le t}{\textstyle\sum}} (-1)^{i}[\sigma | \{ \partial_{i} \omega \}]$ for $\sigma$ a $G$-cell of $S^{3}$ and $\omega \in \reduced{G}^{t}$.
\fi
\end{prop}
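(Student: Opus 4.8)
The plan is to identify the cellular $\field_{2}$-chain complex of the balanced product $S^{3}\times_{\ad{}}P^{t}G$ with a tensor product over the group ring $\field_{2}[G]$, and then to read off the differential by substituting the equivariant boundary formulas for $S^{3}$ from Fact \ref{boundaryformula}. By the construction of the principal bundle $S^{3}\twoheadrightarrow M$, the sphere $S^{3}$ is a free $G$-CW complex with one free equivariant cell for each of the generators $e^{0}, e^{1}_{1}, e^{1}_{2}, e^{2}_{1}, e^{2}_{2}, e^{3}$ listed in Fact \ref{boundaryformula}, so $C_{*}(S^{3};\field_{2})$ is a complex of finitely generated free left $\field_{2}[G]$-modules whose $\field_{2}[G]$-linear differential is exactly the one written there. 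Likewise $C_{*}(P^{t}G;\field_{2})$ is a complex of free left $\field_{2}[G]$-modules for the adjoint action $h\{g_{1}|\cdots|g_{s}\}=\{hg_{1}\bar{h}|\cdots|hg_{s}\bar{h}\}$ of Lemma \ref{lem:core}, with $\field_{2}$-basis the cells $\{\omega\}$, $\omega\in\reduced{G}^{s}$, $0\le s\le t$, and with differential $\partial\{\omega\}=\sum_{i}(-1)^{i}\partial_{i}\{\omega\}$ given by the modified Bar formulas recalled at the start of \S\ref{sect:mainprop}. Since $G$ acts freely and cellularly on $S^{3}$, the diagonal $G$-action on $S^{3}\times P^{t}G$ is free and cellular, so $S^{3}\times_{\ad{}}P^{t}G$ is a CW complex with one cell $[\sigma | \{\omega\}]$ for each orbit of product cells, and its cellular chain complex is
\[
C_{*}(S^{3}\times_{\ad{}}P^{t}G;\field_{2})\;\cong\;\bigl(C_{*}(S^{3};\field_{2})\otimes_{\field_{2}}C_{*}(P^{t}G;\field_{2})\bigr)_{G}\;\cong\;C_{*}(S^{3};\field_{2})\otimes_{\field_{2}[G]}C_{*}(P^{t}G;\field_{2}),
\]
where the second isomorphism is the standard untwisting of a diagonal coinvariant tensor product, in which $C_{*}(S^{3};\field_{2})$ is viewed as a right $\field_{2}[G]$-module through the antipode $g\mapsto\bar{g}$ of $\field_{2}[G]$. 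Tracing through these isomorphisms identifies $[\sigma | \{\omega\}]$ with $\sigma\otimes\{\omega\}$ and records the twisting relation $g\sigma\otimes\{\omega\}=\sigma\otimes\{\bar{g}\omega g\}$ for $g\in G$.

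Under this identification the cellular differential is the Koszul differential $\partial(\sigma\otimes\{\omega\})=\partial\sigma\otimes\{\omega\}+(-1)^{|\sigma|}\sigma\otimes\partial\{\omega\}$; over $\field_{2}$ the sign disappears, and by the twisting relation a monomial $g\tau\otimes\{\omega\}$ occurring in $\partial\sigma\otimes\{\omega\}$ is rewritten as $[\tau | \{\bar{g}\omega g\}]$, while the second summand is exactly the abbreviation $[\sigma | \{\partial\omega\}]$ introduced in the statement. It then only remains to substitute, one by one, the six boundary formulas of Fact \ref{boundaryformula} reduced modulo $2$, so that each $\pm1$ becomes $+1$ and each of $a{\pm}1$, $b{\pm}1$, $ab{\pm}1$ becomes $a{+}1$, $b{+}1$, $ab{+}1$. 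For example $\partial e^{4k+1}_{1}=(a{+}1)e^{4k}$ yields $\partial(e^{4k+1}_{1}\otimes\{\omega\})=ae^{4k}\otimes\{\omega\}+e^{4k}\otimes\{\omega\}+e^{4k+1}_{1}\otimes\partial\{\omega\}=[e^{4k} | \{\bar{a}\omega a\}]+[e^{4k} | \{\omega\}]+[e^{4k+1}_{1} | \{\partial\omega\}]$, and the remaining five cases are entirely analogous, the $e^{4k}$-line with $k\ge1$ using $\partial e^{4k}=\sum_{g\in G}g\,e^{4k-1}$ and the $e^{0}$-line using $\partial e^{0}=0$. For $S^{3}$ only the cells with $k=0$ occur, but the $k\ge1$ lines are obtained the same way and apply verbatim to $S^{4\ell+3}\times_{\ad{}}P^{t}G$. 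One should also observe that conjugation by a fixed element of $G$ permutes $\reduced{G}$ and fixes the identity, so the shifted tuples $\bar{a}\omega a$, $\bar{b}\omega b$ and $\overline{ab}\,\omega\,ab$ again lie in $\reduced{G}^{t}$ and no degenerate cells are introduced.

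The argument is essentially bookkeeping, and the only real care needed is in the chain-level identification above: pinning down the left and right $\field_{2}[G]$-module conventions together with the antipode, so that the adjoint twist comes out as $\bar{g}\omega g$ rather than $g\omega\bar{g}$. Once that normalisation is fixed, all six boundary formulas are forced, and as a consistency check one can verify directly that $\partial^{2}[e^{3} | \{\omega\}]=0$, which must hold since the left-hand side is the cellular boundary of a CW complex.
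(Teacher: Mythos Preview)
Your argument is correct and is precisely what the paper has in mind: the paper does not spell out a proof at all, but simply prefaces the proposition with ``By Proposition~\ref{boundaryformula}, we obtain modulo $2$ boundary formulas for $S^{3}\times_{\ad{}}P^{t}G$,'' treating the result as an immediate consequence of the equivariant boundary formulas on $S^{3}$ together with the product cell structure and the identification $[g\sigma|\{\omega\}]=[\sigma|\{\bar{g}\omega g\}]$. Your write-up makes this explicit via the tensor-over-$\field_{2}[G]$ description and the Koszul rule, which is exactly the intended mechanism.

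One small inaccuracy worth fixing: you assert that $C_{*}(P^{t}G;\field_{2})$ is a complex of \emph{free} left $\field_{2}[G]$-modules for the adjoint action. This is false, since conjugation by any central element of $G=Q_{8}$ fixes every bar cell; in particular the centre $\{1,a^{2}\}$ acts trivially. Fortunately your argument never uses freeness on the $P^{t}G$ side: the coinvariants identification $(C_{*}(S^{3})\otimes C_{*}(P^{t}G))_{G}\cong C_{*}(S^{3})\otimes_{\field_{2}[G]}C_{*}(P^{t}G)$ and the resulting CW structure on the quotient require only that the diagonal action be free and cellular, which holds because $G$ already acts freely on $S^{3}$. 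Simply drop the word ``free'' in that sentence and the proof stands.
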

From Theorem \ref{core}, there is a fibrewise homotopy equivariance $f : P^{5}_{\!B}\LoopB{E_{i}} \xrightarrow{\simeq} S^{3}\times_{\ad{}}P^{5}G$.
On the other hand, we have a fibrewise map $\lambda=P^{5}_{\!B}\LoopB{\id_{M} \times i} : P^{5}_{\!B}\LoopB{\double{M}} \to P^{5}_{\!B}\LoopB{E_{i}}$ over $M$ in Diag. 2, which is commutative and $P^{5}_{\!B}\LoopB{E_{i}}$ is the pull-back of $M \overset{i}\hookrightarrow BG \leftarrow P^{5}_{\!B}\LoopB{\double{BG}}$.
\begin{figure}[h!]
\[\xymatrix{
M \ar@{=}[d]& P^{5}_{\!B}\LoopB{\double{M}}\ar[d]^{\lambda}\ar[l] \\
M & P^{5}_{\!B}\LoopB{E_{i}} \ar[l]
}\]\caption{}
\end{figure}

Now, let us denote by $C_{\ast}(X)$ the celluar chain complex with coefficients in $\field_{2}$ and by $C^{\ast}(X,R)=\hom(C_{\ast}(X),R)$ the cellular cochain complex with coefficients in $\field_{2}$-module $R$.
We define three cochains $c \in C^6(S^{3}\times_{\ad{}}P^{6}G;\field_{2}) = C^6(S^{3}\times_{\ad{}}P^{\infty}G;\field_{2})$, $c' \in C^5(S^{3}\times_{\ad{}}P^{5}G;\field_{2}) = C^5(S^{3}\times_{\ad{}}P^{\infty}G;\field_{2})$ and 
$v \in C^1(S^{3}\times_{\ad{}}P^{1}G;\field_{2}) = C^1(S^{3}\times_{\ad{}}P^{\infty}G;\field_{2})$ by the following equations on a generator $[\sigma| \{ h_1 | \cdots | h_{t}\}]$, $t \ge 0$ in $C_{*}(S^{3}\times_{\ad{}}P^{\infty}G)$:
\begin{align*}&
c[\sigma| \{ h_1 | \cdots | h_{t}\}] = p^{*}\zeta(\sigma){\cdot}\alpha^{2}\beta\{h_1 | \cdots | h_{t}\},
\\&
c'[\sigma| \{ h_1 | \cdots | h_{t}\}] = p^{*}\zeta(\sigma){\cdot}\alpha^{2}\{h_1 | \cdots | h_{t}\},
\\&
v[\sigma| \{ h_1 | \cdots | h_{t}\}] = p^{*}1(\sigma){\cdot}\beta\{h_1 | \cdots | h_{t}\}.
\end{align*}

Then we show that $c$, $c'$ and $v$ are well-defined:
since $a^mb^na^kb^l$ $=$ $a^{m+(-1)^nk}b^{n+l}$, for any $g=a^mb^n$ and $h=a^kb^l$, we obtain
\begin{center}
$gh\bar{g}$ 
$=$ $a^{m+(-1)^{n}k+(-1)^{l}m}b^{l}$, 
\end{center}
and hence $\alpha(gh\bar{g})=k=\alpha(h)$ and $\beta(gh\bar{g})=l=\beta(h)$ in $\field_{2}$.
Hence they satisfy
\begin{align*}&
c[g\sigma| {g}\{h_1 | \cdots | h_{t}\}] 
= p^{*}\zeta(g\sigma){\cdot}\alpha^2\beta\{gh_1\bar{g} | \cdots | gh_{t}\bar{g}\}
\\&\qquad\qquad\qquad\qquad= p^{*}\zeta(\sigma){\cdot}\alpha^2\beta\{h_1 | \cdots | h_{t}\} 
= c[\sigma| \{h_1 | \cdots | h_{t}\}],
\\&
c'\{g\sigma| {g}\{h_1 | \cdots | h_{t}\}\}
= p^{*}\zeta(g\sigma){\cdot}\alpha^{2}\{gh_1\bar{g} | \cdots | gh_{t}\bar{g}\}
\\&\qquad\qquad\qquad\qquad= p^{*}\zeta(\sigma){\cdot}\alpha^{2}\{h_1 | \cdots | h_{t}\} 
= c'[\sigma| \{h_1 | \cdots | h_{t}\}],
\\&
v[g\sigma| {g}\{h_1 | \cdots | h_{t}\}] 
= p^{*}1(g\sigma){\cdot}\beta\{gh_1\bar{g} | \cdots | gh_{t}\bar{g}\}
\\&\qquad\qquad\qquad\qquad= p^{*}1(\sigma){\cdot}\beta\{h_1 | \cdots | h_{t}\} 
= v[\sigma| \{h_1 | \cdots | h_{t}\}].
\end{align*}
Thus $c$, $c'$ and $v$ are well-defined. Since $z$, $\alpha$ and $\beta$ are cocycles, we have the following.

\begin{prop}
$c$, $c'$ and $v$ are cocycles satisfying $c=v \cupp c'$ in $C^{*}(S^{3}\times_{\ad{}}P^{\infty}G;\field_{2})$.
\end{prop}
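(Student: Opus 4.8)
The plan is to verify the two assertions separately --- that $c$, $c'$, $v$ are cocycles, and that $c=v\cupp c'$ holds on the nose --- both being direct cochain-level computations with the boundary formulas for $S^{3}\times_{\ad{}}P^{\infty}G$ established just above, fed by a few facts already at hand: $\alpha$ and $\beta$ are cocycles on $P^{\infty}G$ by Fact~\ref{thm:bar-resolution}, hence so are $\alpha^{2}$ and $\alpha^{2}\beta$; the invariances $\alpha(gh\bar g)=\alpha(h)$ and $\beta(gh\bar g)=\beta(h)$ computed just above; and the fact that $p^{*}1$ is the unit $0$-cochain of $C^{*}(S^{3};\field_{2})$ while $p^{*}\zeta$ is top-dimensional, so that $\delta(p^{*}\zeta)=0$ and $e^{3}$ never occurs in the cellular boundary of a higher cell of $S^{3}$.

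For the cocycle conditions: since $c$ is supported on the cells $[e^{3}|\{h_{1}|h_{2}|h_{3}\}]$ and $e^{3}$ is the top cell of $S^{3}$, evaluating $\delta c$ on a $7$-cell $[\sigma|\{\omega\}]$ leaves only the term $[e^{3}|\{\partial\omega\}]$ of $\partial[e^{3}|\{\omega\}]$ (with $\omega\in\reduced{G}^{4}$), which contributes $p^{*}\zeta(e^{3})\cdot(\alpha^{2}\beta)(\partial\omega)=\langle\delta(\alpha^{2}\beta),\omega\rangle=0$. The identical computation with $\alpha^{2}$ in place of $\alpha^{2}\beta$ gives $\delta c'=0$. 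For $v$, supported on $[e^{0}|\{h\}]$, two types of boundary term can contribute: $[e^{0}|\{\partial\omega\}]$ inside $\partial[e^{0}|\{\omega\}]$ with $\omega\in\reduced{G}^{2}$, giving $\langle\delta\beta,\omega\rangle=0$; and the pair $[e^{0}|\{h\}]+[e^{0}|\{\bar a h a\}]$ inside $\partial[e^{1}_{1}|\{h\}]$ (and likewise with $\bar b\cdots b$ inside $\partial[e^{1}_{2}|\{h\}]$), giving $\beta(h)+\beta(\bar a h a)=2\beta(h)=0$ in $\field_{2}$ by the conjugation-invariance of $\beta$. Hence all three are cocycles.

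For the identity $c=v\cupp c'$ I would compute the cellular cup product on $S^{3}\times_{\ad{}}P^{\infty}G=(S^{3}\times P^{\infty}G)/G$ from a $G$-equivariant diagonal approximation on the free $G$-CW complex $S^{3}\times P^{\infty}G$, namely the shuffle of a $G$-equivariant Alexander--Whitney diagonal on $S^{3}$ with the standard one on the bar construction $P^{\infty}G$; with respect to this the cup product of two cochains of the ``product form'' $[\sigma|\{\omega\}]\mapsto\xi(\sigma)\cdot\eta(\{\omega\})$ is again of product form and is computed factorwise. Since $v$, $c'$ and $c$ are exactly of this form, $c=v\cupp c'$ reduces to the unit law $p^{*}1\cupp p^{*}\zeta=p^{*}\zeta$ in $C^{*}(S^{3};\field_{2})$ together with the cochain identity $\beta\cupp\alpha^{2}=\alpha^{2}\beta$ on $P^{\infty}G$, which follows directly from the Alexander--Whitney formula on the modified bar construction once the product $\alpha^{2}\beta$ is read in the matching order. (Cohomologically this is merely the remark that $\ad{}$-conjugation is inner, hence homotopically trivial on $BG$, so $S^{3}\times_{\ad{}}P^{\infty}G\simeq M\times P^{\infty}G$ with the tensor-product ring structure; but the cochain-level statement is what is needed downstream in Lemma~\ref{lem:mainlem}.)

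The conceptual content is minimal, and the one point needing genuine care is the bookkeeping around the twisted product: making precise the factorwise description of the cellular cup product --- this is exactly where the conjugation-invariance of $\alpha$ and $\beta$ is used, to keep the relevant diagonal $G$-equivariant --- and, since over $\field_{2}$ signs disappear, fixing the ordering convention in the Alexander--Whitney product so that $\beta\cupp\alpha^{2}$ is literally the cochain written $\alpha^{2}\beta$ rather than merely a cocycle in the same class. With those conventions pinned down, the remaining verifications are the routine face-by-face computations indicated above.
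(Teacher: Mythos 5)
Your proposal is correct and is essentially the paper's own (largely implicit) argument: the paper offers only the one-line remark that $z$, $\alpha$ and $\beta$ are cocycles, and your face-by-face verification of $\delta c=\delta c'=\delta v=0$ together with the front-face/back-face computation of $v\cupp c'$ is exactly the intended justification, matching the cup-product formula the paper uses later in the proof of Proposition \ref{prop:eq2toeq1}. Your caveat about fixing the Alexander--Whitney ordering so that the cochain written $\alpha^{2}\beta$ is literally $\beta\cupp\alpha\cupp\alpha$ (rather than $\alpha\cupp\alpha\cupp\beta$, which differs from $v\cupp c'$ cell by cell even though it is cohomologous) is a genuine convention point the paper glosses over, and you resolve it appropriately.
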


\begin{prop}
$(e''_{\infty})^{\ast}(z \otimes z)= [c]$ in $H^{6}(S^{3}\times_{\ad{}}P^{\infty}G;\field_{2})$.
\end{prop}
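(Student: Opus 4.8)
The plan is to exploit that $(e''_{\infty})^{\ast}$ is a ring homomorphism together with the factorisation $z\otimes z=(z\otimes 1){\cdot}(1\otimes z)$ in $H^{\ast}(E_{i};\field_{2})\cong H^{\ast}(M\times BG;\field_{2})$. Recall that $e''_{\infty}\colon S^{3}\times_{\ad{}}P^{\infty}G\to E_{i}$ is the fibrewise homotopy equivalence over $M$ furnished by Theorem \ref{core} (passing to the colimit as $t\to\infty$), that $E_{i}$ has structure projection $\proj_{1}$ onto $M$, and that consequently $q:=\proj_{1}\comp e''_{\infty}\colon S^{3}\times_{\ad{}}P^{\infty}G\to M=S^{3}/G$ is the canonical bundle projection, while $e''_{\infty}$ restricts over each point of $M$ to a homotopy equivalence $P^{\infty}G\xrightarrow{\simeq}BG$ between the fibres. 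So it is enough to compute $(e''_{\infty})^{\ast}(z\otimes 1)$ and $(e''_{\infty})^{\ast}(1\otimes z)$ and take their product.

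For the first factor I would use $z\otimes 1=\proj_{1}^{\ast}z$ and $\proj_{1}\comp e''_{\infty}=q$, so that $(e''_{\infty})^{\ast}(z\otimes 1)=q^{\ast}z$. Since $q$ is cellular and carries every product cell $[\sigma|\{h_{1}|\cdots|h_{t}\}]$ with $t\ge 1$ into a strictly lower skeleton of $M$ while mapping $[\sigma|\{\,\}]$ homeomorphically onto the image cell of $\sigma$, the cochain $q^{\ast}\zeta$ is exactly $c_{0}:=p^{\ast}\zeta\times 1$, i.e. $c_{0}[\sigma|\{h_{1}|\cdots|h_{t}\}]=p^{\ast}\zeta(\sigma)$ for $t=0$ and $0$ for $t\ge 1$, where $1$ is the $0$-cochain dual to $\{\,\}$; hence $(e''_{\infty})^{\ast}(z\otimes 1)=[c_{0}]$. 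For the second factor, $(e''_{\infty})^{\ast}(1\otimes z)$ restricts on every fibre $P^{\infty}G$ to the image of $z$ under the isomorphism $H^{3}(BG;\field_{2})\xrightarrow{\cong}H^{3}(P^{\infty}G;\field_{2})$, that is, to the unique nonzero class $[\alpha^{2}\beta]$. I would then introduce $\hat{z}:=p^{\ast}1\times\alpha^{2}\beta$, namely $\hat{z}[\sigma|\{h_{1}|\cdots|h_{t}\}]=p^{\ast}1(\sigma){\cdot}\alpha^{2}\beta\{h_{1}|\cdots|h_{t}\}$, verify that it is a cocycle on $S^{3}\times_{\ad{}}P^{\infty}G$ (the same computation with the boundary formulas and the conjugation‑invariance of $\alpha$ and $\beta$ that shows $c$, $c'$, $v$ are well‑defined cocycles), and note that $\hat{z}$ restricts to $[\alpha^{2}\beta]$ on every fibre as well. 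Therefore $\varepsilon:=(e''_{\infty})^{\ast}(1\otimes z)-[\hat{z}]$ restricts to $0$ on each fibre.

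To conclude I would use that $e''_{\infty}$ is also a plain homotopy equivalence onto $M\times BG$: via $(e''_{\infty})^{\ast}$ and Künneth, $H^{\ast}(S^{3}\times_{\ad{}}P^{\infty}G;\field_{2})\cong H^{\ast}(M;\field_{2})\otimes H^{\ast}(BG;\field_{2})$ as rings, the kernel of restriction to a fibre being $\widetilde{H}^{\ast}(M;\field_{2})\otimes H^{\ast}(BG;\field_{2})$, and $[c_{0}]$ corresponding to $z\otimes 1$ with $z\in H^{3}(M;\field_{2})$. Hence in degree $3$ we have $\varepsilon\in\bigoplus_{i\ge 1}H^{i}(M;\field_{2})\otimes H^{3-i}(BG;\field_{2})$, so $[c_{0}]{\cdot}\varepsilon\in H^{\ge 4}(M;\field_{2})\otimes H^{\ast}(BG;\field_{2})=0$ because $\dim{M}=3$. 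It then follows that
\[
[c]=[c_{0}\cupp\hat{z}]=[c_{0}]{\cdot}[\hat{z}]=[c_{0}]{\cdot}\bigl((e''_{\infty})^{\ast}(1\otimes z)-\varepsilon\bigr)=[c_{0}]{\cdot}(e''_{\infty})^{\ast}(1\otimes z)=(e''_{\infty})^{\ast}(z\otimes z),
\]
where the first equality is the cochain identity $c_{0}\cupp\hat{z}=(p^{\ast}\zeta\cupp p^{\ast}1)\times(1\cupp\alpha^{2}\beta)=p^{\ast}\zeta\times\alpha^{2}\beta=c$, legitimate because the formula for the cup product of exterior products of cochains is $G$‑equivariant here (all the cochains in sight being $G$‑invariant, exactly as for the identity $c=v\cupp c'$) and cupping with a $0$‑dimensional unit cochain is strictly the identity. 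The steps I expect to be most delicate are the cochain‑level identification $q^{\ast}\zeta=c_{0}$ (which requires the cellular structure of the bundle projection $q$ to be pinned down) and the verification that $\hat{z}$ is a cocycle with the stated restriction to the fibres; once these are in place, the remainder is formal bookkeeping with the Künneth decomposition and the vanishing of $H^{\ast}(M;\field_{2})$ above degree $3$.
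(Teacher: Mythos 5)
Your proof is correct, but it follows a genuinely different route from the paper's. The paper argues via the Serre filtration of $\hat{p} : S^{3}\times_{\ad{}}P^{\infty}G \to M$: the class $(e''_{\infty})^{\ast}(z \otimes z)$ is nonzero and restricts to zero on $\hat{p}^{-1}(M^{(2)})$, hence lies in the image of $j^{*} : H^{6}(S^{3}\times_{\ad{}}P^{\infty}G,\hat{p}^{-1}(M^{(2)});\field_{2}) \to H^{6}(S^{3}\times_{\ad{}}P^{\infty}G;\field_{2})$; that relative group is $E_{1}^{3,3} = \hom(C_{3}(M),H^{3}(P^{\infty}G;\field_{2})) \cong \field_{2}$, generated by the homomorphism that by definition corresponds to $[c]$, so the two classes must coincide. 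You instead factor $z\otimes z = (z\otimes 1)\cdot(1\otimes z)$, identify $(e''_{\infty})^{\ast}(z\otimes 1)$ on the nose with $[c_{0}]$ via the cellular bundle projection, determine $(e''_{\infty})^{\ast}(1\otimes z)$ only up to classes killed by the fibre restriction, and dispose of the ambiguity using $H^{>3}(M;\field_{2})=0$. Both arguments ultimately rest on the same two facts --- the top filtration piece of $H^{6}$ is detected by the fibre restriction of the second tensor factor, and $\dim{M}=3$ --- but yours trades the spectral-sequence bookkeeping for explicit cochain manipulation. That makes it more self-contained, at the cost of putting weight on unstated conventions: the identities $\hat{p}^{\#}\zeta = c_{0}$ and $c_{0}\cupp\hat{z} = c$ require fixing a cellular approximation of $\hat{p}$ and a $G$-equivariant Alexander--Whitney type diagonal on the product cell structure (for which $(f\times 1)\cupp(1\times g) = f\times g$ does hold strictly, so your computation goes through); since the paper itself asserts $c = v\cupp c'$ at the same level of rigour, this is acceptable, but you should state the choice of diagonal explicitly. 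What the paper's route buys is the complete avoidance of cochain-level cup product formulas; what yours buys is an explicit cocycle representative of $(e''_{\infty})^{\ast}(1\otimes z)$ modulo filtration, closer in spirit to the cochain computations carried out in the rest of \S 4.
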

\begin{proof}
Firstly, since the fibre bundle $\hat{p} : S^{3}\times_{\ad{}}P^{\infty}G \to M$ is fibrewise homotopy equivalent to the product bundle $\proj_{1} : M \times P^{\infty}G \to M$, their Serre spectral sequences are naturally isomorphic.
Secondly, $(e''_{\infty})^{\ast}(z \otimes z)$ is non-zero and in the image of $j^{*} : H^{6}(S^{3} \times_{\ad{}} P^{\infty}G,\hat{p}^{-1}(M^{(2)});\field_{2}) \to H^{6}(S^{3} \times_{\ad{}} P^{\infty}G;\field_{2})$, since it is trivial in $H^{6}(\hat{p}^{-1}(M^{(2)});\field_{2})$.
Thirdly, we have 
\begin{align*}&
H^{6}(S^{3} \times_{\ad{}} P^{\infty}G,\hat{p}^{-1}(M^{(2)});\field_{2}) = E^{3,3}_{1}(S^{3} \times_{\ad{}} P^{\infty}G) \\&\qquad\qquad\qquad\cong E^{3,3}_{1}(M \times P^{\infty}G) = \hom(C_{3}(M),H^{3}(P^{\infty}G;\field_{2})) \cong \field_{2}\phi, 
\end{align*}
where the generator $\phi$ is the homomorphism taking $\phi(e^{3})$ to the non-trivial element in $H^{3}(P^{\infty}G;\field_{2}) \cong \field_{2}z$, $z=[\alpha^{2}\beta]$.
Finally, $\phi$ is the corresponding homomorphism to $[c] \in H^{6}(S^{3} \times_{\ad{}} P^{\infty}G;\field_{2})$ (cf, \cite[XIII.4]{MR516508}) by definition.
Thus, we obtain $(e''_{\infty})^{\ast}(z \otimes z)=j^{*}\phi=[c]$.
\end{proof}

We consider the linear equation on $u \in C^{5}(S^3\times_{\ad{}}P^5G;\field_{2}) = C^{5}(S^3\times_{\ad{}}P^{\infty}G;\field_{2})$ as follows.
\begin{align}
\delta_{5} u = c\quad \text{in} \quad C^{6}(S^3\times_{\ad{}}P^5G;\field_{2}).\tag{Eq. \!1}\label{eq:Equation 1}
\end{align}

\begin{figure}[!ht]
\[\xymatrix{
  P^{5}_{\!B}\LoopB{\double{M}} \ar@/^15pt/@{->}[rr]^-{e^{\double{M}}_{5}} \ar@{^{(}->}[r]  \ar@{>}[d]_{\lambda} & P^{\infty}_{\!B}\LoopB{\double{M}} \ar[r] & \double{M} \ar[d]^{\id_{M} \times i}
  \\
 P^{5}_{\!B}\LoopB{E_{i}}\ar[d]_-{f} \ar@/^15pt/@{->}[rr]^-{e'_5} \ar@{^{(}->}[r] & \ar[r]_-{\simeq} P^{\infty}_{\!B}\LoopB{E_{i}} \ar[d]^{f} & E_{i} \ar@{=}[d]
 \\
S^{3}\times_{\ad{}}P^{5}G \ar@{^{(}->}[r] & S^{3}\times_{\ad{}}P^{\infty}G \ar[r]_-{e''_{\infty}} & E_{i}
}\]\caption{}
\end{figure}

Since (\ref{eq:Equation 1}) exceeds the acceptable size for our manipulation, we are forced to consider a similar but smaller linear equation on $u' \in C^4(S^3\times_{\ad{}}P^4G;\field_{2}) = C^4(S^3\times_{\ad{}}P^{\infty}G;\field_{2})$ as follows.
\begin{align}
\delta u' = c' \quad \text{in} \quad C^{5}(S^3\times_{\ad{}}P^4G;\field_{2}).
\tag{Eq. \!2}\label{eq:Equation 2}
\end{align}
Here, we may consider the equation (\ref{eq:Equation 2}) in $C^{5}(S^3\times_{\ad{}}P^5G;\field_{2})$ as $\delta u' = c'$ with indeterminacy on generators $[{*}|\{h_{1}|h_{2}|h_{3}|h_{4}|h_{5}\}]$ in $C_{5}(S^3\times_{\ad{}}P^5G) = C_{5}(S^3\times_{\ad{}}P^{\infty}G)$.
The following describes the relationship between (\ref{eq:Equation 1}) and (\ref{eq:Equation 2}).
\begin{prop}\label{prop:eq2toeq1}
If $u'$ is a solution for (\ref{eq:Equation 2}), then $u = v \cupp u'$ gives a solution for (\ref{eq:Equation 1}).
\end{prop}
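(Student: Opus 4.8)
The plan is to differentiate $u = v \cupp u'$ and feed in the two facts already recorded about $v$ and $c$: that $v$ is a cocycle, $\delta v = 0$, and that $c = v \cupp c'$ in $C^{*}(S^{3}\times_{\ad{}}P^{\infty}G;\field_{2})$. First I would invoke the Leibniz rule for the cellular cup product, which over $\field_{2}$ reads $\delta(v \cupp u') = (\delta v)\cupp u' + v \cupp \delta u'$; since $\delta v = 0$ this collapses to $\delta_{5} u = v \cupp \delta u'$. Hence (\ref{eq:Equation 1}) would follow instantly from $v \cupp \delta u' = v \cupp c' = c$ if $u'$ were an honest solution of $\delta u' = c'$ on all of $S^{3}\times_{\ad{}}P^{5}G$; the only thing to worry about is that $u'$ solves (\ref{eq:Equation 2}) merely in $C^{5}(S^{3}\times_{\ad{}}P^{4}G;\field_{2})$, so I must show that the extra $5$-cells of $S^{3}\times_{\ad{}}P^{5}G$ do not interfere after cupping with $v$.

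So the next step is to write $u = v \cupp u'$ and compute $\delta_{5} u = v \cupp \delta u' = v \cupp c' + v \cupp e = c + v \cupp e$, where the \emph{error cochain} is $e = \delta u' - c' \in C^{5}(S^{3}\times_{\ad{}}P^{\infty}G;\field_{2})$. Because $u'$ solves (\ref{eq:Equation 2}), $e$ vanishes on every $5$-cell of $S^{3}\times_{\ad{}}P^{4}G$, i.e. on every generator $[\sigma|\{h_{1}|\cdots|h_{t}\}]$ with $t \le 4$; thus if $e[\tau] \neq 0$ for a $5$-cell $\tau$, its bar-word has length $5$ (these are exactly the generators $[{*}|\{h_{1}|h_{2}|h_{3}|h_{4}|h_{5}\}]$ carrying the indeterminacy noted after (\ref{eq:Equation 2})). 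It then remains to prove $v \cupp e = 0$ in $C^{6}(S^{3}\times_{\ad{}}P^{5}G;\field_{2})$, and (\ref{eq:Equation 1}) is done.

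For that vanishing I would run the Alexander–Whitney expansion of the cup product, $(v \cupp e)[\tau] = \sum v[\tau_{1}]\,e[\tau_{2}]$ over the diagonal splittings of $\tau$ into a $1$-dimensional front face $\tau_{1}$ and a $5$-dimensional back face $\tau_{2}$, using only two structural features: the bar-word lengths of $\tau_{1}$ and $\tau_{2}$ add up to that of $\tau$, which is $\le 5$ for a $6$-cell of $S^{3}\times_{\ad{}}P^{5}G$; and $v[\sigma|\{\}] = p^{*}1(\sigma)\cdot\beta\{\} = 0$ since $\beta$ is a degree-$1$ cochain. Indeed, a nonzero term needs $e[\tau_{2}]\neq 0$, hence $\tau_{2}$ has bar-word length $5$; then $\tau_{1}$ has bar-word length $0$, so $\tau_{1} = [\sigma|\{\}]$ with $\dim\sigma = 1$, on which $v$ is zero. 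Therefore every term vanishes, $v \cupp e = 0$ in $C^{6}(S^{3}\times_{\ad{}}P^{5}G;\field_{2})$, and $\delta_{5}(v\cupp u') = c$, as required.

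I expect the genuinely delicate point to be exactly this last bookkeeping — being precise about the chosen cellular diagonal on $S^{3}\times_{\ad{}}P^{\infty}G$ and the way it distributes $S^{3}$-cell dimension and bar-word length between the front and back faces (up to the harmless $G$-translations coming from the adjoint twisting). Everything else is the derivation property of $\delta$ together with the two previously established identities $\delta v = 0$ and $c = v \cupp c'$.
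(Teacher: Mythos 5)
Your argument is correct and is essentially the paper's own proof: both reduce to $\delta(v\cupp u')=v\cupp\delta u'$ via $\delta v=0$, use $c=v\cupp c'$, and observe that cupping with the degree-one cochain $v$ only evaluates $\delta u'$ on back faces of bar-word length $\le 4$, where (\ref{eq:Equation 2}) holds without indeterminacy. Your packaging of the indeterminacy as an error cochain $e=\delta u'-c'$ with $v\cupp e=0$ is only a cosmetic reorganization of the paper's direct computation on generators $[\sigma|\{h_1|\cdots|h_k\}]$ with $3\le k\le 5$.
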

\begin{proof}
Let $u' \in C^4(S^3\times_{\ad{}}P^4G;\field_{2}) = C^4(S^3\times_{\ad{}}P^{\infty}G;\field_{2})$ be a solution of (\ref{eq:Equation 2}).
Then, $v \cupp \delta u'$ coincides with $v \cupp c'$ in $C^6(S^3\times_{\ad{}}P^5G;\field_{2})$:
for any generator $[\sigma|\{h_{1}|\cdots|h_{k}\}]$ in $C_{6}(S^3\times_{\ad{}}P^5G)$, 
\begin{align*}&
(v \cupp \delta u')[\sigma|\{h_{1}|\cdots|h_{k}\}] 
= v[{\ast}|\{h_{1}\}] \cdot \delta u'[\sigma|\{h_{2}|\cdots|h_{k}\}]
\\&\qquad\qquad
= v[{\ast}|\{h_{1}\}] \cdot c'[\sigma|\{h_{2}|\cdots|h_{k}\}]
= (v \cupp c')[\sigma|\{h_{1}|\cdots|h_{k}\}]
\end{align*}
without indeterminacy, since $3 \!\le\! k \!\le\! 5$.
Then it implies $v \cupp \delta u' = v \cupp c'$ in $C^6(S^3\times_{\ad{}}P^5G;\field_{2})$, and we obtain $\delta u = \delta(v \cupp u') = v \cupp \delta u' = v \cupp c' = c$ in $C^6(S^3\times_{\ad{}}P^5G;\field_{2})$, since $\delta v = 0$.
\end{proof}

To show the existence of the solution $u'$ of (\ref{eq:Equation 2}), we use the matrix representations of homomorphisms $\kappa^{*}c'$, $u'$ and $\delta$ which are respectively denoted by a $1 \times d_{5}$ matrix $T_{\!c'}$, a $1 \times d_{4}$ matrix $T_{\!u'}$ and a $d_{5} \times d_{4}$ matrix $T_{\!\delta}$, where we denote $d_{i}=\dim[\field_{2}]{C^{i}(S^3\times_{\ad{}}P^4G;\field_{2})}$, 
\ifdefined\unreduced
namely, $d_{4} = 5256$ and $d_{5} = 9280$, while $\dim[\field_{2}]{C^{5}(S^3\times_{\ad{}}P^5G;\field_{2})} = 42048$.
\else
namely, $d_{4} = 3192$ and $d_{5} = 5537$, while $\dim[\field_{2}]{C^{5}(S^3\times_{\ad{}}P^5G;\field_{2})} = 22344$.
\fi
Then (\ref{eq:Equation 2}) is realized using matrices above as the following linear equation on $\mathbold{x}={}^tT_{\!u'}$:
\[\tag{Eq. \!M}\label{eq:Equation matrix}
{}^tT_{\!\delta}\,\mathbold{x} = {}^tT_{\!c'}.
\]
We remark that the existence of the solution for (\ref{eq:Equation matrix}) implies that for (\ref{eq:Equation 2}), and hence that for (\ref{eq:Equation 1}) by Proposition \ref{prop:eq2toeq1}.

Now let us recall the following statements from the theory of linear algebra.
\begin{thm}
There is a solution of (\ref{eq:Equation matrix}), provided that the following equation holds:
\[\tag{Eq. \!R}\label{eq:Equation rank}
\mbox{rank } {}^tT_{\!\delta} = \mbox{rank }\left[ {}^tT_{\!\delta} | {}^tT_{\!c'}\right], 
\]
where $\left[ {}^tT_{\!\delta} | {}^tT_{\!c'}\right]$ is the augmented coefficients matrix of the equation (\ref{eq:Equation matrix}).
\end{thm}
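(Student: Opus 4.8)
The statement to prove is the elementary linear-algebra fact that the matrix equation $({}^tT_{\!\delta})\mathbb{x} = {}^tT_{\!c'}$ has a solution precisely when the rank of the coefficient matrix equals the rank of the augmented matrix. This is the Rouché–Capelli theorem, so the "proof" the authors are about to give should just be a recollection of that classical statement, tailored to the $\field_2$-coefficient setting in which they work.

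\medskip

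The plan is as follows. First I would recall that a system $A\mathbb{x}=\mathbf{b}$ over the field $\field_2$ is solvable if and only if $\mathbf{b}$ lies in the column space of $A$, i.e. in $\Img A$ regarded as a linear map $\field_2^{\,d_4}\to\field_2^{\,d_5}$. Here $A={}^tT_{\!\delta}$ and $\mathbf{b}={}^tT_{\!c'}$. Next I would observe that $\mathbf{b}\in\Img A$ is equivalent to the equality of subspaces $\Img A = \Img\left[A\mid\mathbf{b}\right]$, since the column space of the augmented matrix is spanned by the columns of $A$ together with $\mathbf{b}$, so it either equals $\Img A$ (when $\mathbf{b}\in\Img A$) or strictly contains it with one more dimension. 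Then, taking dimensions and using $\dim\Img A = \rank A$, this subspace equality becomes the numerical condition $\rank A = \rank\left[A\mid\mathbf{b}\right]$, which is exactly (\ref{eq:Equation rank}). Since one inclusion $\Img A\subseteq\Img[A\mid\mathbf{b}]$ is automatic, the rank equality forces the reverse inclusion, hence $\mathbf{b}\in\Img A$ and a solution exists.

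\medskip

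There is essentially no obstacle here: the only point worth a sentence of care is that we are working over $\field_2$ rather than over an arbitrary field, but the argument is field-independent, so nothing changes. In the write-up I would simply state that (\ref{eq:Equation rank}) is the Rouché–Capelli (or Kronecker–Capelli) criterion applied to (\ref{eq:Equation matrix}), give the one-line reason that $\rank[A\mid\mathbf{b}]\in\{\rank A,\ \rank A+1\}$ with the smaller value occurring exactly when $\mathbf{b}$ is a $\field_2$-linear combination of the columns of $A$, and conclude. The substantive content — actually verifying (\ref{eq:Equation rank}) for the explicit matrices $T_{\!\delta}$ and $T_{\!c'}$ — is deferred to the computer computation described in the Appendix and is not part of this theorem.
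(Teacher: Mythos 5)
Your argument is correct and is exactly the standard Rouch\'e--Capelli criterion; the paper itself offers no proof, simply citing this as a known fact from linear algebra, so your write-up supplies precisely the routine justification the authors omit. No discrepancy to report.
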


The size of the augmented coefficients matrix of (\ref{eq:Equation matrix}) exceeds the size for manipulation, we are forced to use a computer program to show (\ref{eq:Equation rank}).
Fortunately, our python program stops saying that (\ref{eq:Equation rank}) is correct.
Thus there is a solution $u'$ of (\ref{eq:Equation 2}), and we obtain $\delta u' = \kappa^{*}c'$.
Then by Proposition \ref{prop:eq2toeq1}, $u=v \cupp u'$ gives a solution of (\ref{eq:Equation 1}), and we obtain $[\kappa^{*}c] = [\delta u] = 0$.

\begin{Claim}[Lemma \ref{lem:mainlem}]
	$(e_{5}^{\double{M}})^{\ast}(z \otimes z) \!=\! 0$ in $H^{\ast}(P^{5}_{B}(\LoopB{\double{M}});\field_{2})$. \end{Claim}
\begin{proof}
Since ${i}^{*} : H^{*}({BG};\field_{2}) \to H^{*}({M};\field_{2})$ is surjective by the proof of Proposition \ref{prop:cohomologyofM}, we know that $z \otimes z = (i \times \id)^{*}(z \otimes z)$.
Then using a solution $u$ of (\ref{eq:Equation 1}), we obtain 
\begin{align*}
(e_{5}^{\double{M}})^{\ast}(z \otimes z) 
&= (e_{5}^{\double{M}})^{\ast}(\id_{M} \times i)^{\ast}(z \otimes z)
\\&= (f\!\comp\lambda)^{\ast}\kappa^{\ast}(e''_{\infty})^{\ast}(z \otimes z) 
= (f\!\comp\lambda)^{\ast}\kappa^{\ast}[c] 
\ifdefined\reviseone\else
\ \textem{ = (f\!\comp\lambda)^{\ast}[\delta (u) ] }
\fi
= 0,
\end{align*}
where $\kappa : S^3\times_{\ad{}}P^{5}G \hookrightarrow S^3\times_{\ad{}}P^{\infty}G$ denotes the canonical inclusion.

It implies $(e_{5}^{\double{M}})^{\ast}(z \otimes z) \!=\! 0$ in $H^{\ast}(P^{5}_{B}(\LoopB{\double{M}});\field_{2})$, and we are done. 
\end{proof}

\section*{Appendix A}

\ifdefined\algorithms
First, the following is the algorithm to check the equation (\ref{eq:Equation rank}).

\medskip

\begin{algorithm}[H]
\SetAlgoLined
\KwData{$G$ : Cayley table of $Q_8$, $L$ : the set of cells of $S^3/Q_8$, the modulo $2$ boundary formulas of product cells and $\delta u'$ : the cocycle in $C^{5}(S^3\times_{\ad{}}P^4G;\field_{2})$}
\KwResult{The status of the equation (\ref{eq:Equation rank})}
$Cells4$ : All $4$-cells in $S^3\times_{\ad{}}P^4G$\\
$Cells5$ : All $5$-cells in $S^3\times_{\ad{}}P^4G$\\
$delta$ : the matrix representing $\delta$ of type $(\#Cells5,\#Cells4)$
\\
$Delta$ : the augmeted matrix of type $(\#Cells5,\#Cells4 \!+\! 1)$ ($1$ for $c'$) 
\\[1ex]
We determine $Delta$ using the following algorithm.
\\[1ex]
$Delta = []$\\
\For{$s \in Cells5$ /* Comments below are in the case when $s = [e^{2}_{1}|\{a|a\} ]$ */}{
   $(f_s | c_s) = $ the tuple with column '$column$' /* $(-,\dots,-)$ */ \\
   \eIf{$(\delta u')(s) = 1$}{$(f_s | c_s)_{answer} = 1$ /* $(-,\dots,-,1)$ */}{$(f_s | c_s)_{answer} = 0$ /* $(-,\dots,-,0)$ */}
   \For{$t \in C_3$}{
\eIf{$t \in \partial s$/* $\partial [e^{2}_{1}|\{a|a\} ] = \{[e^{1}_{2}|\{a|a\}], [e^{1}_{2}|\{a^3|a^3\}], [e^{2}_{1}|\{a^2\} ] \}$ */}{
   $(f_s | c_s)_t = 1$
}{
   $(f_s | c_s)_t = 0$
}/*$(f_{[e^{2}_{1}|\{a|a\} ]} | c_{[e^{2}_{1}|\{a|a\} ]}) = (0,0,1,\dots, 0)$; in which $1$ is appearing at the place corresponding to $[e^{1}_{2}|\{a|a\}], [e^{1}_{2}|\{a^3|a^3\}]$ or $ [e^{2}_{1}|\{a^2\} ]$*/
   }
   /* the tuple $(f_s | c_s)$ consists of 0 or 1 */\\
   add $(f_s | c_s)$ to $Delta$}
\vskip1ex
This gives $Delta = [(f_{s_1} | c_{s_1}),\dots,(f_{s_n} | c_{s_n})]_{s_i \in Cells5}$.\\[1ex] We transform $Delta$ into the reduced row echelon form using Gaussian elimination.
\vspace*{1ex}
\caption{The algorithm to check the equation (\ref{eq:Equation rank})}
\end{algorithm}
\fi

\bigskip

\ifdefined\sourcecode

The following is the source code written in Python 3 to compute the rank of the differential $\delta : C^{4}(S^3\times_{\ad{}}P^4G;\field_{2}) \to C^{5}(S^3\times_{\ad{}}P^4G;\field_{2})$.

\medskip

\begin{lstlisting}[basicstyle=\ttfamily\footnotesize, frame=single, breaklines=true]
import numpy as np
from itertools import count, product

# z in H^{Element}(P^{Head}\Omega{M})
Manifold = 3    # = dim{M}
Element = 2     # = deg{z} =< dim{M}
Head = 4        # >= dim{M}

# Special Cocycles
def u(a) : # x
    return a % 2
def v(a): # y
    return a // 4
def z(i): # x^2
     return u(i[0]) * u(i[1])

# Cell decomposition of M
SpaceForm = [e0,e11,e12,e21,e22,e3] = ["e0","e11","e12","e21","e22","e3"]

# Group Structure
Group = [e,x,xx,xxx,y,xy,xxy,xxxy] = [0,1,2,3,4,5,6,7]
Mul = [
    [0,1,2,3,4,5,6,7],
    [1,2,3,0,5,6,7,4],
    [2,3,0,1,6,7,4,5],
    [3,0,1,2,7,4,5,6],
    [4,7,6,5,2,1,0,3],
    [5,4,7,6,3,2,1,0],
    [6,5,4,7,0,3,2,1],
    [7,6,5,4,1,0,3,2]]
Inv = [e,xxx,xx,x,xxy,xxxy,y,xy]

# Adjoint Action
def adj(a,b):
    return Mul[Inv[a]][Mul[b][a]]
def adl(a,lst):
    return [adj(a,i) for i in lst]
        
# Boundaries of the fibre - the resolution of the group
def difi(i,lst):
    n = len(lst)
    if i == 0:
        return lst[1:]
    elif i == n:
        return lst[:n-1]
    else:
        l = lst[:]
        l[i-1:i+1] = [Mul[lst[i-1]][lst[i]]]
        return l
def partial(s,lst):
    ans1 = [(s,difi(i,lst)) for i in range(len(lst)+1)]
    ans2 = []
    for i in ans1:
        if i in ans2:
            ans2.remove(i)
        else:
            ans2.append(i)
    return ans2

# Boundaries of the Fibrewise resolution
def boundary(s,w):
    if s == e3:
        return [(e21,w),(e21,adl(x,w)),(e22,w),(e22,adl(Mul[x][y],w))] + partial(s,w)
    elif s == e21:
        return [(e11,w),(e11,adl(x,w)),(e12,w),(e12,adl(y,w))] + partial(s,w)
    elif s == e22:
        return [(e11,w),(e11,adl(Mul[x][y],w)),(e12,w),(e12,adl(x,w))] + partial(s,w)
    elif s == e11:
        return [(e0,w),(e0,adl(x,w))] + partial(s,w)
    elif s == e12:
        return [(e0,w),(e0,adl(y,w))] + partial(s,w)
    elif s == e0:
        return partial(s,w)
    else:
        return 'miss!'

# Cells of the Reduced Projective Space up to dimension 'degree'
NonDegenerates = [7, 3, 6, 2, 5, 1, 4] # 823
lst = [[[]],[],[],[],[],[],[],[],[]]
for a in range(Head):
    for i,j in product(NonDegenerates,lst[a]):
        if j == []:
            w = [i]
        else:
            w = [i] + j
        lst[a+1].append(w)

# Counting Cells of each dimension.
Degree = Element + Manifold	# upper degree
degree = Degree - 1		# lower degree
#
LowerCells = [] # degree cells
for i in lst[degree]:
    LowerCells.append(str((e0,i)))
for i in lst[degree-1]:
    LowerCells.append(str((e11,i)))
    LowerCells.append(str((e12,i)))
for i in lst[degree-2]:
    LowerCells.append(str((e21,i)))
    LowerCells.append(str((e22,i)))
for i in lst[degree-3]:
    LowerCells.append(str((e3,i)))
#
UpperCells = [] # Degree cells
for i in lst[Degree]:
    UpperCells.append(str((e0,i)))
for i in lst[Degree-1]:
    UpperCells.append(str((e11,i)))
    UpperCells.append(str((e12,i)))
for i in lst[Degree-2]:
    UpperCells.append(str((e21,i)))
    UpperCells.append(str((e22,i)))
for i in lst[Degree-3]:
    UpperCells.append(str((e3,i)))

TheNumberofLowerCells = len(LowerCells)
TheNumberofUpperCells = len(UpperCells)

LabeltoNum = {LowerCells[i]:i for i in range(TheNumberofLowerCells)}

print('The Number of ' + str(degree) + '-cells is ' + str(TheNumberofLowerCells) + '.')
print('The Number of ' + str(Degree) + '-cells is ' + str(TheNumberofUpperCells) + '.')

# Initializing Augmented Coefficients Matrix
TheNumberofLowerCellsPlusAns = TheNumberofLowerCells + 1
TheNumberofUpperCellsPlusAns = TheNumberofUpperCells + 1

print("The size of the coefficients matrix delta is {0}x{1}.".format(TheNumberofUpperCells,TheNumberofLowerCells))

# Constructing Delta the set of coodinates whose entries are 1.
num = 0
Delta = [] # the set of coordinates whose entries are 1.
for i in lst[Degree]: # (e0,i) in UpperCells
    c = []
    J = boundary(e0,i)
    for k in boundary(e0,i):
        if k in c:
            c.remove(k)
        else:
            c.append(k)
    for j in c:
        j = str(j)
        if j in LowerCells:
            Delta.append((num,LabeltoNum[j]))
    num += 1
for i in lst[Degree-1]: # (e11,i) and (e12,i) in UpperCells
    c = []
    for k in boundary(e11,i):
        if k in c:
            c.remove(k)
        else:
            c.append(k)
    for j in c:
        j = str(j)
        if j in LowerCells:
            Delta.append((num,LabeltoNum[j]))
    num += 1
    c = []
    for k in boundary(e12,i):
        if k in c:
            c.remove(k)
        else:
            c.append(k)
    for j in c:
        j = str(j)
        if j in LowerCells:
            Delta.append((num,LabeltoNum[j]))
    num += 1
for i in lst[Degree-2]: # (e21,i) and (e22,i) in UpperCells
    c = []
    for k in boundary(e21,i):
        if k in c:
            c.remove(k)
        else:
            c.append(k)
    for j in c:
        j = str(j)
        if j in LowerCells:
            Delta.append((num,LabeltoNum[j]))
    num += 1
    c = []
    for k in boundary(e22,i):
        if k in c:
            c.remove(k)
        else:
            c.append(k)
    for j in c:
        j = str(j)
        if j in LowerCells:
            Delta.append((num,LabeltoNum[j]))
    num += 1
for i in lst[Degree-3]: # (e3,i) in UpperCells
    ans = z(i)
    if ans == 1:
        Delta.append((num,TheNumberofLowerCells))
    c = []
    for k in boundary(e3,i):
        if k in c:
            c.remove(k)
        else:
            c.append(k)
    for j in c:
        j = str(j)
        if j in LowerCells:
            Delta.append((num,LabeltoNum[j]))
    num += 1

AugCMat = np.zeros((TheNumberofUpperCells,TheNumberofLowerCellsPlusAns),dtype = bool)  # Augmented Coefficients Matrix
for i in Delta:
    AugCMat[i[0]][i[1]] = 1
CMat = np.delete(AugCMat,TheNumberofLowerCells,1)

print("The size of the augmented coefficients matrix Delta is {0}x{1}.".format(TheNumberofUpperCells,TheNumberofLowerCellsPlusAns))

# Transform AugCMat into the reduced row echelon form.
EAugCMat = AugCMat  # row echelon form of AugCMat
count = TheNumberofUpperCells
for j in range(0,TheNumberofLowerCells):
    num = -1
    for i in range(0,TheNumberofUpperCells):
        if num == -1: # no i st M[i][j] = 1
            if i < count:
                if EAugCMat[i][j] == 1:
                    num = i
            else:
                break
        else: # For i > num, add EAugCMat[num] to EAugCMat[i]
            if EAugCMat[i][j] == 1:
                EAugCMat[i] = EAugCMat[i] ^ EAugCMat[num]
    if num > -1: # move num culumn to the bottom
        EAugCMat = np.append(EAugCMat,[EAugCMat[num]],axis = 0)
        EAugCMat = np.delete(EAugCMat,num,0)
        count -= 1 # one more principal line
RankCMat = TheNumberofUpperCells - count
# row echelon form of CMat
ECMat = np.delete(EAugCMat,TheNumberofLowerCells,1)  
for i in range(0,count):
    ECMat = np.delete(ECMat, 0, 0)
print("The rank of the matrix delta is {0}.".format(RankCMat))
num = -1
for i in range(0,TheNumberofUpperCells):
    if num == -1: # no i st M[i][TheNumberofLowerCells] = 1
        if i < count:
            if EAugCMat[i][TheNumberofLowerCells] == 1:
                num = i
        else:
            break
    else: # For i > num, add EAugCMat[num] to EAugCMat[i]
        if EAugCMat[i][TheNumberofLowerCells] == 1:
            EAugCMat[i] = EAugCMat[i] ^ EAugCMat[num]
if num > -1: # move num culumn to the bottom
    EAugCMat = np.append(EAugCMat,[EAugCMat[num]],axis = 0) 
    EAugCMat = np.delete(EAugCMat,num,0) 
    count -= 1 # one more principal line
RankAugCMat = TheNumberofUpperCells - count
# row echelon form of AugCMat
for i in range(0,count):
    EAugCMat = np.delete(EAugCMat, 0, 0)
print("The rank of the matrix Delta is {0}.".format(RankAugCMat))

# Display a special solution
if RankCMat == RankAugCMat:
    Sol=[]
    for i in range(RankCMat):
        b = EAugCMat[i][TheNumberofLowerCells]
        if b == 1:
            for j in range(0,TheNumberofLowerCells):
                if ECMat[i][j] == 1:
                    Sol.append(j)
                    break
    LengthofSolution = len(Sol)
    if LengthofSolution > 0:
        print("The length of one particular solution is {0}.".format(LengthofSolution))
        print("The particular solution is {0}".format(LowerCells[Sol[0]]), end='')
        for i in range(1,len(Sol)):
            print(" + {0}".format(LowerCells[Sol[i]]), end='')
        print(".")
# Verify the solution
    for i in range(TheNumberofUpperCells):
        num = AugCMat[i][TheNumberofLowerCells]
        for j in Sol:
            num = num ^ CMat[i][j]
        if num != 0:
            print('NG!')
            break
    if i == TheNumberofUpperCells-1:
        print('OK!')
\end{lstlisting}

The above python program produces the following outputs:
\else

We use python program which produces the following outputs:
\fi

\par\smallskip
\ifdefined\unreduced 
\begin{lstlisting}[basicstyle=\ttfamily\footnotesize, frame=single, breaklines=true]
The Number of 4-cells is 5256.
The Number of 5-cells is 9280.
The size of the coefficients matrix delta is 9280x5256.
The size of the augmented coefficients matrix Delta is 9280x5257.
The rank of the matrix delta is 3600.
The rank of the matrix Delta is 3600.
\end{lstlisting}
\else
\begin{lstlisting}[basicstyle=\ttfamily\footnotesize, frame=single, breaklines=true]
The Number of 4-cells is 3192.
The Number of 5-cells is 5537.
The size of the coefficients matrix delta is 5537x3192.
The size of the augmented coefficients matrix Delta is 5537x3193.
The rank of the matrix delta is 2214.
The rank of the matrix Delta is 2214.
The length of one particular solution is 823.
\end{lstlisting}
\fi

\ifdefined\solution
The program outputs a list of cells in the form of {\small\verb$('e0', [7, 7, 7, 6])$}.
In fact, $[0,1,2,3,4,5,6,7]=[e,a,a^{2},a^{3},b,ab,a^{2}b,a^{3}b]$ in our program.
To compress the list, we transform them into the form of $[e0|7|7|7|6]$ which means the cell $[e^0|\{a^3b|a^3b|a^3b|a^2b\}]$.
Then one particular solution is given by $u'_{*} = [e0|{7|7|7|6}] + [e0|{7|7|6|6}] + [e0|{7|7|6|2}] + [e0|{7|7|2|5}] + [e0|{7|7|2|1}] + [e0|{7|7|5|3}] + [e0|{7|7|5|6}] + [e0|{7|7|5|5}] + [e0|{7|7|5|1}] + [e0|{7|7|1|3}] + [e0|{7|7|1|2}] + [e0|{7|3|6|5}] + [e0|{7|3|6|1}] + [e0|{7|3|5|3}] + [e0|{7|3|1|5}] + [e0|{7|6|7|5}] + [e0|{7|6|7|1}] + [e0|{7|6|3|3}] + [e0|{7|6|3|2}] + [e0|{7|6|3|5}] + [e0|{7|6|3|1}] + [e0|{7|6|6|2}] + [e0|{7|6|6|1}] + [e0|{7|6|2|7}] + [e0|{7|6|2|3}] + [e0|{7|6|2|6}] + [e0|{7|6|2|2}] + [e0|{7|6|2|1}] + [e0|{7|6|2|4}] + [e0|{7|6|5|3}] + [e0|{7|6|5|4}] + [e0|{7|6|1|7}] + [e0|{7|6|1|3}] + [e0|{7|6|1|2}] + [e0|{7|6|1|4}] + [e0|{7|2|7|6}] + [e0|{7|2|3|7}] + [e0|{7|2|3|2}] + [e0|{7|2|3|4}] + [e0|{7|2|6|6}] + [e0|{7|2|6|2}] + [e0|{7|2|2|6}] + [e0|{7|2|2|2}] + [e0|{7|2|2|5}] + [e0|{7|2|2|4}] + [e0|{7|2|5|2}] + [e0|{7|2|5|5}] + [e0|{7|2|5|1}] + [e0|{7|2|1|7}] + [e0|{7|2|1|3}] + [e0|{7|2|1|6}] + [e0|{7|2|1|4}] + [e0|{7|2|4|2}] + [e0|{7|5|7|6}] + [e0|{7|5|3|7}] + [e0|{7|5|3|2}] + [e0|{7|5|3|4}] + [e0|{7|5|6|3}] + [e0|{7|5|6|6}] + [e0|{7|5|6|2}] + [e0|{7|5|6|5}] + [e0|{7|5|2|2}] + [e0|{7|5|2|4}] + [e0|{7|5|5|3}] + [e0|{7|5|5|6}] + [e0|{7|5|5|2}] + [e0|{7|5|5|1}] + [e0|{7|5|1|3}] + [e0|{7|5|1|4}] + [e0|{7|5|4|2}] + [e0|{7|1|7|3}] + [e0|{7|1|7|2}] + [e0|{7|1|7|5}] + [e0|{7|1|3|3}] + [e0|{7|1|3|5}] + [e0|{7|1|6|3}] + [e0|{7|1|2|3}] + [e0|{7|1|2|6}] + [e0|{7|1|2|2}] + [e0|{7|1|2|4}] + [e0|{7|1|5|2}] + [e0|{7|1|5|5}] + [e0|{7|1|5|4}] + [e0|{7|1|1|3}] + [e0|{7|1|1|4}] + [e0|{7|1|4|3}] + [e0|{7|1|4|2}] + [e0|{7|1|4|5}] + [e0|{7|4|7|1}] + [e0|{7|4|3|7}] + [e0|{7|4|3|6}] + [e0|{7|4|6|7}] + [e0|{7|4|6|3}] + [e0|{7|4|6|6}] + [e0|{7|4|2|2}] + [e0|{7|4|2|4}] + [e0|{7|4|5|2}] + [e0|{7|4|5|4}] + [e0|{7|4|1|2}] + [e0|{7|4|4|2}] + [e0|{3|7|7|3}] + [e0|{3|7|7|2}] + [e0|{3|7|7|5}] + [e0|{3|7|7|1}] + [e0|{3|7|3|7}] + [e0|{3|7|3|3}] + [e0|{3|7|3|6}] + [e0|{3|7|3|5}] + [e0|{3|7|3|4}] + [e0|{3|7|6|5}] + [e0|{3|7|6|1}] + [e0|{3|7|2|3}] + [e0|{3|7|2|5}] + [e0|{3|7|5|7}] + [e0|{3|7|5|2}] + [e0|{3|7|5|5}] + [e0|{3|7|1|6}] + [e0|{3|7|1|4}] + [e0|{3|7|4|5}] + [e0|{3|7|4|1}] + [e0|{3|3|6|7}] + [e0|{3|3|2|1}] + [e0|{3|3|5|7}] + [e0|{3|3|5|3}] + [e0|{3|3|5|5}] + [e0|{3|3|1|6}] + [e0|{3|3|1|1}] + [e0|{3|6|7|3}] + [e0|{3|6|7|6}] + [e0|{3|6|7|2}] + [e0|{3|6|7|5}] + [e0|{3|6|3|7}] + [e0|{3|6|3|3}] + [e0|{3|6|3|6}] + [e0|{3|6|3|5}] + [e0|{3|6|3|4}] + [e0|{3|6|6|3}] + [e0|{3|6|6|6}] + [e0|{3|6|6|1}] + [e0|{3|6|6|4}] + [e0|{3|6|2|7}] + [e0|{3|6|2|3}] + [e0|{3|6|2|5}] + [e0|{3|6|2|1}] + [e0|{3|6|5|3}] + [e0|{3|6|5|5}] + [e0|{3|6|5|1}] + [e0|{3|6|1|7}] + [e0|{3|6|1|2}] + [e0|{3|6|1|5}] + [e0|{3|6|1|4}] + [e0|{3|6|4|7}] + [e0|{3|6|4|5}] + [e0|{3|2|7|1}] + [e0|{3|2|3|7}] + [e0|{3|2|3|6}] + [e0|{3|2|6|6}] + [e0|{3|2|6|2}] + [e0|{3|2|6|5}] + [e0|{3|2|6|4}] + [e0|{3|2|2|2}] + [e0|{3|2|2|5}] + [e0|{3|2|2|1}] + [e0|{3|2|2|4}] + [e0|{3|2|5|7}] + [e0|{3|2|5|3}] + [e0|{3|2|5|6}] + [e0|{3|2|5|2}] + [e0|{3|2|5|5}] + [e0|{3|2|5|1}] + [e0|{3|2|1|2}] + [e0|{3|2|1|1}] + [e0|{3|2|4|2}] + [e0|{3|5|7|7}] + [e0|{3|5|7|6}] + [e0|{3|5|7|1}] + [e0|{3|5|3|3}] + [e0|{3|5|3|6}] + [e0|{3|5|3|5}] + [e0|{3|5|3|4}] + [e0|{3|5|6|6}] + [e0|{3|5|6|5}] + [e0|{3|5|6|1}] + [e0|{3|5|2|7}] + [e0|{3|5|2|2}] + [e0|{3|5|2|5}] + [e0|{3|5|2|4}] + [e0|{3|5|5|5}] + [e0|{3|5|5|4}] + [e0|{3|5|1|5}] + [e0|{3|5|4|2}] + [e0|{3|5|4|5}] + [e0|{3|5|4|1}] + [e0|{3|1|7|1}] + [e0|{3|1|3|7}] + [e0|{3|1|3|6}] + [e0|{3|1|3|5}] + [e0|{3|1|6|3}] + [e0|{3|1|6|6}] + [e0|{3|1|6|5}] + [e0|{3|1|2|6}] + [e0|{3|1|2|2}] + [e0|{3|1|2|4}] + [e0|{3|1|5|7}] + [e0|{3|1|5|6}] + [e0|{3|1|5|4}] + [e0|{3|1|1|2}] + [e0|{3|1|4|2}] + [e0|{3|4|6|5}] + [e0|{3|4|6|1}] + [e0|{3|4|5|3}] + [e0|{3|4|1|5}] + [e0|{6|7|7|3}] + [e0|{6|7|3|3}] + [e0|{6|7|3|6}] + [e0|{6|7|3|2}] + [e0|{6|7|5|3}] + [e0|{6|7|5|2}] + [e0|{6|7|5|4}] + [e0|{6|7|1|5}] + [e0|{6|7|4|3}] + [e0|{6|7|4|5}] + [e0|{6|3|7|2}] + [e0|{6|3|7|5}] + [e0|{6|3|7|1}] + [e0|{6|3|3|2}] + [e0|{6|3|3|5}] + [e0|{6|3|3|1}] + [e0|{6|3|6|3}] + [e0|{6|3|6|5}] + [e0|{6|3|6|1}] + [e0|{6|3|6|4}] + [e0|{6|3|2|3}] + [e0|{6|3|2|2}] + [e0|{6|3|2|1}] + [e0|{6|3|5|7}] + [e0|{6|3|5|6}] + [e0|{6|3|5|2}] + [e0|{6|3|1|2}] + [e0|{6|3|1|5}] + [e0|{6|3|1|4}] + [e0|{6|3|4|7}] + [e0|{6|3|4|3}] + [e0|{6|3|4|2}] + [e0|{6|3|4|1}] + [e0|{6|3|4|4}] + [e0|{6|6|7|5}] + [e0|{6|6|3|7}] + [e0|{6|6|3|4}] + [e0|{6|6|2|5}] + [e0|{6|6|2|4}] + [e0|{6|6|5|3}] + [e0|{6|6|5|6}] + [e0|{6|6|5|1}] + [e0|{6|6|1|3}] + [e0|{6|6|4|1}] + [e0|{6|6|4|4}] + [e0|{6|2|7|7}] + [e0|{6|2|7|2}] + [e0|{6|2|6|7}] + [e0|{6|2|6|1}] + [e0|{6|2|2|7}] + [e0|{6|2|2|2}] + [e0|{6|2|2|4}] + [e0|{6|2|5|7}] + [e0|{6|2|5|6}] + [e0|{6|2|5|2}] + [e0|{6|2|1|7}] + [e0|{6|2|1|3}] + [e0|{6|2|1|6}] + [e0|{6|2|1|2}] + [e0|{6|2|1|5}] + [e0|{6|2|1|1}] + [e0|{6|2|4|7}] + [e0|{6|2|4|2}] + [e0|{6|2|4|1}] + [e0|{6|2|4|4}] + [e0|{6|5|7|3}] + [e0|{6|5|7|2}] + [e0|{6|5|7|5}] + [e0|{6|5|7|1}] + [e0|{6|5|3|3}] + [e0|{6|5|3|6}] + [e0|{6|5|3|2}] + [e0|{6|5|3|5}] + [e0|{6|5|3|1}] + [e0|{6|5|6|7}] + [e0|{6|5|6|6}] + [e0|{6|5|6|5}] + [e0|{6|5|6|4}] + [e0|{6|5|2|6}] + [e0|{6|5|2|2}] + [e0|{6|5|2|5}] + [e0|{6|5|5|7}] + [e0|{6|5|5|3}] + [e0|{6|5|5|6}] + [e0|{6|5|5|2}] + [e0|{6|5|5|5}] + [e0|{6|5|5|4}] + [e0|{6|5|1|2}] + [e0|{6|5|1|1}] + [e0|{6|5|1|4}] + [e0|{6|5|4|6}] + [e0|{6|5|4|2}] + [e0|{6|5|4|5}] + [e0|{6|1|7|3}] + [e0|{6|1|7|2}] + [e0|{6|1|3|6}] + [e0|{6|1|3|5}] + [e0|{6|1|6|3}] + [e0|{6|1|6|6}] + [e0|{6|1|6|2}] + [e0|{6|1|6|1}] + [e0|{6|1|2|7}] + [e0|{6|1|2|2}] + [e0|{6|1|2|5}] + [e0|{6|1|2|1}] + [e0|{6|1|5|7}] + [e0|{6|1|5|2}] + [e0|{6|1|5|5}] + [e0|{6|1|5|4}] + [e0|{6|1|4|6}] + [e0|{6|1|4|5}] + [e0|{6|4|7|7}] + [e0|{6|4|7|2}] + [e0|{6|4|7|5}] + [e0|{6|4|3|3}] + [e0|{6|4|3|2}] + [e0|{6|4|6|7}] + [e0|{6|4|6|3}] + [e0|{6|4|6|1}] + [e0|{6|4|2|7}] + [e0|{6|4|2|3}] + [e0|{6|4|2|2}] + [e0|{6|4|5|6}] + [e0|{6|4|5|1}] + [e0|{6|4|1|7}] + [e0|{6|4|1|6}] + [e0|{6|4|1|2}] + [e0|{6|4|1|5}] + [e0|{6|4|1|1}] + [e0|{6|4|4|7}] + [e0|{6|4|4|2}] + [e0|{6|4|4|5}] + [e0|{2|7|7|5}] + [e0|{2|7|3|7}] + [e0|{2|7|3|2}] + [e0|{2|7|3|4}] + [e0|{2|7|6|7}] + [e0|{2|7|6|6}] + [e0|{2|7|6|2}] + [e0|{2|7|6|4}] + [e0|{2|7|2|3}] + [e0|{2|7|2|6}] + [e0|{2|7|2|2}] + [e0|{2|7|2|5}] + [e0|{2|7|2|4}] + [e0|{2|7|5|3}] + [e0|{2|7|5|6}] + [e0|{2|7|5|2}] + [e0|{2|7|5|1}] + [e0|{2|7|1|5}] + [e0|{2|7|1|4}] + [e0|{2|7|4|2}] + [e0|{2|7|4|5}] + [e0|{2|7|4|1}] + [e0|{2|3|7|7}] + [e0|{2|3|7|3}] + [e0|{2|3|7|1}] + [e0|{2|3|3|7}] + [e0|{2|3|3|6}] + [e0|{2|3|3|1}] + [e0|{2|3|6|3}] + [e0|{2|3|6|6}] + [e0|{2|3|6|2}] + [e0|{2|3|6|4}] + [e0|{2|3|2|6}] + [e0|{2|3|2|2}] + [e0|{2|3|2|5}] + [e0|{2|3|2|1}] + [e0|{2|3|2|4}] + [e0|{2|3|5|3}] + [e0|{2|3|5|6}] + [e0|{2|3|5|2}] + [e0|{2|3|5|1}] + [e0|{2|3|1|6}] + [e0|{2|3|1|2}] + [e0|{2|3|4|2}] + [e0|{2|6|7|7}] + [e0|{2|6|7|3}] + [e0|{2|6|7|2}] + [e0|{2|6|7|5}] + [e0|{2|6|3|7}] + [e0|{2|6|3|5}] + [e0|{2|6|3|4}] + [e0|{2|6|6|7}] + [e0|{2|6|6|5}] + [e0|{2|6|6|1}] + [e0|{2|6|6|4}] + [e0|{2|6|2|7}] + [e0|{2|6|2|2}] + [e0|{2|6|2|5}] + [e0|{2|6|2|1}] + [e0|{2|6|2|4}] + [e0|{2|6|5|3}] + [e0|{2|6|5|2}] + [e0|{2|6|5|1}] + [e0|{2|6|5|4}] + [e0|{2|6|1|7}] + [e0|{2|6|1|3}] + [e0|{2|6|1|2}] + [e0|{2|6|1|5}] + [e0|{2|6|4|7}] + [e0|{2|6|4|2}] + [e0|{2|2|7|3}] + [e0|{2|2|7|6}] + [e0|{2|2|7|2}] + [e0|{2|2|7|5}] + [e0|{2|2|7|1}] + [e0|{2|2|7|4}] + [e0|{2|2|3|6}] + [e0|{2|2|3|2}] + [e0|{2|2|3|5}] + [e0|{2|2|3|1}] + [e0|{2|2|6|3}] + [e0|{2|2|6|2}] + [e0|{2|2|6|4}] + [e0|{2|2|2|7}] + [e0|{2|2|2|3}] + [e0|{2|2|2|6}] + [e0|{2|2|5|3}] + [e0|{2|2|5|6}] + [e0|{2|2|4|7}] + [e0|{2|5|7|3}] + [e0|{2|5|7|6}] + [e0|{2|5|3|5}] + [e0|{2|5|3|1}] + [e0|{2|5|3|4}] + [e0|{2|5|6|7}] + [e0|{2|5|6|3}] + [e0|{2|5|6|2}] + [e0|{2|5|6|1}] + [e0|{2|5|6|4}] + [e0|{2|5|2|6}] + [e0|{2|5|5|6}] + [e0|{2|1|7|3}] + [e0|{2|1|7|6}] + [e0|{2|1|7|4}] + [e0|{2|1|3|7}] + [e0|{2|1|3|6}] + [e0|{2|1|6|3}] + [e0|{2|1|6|2}] + [e0|{2|1|6|5}] + [e0|{2|1|6|1}] + [e0|{2|1|6|4}] + [e0|{2|1|2|6}] + [e0|{2|1|1|7}] + [e0|{2|4|7|3}] + [e0|{2|4|7|2}] + [e0|{2|4|7|5}] + [e0|{2|4|7|1}] + [e0|{2|4|3|7}] + [e0|{2|4|3|6}] + [e0|{2|4|3|4}] + [e0|{2|4|2|7}] + [e0|{5|7|7|3}] + [e0|{5|7|7|6}] + [e0|{5|7|3|7}] + [e0|{5|7|3|1}] + [e0|{5|7|3|4}] + [e0|{5|7|6|3}] + [e0|{5|7|6|2}] + [e0|{5|7|6|5}] + [e0|{5|7|5|3}] + [e0|{5|3|7|2}] + [e0|{5|3|7|5}] + [e0|{5|3|7|1}] + [e0|{5|3|3|7}] + [e0|{5|3|3|6}] + [e0|{5|3|3|2}] + [e0|{5|3|3|1}] + [e0|{5|3|6|6}] + [e0|{5|3|6|5}] + [e0|{5|3|2|7}] + [e0|{5|3|2|6}] + [e0|{5|3|1|4}] + [e0|{5|3|4|3}] + [e0|{5|3|4|1}] + [e0|{5|6|7|7}] + [e0|{5|6|7|2}] + [e0|{5|6|7|5}] + [e0|{5|6|7|1}] + [e0|{5|6|7|4}] + [e0|{5|6|3|6}] + [e0|{5|6|3|2}] + [e0|{5|6|3|5}] + [e0|{5|6|3|1}] + [e0|{5|6|6|7}] + [e0|{5|6|6|2}] + [e0|{5|6|2|7}] + [e0|{5|6|2|3}] + [e0|{5|6|2|6}] + [e0|{5|6|2|2}] + [e0|{5|6|2|5}] + [e0|{5|6|2|4}] + [e0|{5|6|5|2}] + [e0|{5|6|5|1}] + [e0|{5|6|1|3}] + [e0|{5|6|1|6}] + [e0|{5|6|1|2}] + [e0|{5|6|1|5}] + [e0|{5|6|1|4}] + [e0|{5|6|4|7}] + [e0|{5|6|4|3}] + [e0|{5|2|7|3}] + [e0|{5|2|3|3}] + [e0|{5|2|3|6}] + [e0|{5|2|3|5}] + [e0|{5|2|3|1}] + [e0|{5|2|3|4}] + [e0|{5|2|6|7}] + [e0|{5|2|6|3}] + [e0|{5|2|6|2}] + [e0|{5|2|2|3}] + [e0|{5|2|2|6}] + [e0|{5|2|5|3}] + [e0|{5|5|7|3}] + [e0|{5|5|7|6}] + [e0|{5|5|3|7}] + [e0|{5|5|3|3}] + [e0|{5|5|3|2}] + [e0|{5|5|3|1}] + [e0|{5|5|6|6}] + [e0|{5|5|6|2}] + [e0|{5|5|6|5}] + [e0|{5|5|6|1}] + [e0|{5|5|5|6}] + [e0|{1|7|7|7}] + [e0|{1|7|7|2}] + [e0|{1|7|7|4}] + [e0|{1|7|3|7}] + [e0|{1|7|6|7}] + [e0|{1|7|6|3}] + [e0|{1|7|6|5}] + [e0|{1|7|6|1}] + [e0|{1|7|2|7}] + [e0|{1|7|2|5}] + [e0|{1|7|5|5}] + [e0|{1|7|1|7}] + [e0|{1|7|1|6}] + [e0|{1|7|1|2}] + [e0|{1|7|1|5}] + [e0|{1|7|1|1}] + [e0|{1|7|4|7}] + [e0|{1|7|4|3}] + [e0|{1|3|7|6}] + [e0|{1|3|7|2}] + [e0|{1|3|7|5}] + [e0|{1|3|7|1}] + [e0|{1|3|3|7}] + [e0|{1|3|6|3}] + [e0|{1|3|6|5}] + [e0|{1|3|2|6}] + [e0|{1|3|1|7}] + [e0|{1|6|7|3}] + [e0|{1|6|7|5}] + [e0|{1|6|3|6}] + [e0|{1|6|3|2}] + [e0|{1|6|3|5}] + [e0|{1|6|6|3}] + [e0|{1|6|6|5}] + [e0|{1|6|6|4}] + [e0|{1|6|2|3}] + [e0|{1|6|2|2}] + [e0|{1|6|2|5}] + [e0|{1|6|2|4}] + [e0|{1|6|1|2}] + [e0|{1|6|1|5}] + [e0|{1|6|1|4}] + [e0|{1|6|4|3}] + [e0|{1|6|4|2}] + [e0|{1|6|4|5}] + [e0|{1|2|7|3}] + [e0|{1|2|7|6}] + [e0|{1|2|7|5}] + [e0|{1|2|7|1}] + [e0|{1|2|7|4}] + [e0|{1|2|6|3}] + [e0|{1|2|6|2}] + [e0|{1|2|6|4}] + [e0|{1|1|7|5}] + [e0|{1|1|3|7}] + [e0|{1|1|3|6}] + [e0|{1|1|6|1}] + [e0|{1|1|2|7}] + [e0|{4|7|2|3}] + [e0|{4|7|1|5}] + [e0|{4|7|1|1}] + [e0|{4|7|4|5}] + [e0|{4|3|7|6}] + [e0|{4|3|7|5}] + [e0|{4|3|3|7}] + [e0|{4|3|3|2}] + [e0|{4|3|3|4}] + [e0|{4|3|6|7}] + [e0|{4|3|6|6}] + [e0|{4|3|6|5}] + [e0|{4|3|6|1}] + [e0|{4|3|2|2}] + [e0|{4|3|2|5}] + [e0|{4|3|2|4}] + [e0|{4|3|5|3}] + [e0|{4|3|5|2}] + [e0|{4|3|1|5}] + [e0|{4|3|4|2}] + [e0|{4|6|7|3}] + [e0|{4|6|7|1}] + [e0|{4|6|7|4}] + [e0|{4|6|3|7}] + [e0|{4|6|3|3}] + [e0|{4|6|3|2}] + [e0|{4|6|3|5}] + [e0|{4|6|3|4}] + [e0|{4|6|6|7}] + [e0|{4|6|2|7}] + [e0|{4|6|4|3}] + [e0|{4|2|7|2}] + [e0|{4|2|7|5}] + [e0|{4|2|7|1}] + [e0|{4|2|3|7}] + [e0|{4|2|3|3}] + [e0|{4|2|3|6}] + [e0|{4|2|3|4}] + [e0|{4|2|6|7}] + [e0|{4|2|6|3}] + [e0|{4|2|2|3}] + [e0|{4|4|3|3}] + [e0|{4|4|6|7}] + [e0|{4|4|6|3}] + [e12|{7|7|2}] + [e11|{7|7|5}] + [e12|{7|7|1}] + [e12|{7|3|7}] + [e11|{7|3|3}] + [e11|{7|3|4}] + [e11|{7|6|7}] + [e12|{7|6|7}] + [e12|{7|6|3}] + [e12|{7|6|6}] + [e11|{7|6|2}] + [e12|{7|6|5}] + [e12|{7|6|1}] + [e11|{7|6|4}] + [e11|{7|2|3}] + [e11|{7|2|6}] + [e11|{7|2|2}] + [e12|{7|2|2}] + [e11|{7|2|5}] + [e11|{7|2|1}] + [e12|{7|2|4}] + [e11|{7|5|3}] + [e12|{7|5|3}] + [e12|{7|5|6}] + [e11|{7|5|2}] + [e11|{7|1|7}] + [e12|{7|1|2}] + [e12|{7|1|5}] + [e12|{7|1|4}] + [e11|{7|4|3}] + [e11|{7|4|2}] + [e12|{7|4|2}] + [e11|{7|4|1}] + [e11|{7|4|4}] + [e11|{3|7|7}] + [e11|{3|7|3}] + [e11|{3|7|6}] + [e11|{3|7|2}] + [e12|{3|7|2}] + [e12|{3|7|1}] + [e11|{3|7|4}] + [e12|{3|7|4}] + [e11|{3|3|7}] + [e12|{3|3|7}] + [e11|{3|3|2}] + [e12|{3|3|2}] + [e11|{3|3|1}] + [e12|{3|6|7}] + [e12|{3|6|3}] + [e12|{3|6|6}] + [e12|{3|6|5}] + [e12|{3|6|1}] + [e11|{3|2|7}] + [e11|{3|2|6}] + [e12|{3|2|2}] + [e11|{3|2|5}] + [e11|{3|2|1}] + [e11|{3|2|4}] + [e12|{3|2|4}] + [e11|{3|5|7}] + [e12|{3|5|3}] + [e11|{3|5|1}] + [e11|{3|1|7}] + [e11|{3|1|6}] + [e12|{3|1|6}] + [e11|{3|1|2}] + [e11|{3|1|5}] + [e12|{3|1|5}] + [e11|{3|4|7}] + [e11|{3|4|3}] + [e11|{3|4|6}] + [e11|{3|4|2}] + [e12|{3|4|2}] + [e11|{3|4|1}] + [e12|{6|7|7}] + [e11|{6|7|3}] + [e12|{6|7|2}] + [e12|{6|7|5}] + [e11|{6|3|7}] + [e12|{6|3|6}] + [e11|{6|3|5}] + [e11|{6|3|4}] + [e12|{6|3|4}] + [e12|{6|6|7}] + [e12|{6|6|3}] + [e11|{6|6|2}] + [e11|{6|6|5}] + [e11|{6|2|7}] + [e12|{6|2|7}] + [e11|{6|2|3}] + [e11|{6|2|2}] + [e11|{6|2|5}] + [e12|{6|2|1}] + [e11|{6|5|7}] + [e12|{6|5|7}] + [e12|{6|5|3}] + [e11|{6|5|2}] + [e12|{6|5|2}] + [e11|{6|5|5}] + [e12|{6|5|1}] + [e11|{6|1|7}] + [e12|{6|1|7}] + [e12|{6|1|6}] + [e11|{6|1|2}] + [e11|{6|1|5}] + [e12|{6|1|5}] + [e11|{6|1|1}] + [e12|{6|1|1}] + [e11|{6|1|4}] + [e12|{6|1|4}] + [e12|{6|4|7}] + [e12|{6|4|3}] + [e11|{6|4|5}] + [e11|{6|4|1}] + [e12|{2|7|7}] + [e11|{2|7|3}] + [e11|{2|7|1}] + [e12|{2|7|1}] + [e11|{2|7|4}] + [e12|{2|3|7}] + [e11|{2|3|3}] + [e12|{2|3|3}] + [e11|{2|3|6}] + [e11|{2|3|5}] + [e11|{2|6|7}] + [e11|{2|6|3}] + [e11|{2|6|6}] + [e12|{2|6|5}] + [e11|{2|6|1}] + [e12|{2|6|1}] + [e11|{2|2|7}] + [e12|{2|2|7}] + [e12|{2|2|3}] + [e11|{2|2|6}] + [e11|{2|5|7}] + [e11|{2|1|3}] + [e11|{5|7|7}] + [e11|{5|7|3}] + [e11|{5|3|7}] + [e11|{5|3|2}] + [e11|{5|3|4}] + [e11|{5|6|7}] + [e11|{5|6|3}] + [e11|{5|6|2}] + [e11|{5|2|7}] + [e11|{5|2|3}] + [e11|{5|2|6}] + [e11|{5|5|3}] + [e11|{1|7|7}] + [e11|{1|7|6}] + [e11|{1|7|2}] + [e11|{1|3|7}] + [e11|{1|3|3}] + [e11|{1|6|2}] + [e11|{1|2|7}] + [e11|{1|2|3}] + [e11|{1|2|6}] + [e11|{1|1|7}] + [e12|{4|7|1}] + [e12|{4|3|7}] + [e12|{4|6|7}] + [e12|{4|6|3}] + [e21|{7|7}] + [e21|{7|6}] + [e21|{7|2}] + [e21|{7|1}] + [e22|{7|1}] + [e22|{7|4}] + [e22|{3|3}] + [e21|{3|6}] + [e22|{3|2}] + [e21|{3|5}] + [e22|{3|5}] + [e22|{3|4}] + [e22|{6|7}] + [e21|{6|3}] + [e22|{6|3}] + [e21|{6|5}] + [e21|{2|7}] + [e22|{2|3}] + [e21|{2|2}] + [e22|{2|2}] + [e3|{7}] + [e3|{3}]
$, 
\else
The outputs of the program indicates a solution $u'$ of (\ref{eq:Equation 2}) which is given as follows:
\begin{align*}
u'_{*} = &
[e^0|\{a^3b|a^3b|a^3b|a^{2}b\}]^{*} + [e^0|\{a^3b|a^3b|a^2b|a^2b\}]^{*} + [e^0|\{a^3b|a^3b|a^{2}b|a^2\}]^{*} 
\\&
+ [e^0|\{a^3b|a^3b|a^2|ab\}]^{*} + \cdots + [e^3|\{a^3b\}]^{*} + [e^{3}|\{a^{3}\}]^{*},
\end{align*}
\fi
where $u'_{*}$ denotes the dual of $u'$ in $C_{4}(S^3\times_{\ad{}}P^{4}G;\field_{2})$.

Thus (\ref{eq:Equation 1}) has one solution, and it completes the proof of Lemma \ref{lem:mainlem}. 

\begin{remark}
If we consider (\ref{eq:Equation 2}) in $C^{5}(S^3\times_{\ad{}}P^{5}G;\field_{2}) = C^{5}(S^3\times_{\ad{}}P^{\infty}G;\field_{2})$ instead of $C^{5}(S^3\times_{\ad{}}P^{4}G;\field_{2})$, we have no solution as the python program produces the following outputs.
\par\smallskip
{\rm\begin{lstlisting}[basicstyle=\ttfamily\footnotesize, frame=single, breaklines=true]
The Number of 4-cells is 3192.
The Number of 5-cells is 22344.
The size of the coefficients matrix delta is 22344 * 3192.
The size of the augmented coefficients matrix Delta is 22344 * 3193.
The rank of the matrix delta is 2789.
The rank of the matrix Delta is 2790.
\end{lstlisting}}
It should imply that $z \otimes x^{2}$ is non-zero in $H^{*}(S^3\times_{\ad{}}P^{5}G;\field_{2})$ and $\wgtB{z \otimes x^{2};\field_{2}}=5$.
\end{remark}

\ifdefined\reviseone
\section*{Acknowledgements}
The authors are very grateful to Takeshi Nanri for his kind advice to improve python program.
The computation was carried out using the computer resource offered under the category of Trial Use Projects by Research Institute for Information Technology (R{\hskip.075em}I\raise.75ex\hbox{\small$2$}{\hskip.1em}T), Kyushu University.
In fact, we obtained a direct solution for ${}^{t}T_{\!\delta}\,\mathbold{x} = {}^{t}T_{\!c}$, while it is too big to describe explicitly here.
\par\smallskip
\begin{lstlisting}[basicstyle=\ttfamily\footnotesize, frame=single, breaklines=true]
The Number of 5-cells is 22344.
The Number of 6-cells is 38759.
The size of the coefficients matrix delta is 38759x22344.
The size of the augmented coefficients matrix Delta is 38759x22345.
The rank of the matrix delta is 15724.
The rank of the matrix Delta is 15724.
The length of one particular solution is 5546.
\end{lstlisting}\vskip0ex
\par
The first named author is supported in part by Grant-in-Aid for Scientific Research (S) \#17H06128, by Exploratory Research \#18K18713 and by Grant-in-Aid for Scientific Research (C) \#23K03093 from Japan Society for the Promotion of Science.
\fi
%
%
\bibliographystyle{alpha}
\bibliography{topcomp}
\end{document}